\definecolor{nicegreen}{RGB}{0,180,0}
\newtheorem{thm}{Theorem}[section]
\newtheorem{cor}[thm]{Corollary}
\newtheorem{lemma}[thm]{Lemma}
\newtheorem{propn}[thm]{Proposition}
\newtheorem*{nothm}{Theorem}
\newtheorem*{nopropn}{Proposition}
\theoremstyle{remark}
\theoremstyle{definition}
\newtheorem{remark}[thm]{Remark}
\newtheorem{ex}[thm]{Example}
\newcommand{\qp}{\mathbb{Q}_p}
\newcommand{\fpb}{\overline{\mathbb{F}}_p}
\newcommand{\T}{\textnormal{T}}
\newcommand{\aff}{\textnormal{aff}}
\newcommand{\mm}{\mathfrak{m}}
\newcommand{\nn}{\mathfrak{n}}
\newcommand{\vv}{\mathfrak{v}}
\newcommand*{\longhookrightarrow}{\ensuremath{\lhook\joinrel\relbar\joinrel\rightarrow}}
\newcommand*{\longtwoheadrightarrow}{\ensuremath{\relbar\joinrel\twoheadrightarrow}}
\newcommand{\sub}[2]{\genfrac{}{}{0pt}{}{#1}{#2}}
\newcommand{\End}{\textnormal{End}}
\newcommand{\cind}{\textnormal{c-ind}}
\newcommand{\Hom}{\textnormal{Hom}}
\newcommand{\Ext}{\textnormal{Ext}}
\newcommand{\tor}{\textnormal{tor}}
\newcommand{\free}{\textnormal{free}}
\newcommand{\pd}{\textnormal{pd}}
\newcommand{\id}{\textnormal{id}}
\newcommand{\bld}{\pmb{\mathbf{1}}}
\newcommand{\cA}{{\mathcal{A}}}
\newcommand{\cB}{{\mathcal{B}}}
\newcommand{\cC}{{\mathcal{C}}}
\newcommand{\cF}{{\mathcal{F}}}
\newcommand{\cH}{{\mathcal{H}}}
\newcommand{\cP}{{\mathcal{P}}}
\newcommand{\cR}{{\mathcal{R}}}
\newcommand{\cY}{{\mathcal{Y}}}
\newcommand{\bbI}{{\mathbb{I}}}
\newcommand{\bbL}{{\mathbb{L}}}
\newcommand{\bbQ}{{\mathbb{Q}}}
\newcommand{\bbR}{{\mathbb{R}}}
\newcommand{\bbZ}{{\mathbb{Z}}}
\newcommand{\tW}{\widetilde{W}}
\newcommand{\tOmega}{\widetilde{\Omega}}
\newcommand{\tu}{\widetilde{u}}
\newcommand{\tw}{\widetilde{w}}
\begin{document}

\title{Homological dimension of simple pro-$p$-Iwahori--Hecke modules}
\author{Karol Kozio\l}
\address{Department of Mathematics \\ University of Toronto \\ Toronto, ON M5S 2E4, Canada} 
\email{karol@math.toronto.edu}

\begin{abstract}
Let $G$ be a split connected reductive group defined over a nonarchimedean local field of residual characteristic $p$, and let $\mathcal{H}$ be the pro-$p$-Iwahori--Hecke algebra over $\overline{\mathbb{F}}_p$ associated to a fixed choice of pro-$p$-Iwahori subgroup.  We explore projective resolutions of simple right $\mathcal{H}$-modules.  In particular, subject to a mild condition on $p$, we give a classification of simple right $\mathcal{H}$-modules of finite projective dimension, and consequently show that ``most'' simple modules have infinite projective dimension.  
\end{abstract}

\maketitle

\section{Introduction}

Let $p$ be a prime number.  The mod-$p$ representations of $p$-adic reductive groups have been the subject of intense recent study, culminating in the work of Abe--Henniart--Herzig--Vign\'eras \cite{ahhv}.  The aforementioned article classifies irreducible admissible representations of a $p$-adic reductive group $G$ in terms of supersingular representations, and it is expected that this classification will be useful in extending the mod-$p$ Local Langlands Correspondence beyond the case $G = \textnormal{GL}_2(\qp)$.

In a parallel vein, there has been substantial progress in recent years in understanding the category of modules over the \emph{pro-$p$-Iwahori--Hecke algebra} $\cH = \fpb[I(1)\backslash G/I(1)]$, where $I(1)$ is a fixed pro-$p$-Iwahori subgroup of $G$.  We refer the reader to \cite{vigneras:hecke1} for a description of this algebra, and to \cite{vigneras:hecke3}, \cite{ollivier:compat}, \cite{abe:heckemods} for a classification of its simple modules.  The interest in this algebra comes from its link with the category of smooth mod-$p$ representations of $G$, and thus with the mod-$p$ Local Langlands Program.  To wit, there exists an equivalence between the category of $\cH$-modules and the category of smooth mod-$p$ $G$-representations generated by their $I(1)$-invariant vectors, when $G = \textnormal{PGL}_2(\qp)$ or $\textnormal{SL}_2(\qp)$ (\cite{ollivier:foncteur}, \cite{koziol:glnsln}).  Moreover, if one replaces $\cH$ by a certain related differential graded Hecke algebra $\cH^\bullet$, one obtains an equivalence between the (unbounded) derived category of differential graded $\cH^\bullet$-modules and the (unbounded) derived category of $G$-representations (subject to some restrictions on $I(1)$, see \cite{schneider:dga}).

Since we have a good understanding of the structure of simple $\cH$-modules, it therefore becomes imperative to better understand their homological properties.  In the case when $G$ is split, a first step towards achieving this was taken by Ollivier and Schneider in \cite{ollivierschneider}, where a functorial resolution of any $\cH$-module was constructed by making use of coefficient systems on the Bruhat--Tits building of $G$.  The authors use this resolution to show, among other things, that the algebra $\cH$ has infinite global dimension (at least generically), and possesses a simple module of infinite projective dimension.

Our goal in this note will be to give a classification of those simple $\cH$-modules of finite projective dimension.  We now give an overview of the contents herein, and of our main result.  

\vspace{\baselineskip}

We assume henceforth that the group $G$ is defined and split over a fixed nonarchimedean local field $F$ of residual characteristic $p$.  After recalling the necessary notation in Section \ref{notation}, we investigate the algebras $\cH_{\cF}$ and $\cH_{\cF}^\dagger$, which are certain ``small'' subalgebras of $\cH$ associated to a facet $\cF$ in the semisimple Bruhat--Tits building of $G$.  The Ollivier--Schneider resolution is constructed from algebras of this form, and the projective dimensions of $\cH$-modules are controlled by their restrictions to $\cH_{\cF}$ and $\cH_{\cF}^\dagger$.  Thus, it will be important for our purposes to understand how to transfer homological properties of modules from one algebra to the other.  We take this up in Section \ref{prelim}.  Once this is complete, we recall more precisely in Section \ref{resnsandsss} the resolution of $\cH$-modules constructed in \cite{ollivierschneider}, and record a useful associated spectral sequence.

With the preliminaries in place, we review in Section \ref{simplemods} Abe's classification of simple $\cH$-modules in terms of parabolic coinduction and supersingular modules of Levi components (see \cite{abe:heckemods}).  Lemma \ref{cech} shows that every simple module admits a resolution by a certain \v{C}ech complex, with each term being a direct sum of parabolically coinduced representations.  Using the results already obtained, along with an analog of the Mackey formula for $\cH$-modules (Lemma \ref{mackey}), we obtain the following result (Proposition \ref{Anprop}):

\begin{nopropn}
Suppose the root system of $G$ is of type $A_n$, $p\nmid |\pi_1(G)_\tor|$ and $\mm$ is a simple subquotient of a ``principal series $\cH$-module'' (that is, a simple subquotient of the $\cH$-module $\textnormal{Ind}_B^G(\chi)^{I(1)}$, where $B$ is a Borel subgroup of $G$ and $\chi$ is a smooth character of $B$).  Then $\mm$ has finite projective dimension over $\cH$ which is bounded above by the rank of $G$.  
\end{nopropn}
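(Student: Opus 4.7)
The plan is to induct on the semisimple rank of $G$, using the \v{C}ech complex of Lemma \ref{cech} to reduce to proper Levi subgroups at each step.

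In the base case $G$ is a torus of rank $r$; the hypothesis $p\nmid|\pi_1(G)_\tor|$ is automatic here, and $\cH$ identifies with the $\fpb$-group algebra of the finitely generated abelian group $T(F)/I(1)_T$, whose torsion subgroup has order coprime to $p$. Simple $\cH$-modules are then characters, and a direct Koszul-type argument shows they have projective dimension equal to the rank of the free part, which is at most $r = \textnormal{rank}(G)$.

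For the inductive step, Abe's classification (Section \ref{simplemods}) realizes $\mm$ as coinduction data $(B,\chi,P)$ in which the ``supersingular'' input is the character $\chi$ of the torus. Applying Lemma \ref{cech} presents $\mm$ as the leftmost term of an exact complex
\[ 0 \to \mm \to C^0 \to C^1 \to \cdots \to C^{s} \to 0, \]
where $s$ is at most the semisimple rank of $G$ and each $C^i$ is a finite direct sum of parabolic coinductions $\textnormal{Ind}_{\cH_L}^{\cH}(\mm_L)$ from Hecke subalgebras attached to \emph{proper} standard Levi subgroups $L\subsetneq G$. Each $\mm_L$ is itself a principal-series simple subquotient for $L$, and since $L$ is a Levi of $G$ with strictly smaller semisimple rank, the inductive hypothesis gives $\pd_{\cH_L}(\mm_L) \leq \textnormal{rank}(L) = \textnormal{rank}(G)$.

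The central technical step is to transfer this bound across parabolic coinduction and conclude $\pd_{\cH}(C^i) \leq \textnormal{rank}(G)$ for each $i$. Here one computes the restriction of each $C^i$ to the small Hecke subalgebras $\cH_\cF$ and $\cH_\cF^\dagger$ using the Mackey formula of Lemma \ref{mackey}, then combines this with the Ollivier--Schneider resolution and its spectral sequence from Section \ref{resnsandsss} together with the transfer results of Section \ref{prelim} to conclude that projective dimension is not increased by parabolic coinduction. A standard long-exact-sequence argument applied to the \v{C}ech complex then yields $\pd_{\cH}(\mm) \leq \textnormal{rank}(G)$.

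The principal obstacle is this transfer step. The type $A_n$ hypothesis is essential because only in that case are the reductive quotients at each facet products of general linear groups over finite fields, whose parahoric Hecke algebras admit sufficiently concrete module theory to control the individual Mackey summands; the condition $p\nmid|\pi_1(G)_\tor|$ eliminates $p$-torsion obstructions in the extended affine Weyl group that would otherwise prevent a clean comparison between $\cH$ and its facet subalgebras.
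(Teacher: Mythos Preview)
Your inductive scheme does not close. In the \v{C}ech complex of Lemma \ref{cech}, the module $\mm = I(B,\psi,Q)$ sits on the \emph{right} as a quotient, not on the left as a kernel, and the intermediate terms are $I_{L'}(\psi^{e_{L'}})$ with $L'$ ranging over Levi subgroups satisfying $L \subset L' \subset M(\psi)$. These Levis are \emph{larger} than $L$, not smaller, and when $\psi\circ\alpha^\vee \equiv 1$ for all $\alpha\in\Pi$ one has $M(\psi) = G$; the term $L' = G$ then contributes $I_G(\psi^{e_G}) = \psi^{e_G}$, a simple $\cH$-module of the same type you are trying to handle, for the same group $G$. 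So no induction on semisimple rank is available. Even for the genuinely proper $L'$, your ``transfer step'' requires knowing that parabolic coinduction preserves finite projective dimension; the Mackey formula of Lemma \ref{mackey} computes the restriction only to $\cH_x$ for the \emph{fixed} hyperspecial vertex $x$, not to arbitrary $\cH_{\cF}$, so it does not by itself supply this.

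The paper's argument is not inductive. The point of type $A_n$ is that \emph{every} vertex of the building is hyperspecial. For each vertex $y\in\overline{C}$ one may re-express $\mm$ as $I(B',\psi',Q')$ where $B'$ is the Borel determined by $y$; Lemma \ref{psproj} (whose proof uses Lemma \ref{mackey} at $y$, together with the \v{C}ech complex restricted to $\cH_y$ and the fact that the trivial character of a finite $0$-Hecke algebra is injective) shows $\mm|_{\cH_y}$ is projective. Lemma \ref{restr} then propagates projectivity from vertices down to all facets $\cF\subset\overline{C}$, and Lemma \ref{redtofacets} together with Remark \ref{projinjdim} gives the bound. Your explanation of where type $A_n$ enters (``reductive quotients are products of $\textnormal{GL}$'') is not the operative mechanism; it is the hyperspeciality of all vertices that makes Lemma \ref{mackey} applicable everywhere.
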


Our next goal will be to generalize the above proposition to an arbitrary simple module $\mm$ and an arbitrary $G$.  In Section \ref{ssingmods}, we recall the classification of simple supersingular right $\cH$-modules due to Ollivier and Vign\'eras (\cite{ollivier:compat} and \cite{vigneras:hecke3}).  Note that there is no such classification for supersingular \emph{representations} of $G$.  Using several reductions, we completely determine when such a module has finite projective dimension in Theorem \ref{ssing}.  This result will be required as input into our main theorem.

By Abe's classification (cf. \cite{abe:heckemods}), every simple right $\cH$-module is a subquotient of the parabolic coinduction of a simple supersingular $\cH_M$-module, where $M$ is a Levi subgroup of $G$ and $\cH_M$ is the associated pro-$p$-Iwahori--Hecke algebra.  Thus, our next task in Section \ref{sectionparabind} is to understand how projective dimension behaves under parabolic coinduction and taking subquotients.  The parabolic coinduction functor has an exact left adjoint (cf. \cite{vigneras:hecke5}), and a straightforward argument with Ext spaces shows that finiteness of projective dimension is preserved under parabolic coinduction (Lemma \ref{parabind}).  Passing to subquotients is less straightforward, and occupies the remainder of the section.

With all the pieces in place, we obtain our main result:

\begin{nothm}
Suppose $p\nmid|\pi_1(G)_\tor|$, and let $M$ denote a standard Levi subgroup of $G$.  Let $\nn$ be a simple supersingular right $\cH_M$-module, and $\mm$ a simple subquotient of the parabolic coinduction of $\nn$ from $\cH_M$ to $\cH$.  Then the following are equivalent:
\begin{itemize}
\item $\mm$ has finite projective dimension over $\cH$;
\item $\nn$ has finite projective dimension over $\cH_M$; 
\item the root system of $M$ is of type $A_1\times \cdots \times A_1$, and the characteristic function of $(I(1)\cap M)\alpha^\vee(x)(I(1)\cap M)$ acts trivially on $\nn$ for all $x$ in the residue field of $F$ and all simple roots $\alpha$ of $M$.  
\end{itemize}
Moreover, when $G$ is semisimple and $\mm$ satisfies the above conditions, the resolution of Ollivier--Schneider is a projective resolution of $\mm$, and the projective dimension of $\mm$ is equal to the rank of $G$.  
\end{nothm}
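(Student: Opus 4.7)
The plan is to leverage Abe's classification, which reduces the structure of simple $\cH$-modules to that of supersingular modules over Levi subalgebras. The equivalence of the second and third bullets is then immediate from Theorem \ref{ssing} applied to the Levi $M$ in place of $G$: since $\nn$ is simple supersingular over $\cH_M$, and since the condition in the third bullet is precisely the criterion that Theorem \ref{ssing} provides for such a module to have finite projective dimension over $\cH_M$, the two conditions coincide.

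For the equivalence of the first and second bullets, I would argue in both directions. In the direction from the second to the first, Lemma \ref{parabind} immediately gives that the parabolic coinduction of $\nn$ has finite projective dimension over $\cH$ whenever $\nn$ does over $\cH_M$; the remaining task is to descend this property to the simple subquotient $\mm$, which is the subject of the subquotient analysis in the remainder of Section \ref{sectionparabind}. In the reverse direction, I would apply the exact left adjoint $L$ of parabolic coinduction: by the adjunction identity
\[
\textnormal{Ext}^i_{\cH_M}(L(\mm),-) \cong \textnormal{Ext}^i_{\cH}(\mm, \textnormal{Ind}(-)),
\]
$L(\mm)$ has finite projective dimension over $\cH_M$, and one can relate $\nn$ to $L(\mm)$ via Abe's parametrization of simple subquotients of $\textnormal{Ind}(\nn)$ to recover finiteness for $\nn$ itself.

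For the moreover statement, suppose $G$ is semisimple and $\mm$ satisfies the equivalent conditions. The Ollivier--Schneider resolution, recalled in Section \ref{resnsandsss}, has length equal to the semisimple rank of $G$ and is built from inductions of $\cH_{\cF}^\dagger$-modules as $\cF$ ranges over facets in a fundamental chamber of the building. Using the local-to-global homological transfer results of Section \ref{prelim}, together with the specific $A_1\times\cdots\times A_1$ structure forced by the third bullet, I would verify facet-by-facet that each term of the resolution evaluated at $\mm$ is projective over $\cH$, giving the upper bound $\pd(\mm)\leq\textnormal{rank}(G)$. The matching lower bound follows from a non-vanishing $\textnormal{Ext}^{\textnormal{rank}(G)}$ computation, obtained from the spectral sequence attached to the Ollivier--Schneider resolution recorded in Section \ref{resnsandsss}.

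The main obstacle will be the subquotient step in the direction from the second to the first bullet: finiteness of projective dimension is not in general inherited by subquotients, and transferring it from $\textnormal{Ind}(\nn)$ to $\mm$ requires fine control over the composition factors of parabolic coinductions of supersingular modules. This control is likely obtained by combining the \v{C}ech-type resolution of Lemma \ref{cech} with the Mackey formula of Lemma \ref{mackey}, which together allow one to isolate the contribution of $\mm$ inside $\textnormal{Ind}(\nn)$ and match its Ext groups against those of the ambient coinduced module.
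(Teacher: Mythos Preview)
Your handling of (2) $\Leftrightarrow$ (3) via Theorem \ref{ssing} matches the paper. For (2) $\Rightarrow$ (1), you correctly identify the \v{C}ech resolution of Lemma \ref{cech} as the main tool, but you are missing two ingredients and misidentifying a third. First, the paper begins by reducing to $P(\nn)=G$ via transitivity of coinduction (writing $I(P,\nn,Q)=I_{M(\nn)}(I_{\cH_{M(\nn)}}(P\cap M(\nn),\nn,Q\cap M(\nn)))$) together with Lemma \ref{parabind}; this forces the orthogonal decomposition $\Pi=\Pi_M\sqcup\Pi_2$, which is used throughout. Second, the \v{C}ech spectral sequence requires $\pd_\cH(I_{L'}(\nn^{e_{L'}}))<\infty$ for \emph{every} intermediate $L'$, and this needs Proposition \ref{extn}, which compares $\pd_{\cH_M}(\nn)$ with $\pd_{\cH_{L'}}(\nn^{e_{L'}})$ by a facet-matching argument. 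Third, the Mackey formula of Lemma \ref{mackey} is not used here at all; the paper explicitly remarks that it enters only the separate type-$A_n$ argument of Proposition \ref{Anprop}.

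The genuine gap is in (1) $\Rightarrow$ (2), which you treat as the easier direction. Your adjunction argument does give $\pd_{\cH_M}(L_M(\mm))<\infty$, but this is vacuous when $L_M(\mm)=0$. Since $L_M$ is exact and $L_M(I_M(\nn))=\nn$ is simple, exactly one Jordan--H\"older factor $I(P,\nn,Q')$ of $I_M(\nn)$ has nonzero image under $L_M$; for every other choice of $Q$ your argument yields no information about $\nn$. The paper takes a completely different route: after the reduction to $P(\nn)=G$, it exploits the orthogonality of $\Pi_M$ and $\Pi_2$ to show that for a suitably chosen facet $\cF\subset\overline{C}$ (matched via $j_M^-$ to a facet $\cF_M$ in the building of $M$), the restriction $I(P,\nn,Q)|_{\cH_\cF}$ is a direct sum of copies of $\nn|_{\cH_{M,\cF_M}}$. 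If $\pd_{\cH_M}(\nn)=\infty$, choose $\cF_M$ so that $\nn|_{\cH_{M,\cF_M}}$ is not projective; then $\mm|_{\cH_\cF}$ is not projective either, and Lemma \ref{redtofacets} finishes.

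For the ``moreover'' clause your sketch is close; the lower bound on $\pd_\cH(\mm)$ comes from citing Corollary 6.12 of \cite{ollivierschneider} rather than from a spectral-sequence computation.
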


\noindent (In fact, we can strengthen the final statement somewhat; see below.)

This theorem shows that ``most'' simple $\cH$-modules have infinite projective dimension, and in particular, that simple supersingular modules are generically of infinite projective dimension.  On the other hand, simple subquotients of ``principal series $\cH$-modules'' have finite projective dimension.

In the final section (Section \ref{comp}), we present some complementary results.  First, we show how the above theorem adapts to simple modules over the \emph{Iwahori--Hecke algebra} $\cH'$, defined with respect to an Iwahori subgroup containing $I(1)$.  More precisely, the analog of the result above goes through mostly unchanged, except that the last condition is replaced by the simpler condition ``the root system of $M$ is of type $A_1\times \cdots \times A_1$'' (see Subsection \ref{iwahori} for more details).

Next, we specialize in Subsection \ref{represns} to the case $G = \textnormal{PGL}_2(\qp)$ or $\textnormal{SL}_2(\qp)$ with $p > 2$.  Recall that in this case, we have an equivalence between the category of $\cH$-modules and the category of smooth $G$-representations generated by their $I(1)$-invariant vectors.  By using this equivalence of categories, we demonstrate how to construct projective resolutions for certain irreducible smooth $G$-representations in the aforementioned representation category.  However, these resolutions will not be projective in the entire category of smooth $G$-representations.

Finally, we examine in Subsection \ref{zcharsection} what can be said when an $\cH$-module has a central character.  We prove that if such a module has finite projective dimension, the resolution of Ollivier--Schneider actually gives a projective resolution in the full subcategory of $\cH$-modules with the given central character.  This implies, in particular, the following fact: if $p\nmid|\pi_1(G/Z)|$, where $Z$ denotes the connected center of $G$, and $\mm$ is a simple $\cH$-module which has finite projective dimension over $\cH$, then the projective dimension of $\mm$ is in fact equal to the rank of $G$ (without any semisimplicity hypotheses).

\bigskip

\noindent \textbf{Acknowledgements}.  I would like to thank Noriyuki Abe, Rachel Ollivier, and Marie-France Vign\'eras for several useful conversations and feedback.  I would also like to thank the anonymous referee for helpful comments.  During the preparation of this article, funding was provided by NSF grant DMS-1400779 and an EPDI fellowship.

\section{Notation}\label{notation}

\subsection{General notation}
Let $p, F$ and $G$ be as in the introduction, so that $G$ is a split connected reductive group over a fixed nonarchimedean local field $F$ of residual characteristic $p$.  We let $k_F$ denote the residue field of $F$, and $q$ its order.

We will abuse notation throughout and conflate algebraic groups with their groups of $F$-points.  Denote by $Z$ the connected center of $G$, and let $r_{\textnormal{ss}}$ and $r_Z$ denote the semisimple rank of $G$ and the rank of $Z$, respectively.  Fix a maximal split torus $T$ of $G$ and let 
$$\langle-,-\rangle: X^*(T)\times X_*(T)\longrightarrow \bbZ$$ 
denote the natural perfect pairing.  The group $T$ acts on the standard apartment $(X_*(T)/X_*(Z))\otimes_\bbZ \bbR$ of the semisimple Bruhat--Tits building of $G$ by translation via $\nu$, where $\nu:T\longrightarrow X_*(T)$ is the homomorphism defined by
$$\langle\chi,\nu(\lambda)\rangle = -\textnormal{val}(\chi(\lambda))$$
for all $\chi\in X^*(T)$ and $\lambda\in T$, and where $\textnormal{val}:F^\times\longrightarrow \bbZ$ is the normalized valuation.

In the standard apartment, we fix a chamber $C$ and a hyperspecial vertex $x$ such that $x\in \overline{C}$.  Given a facet $\cF$ in the semisimple Bruhat--Tits building, we let $\cP_{\cF}$ denote the parahoric subgroup associated to $\cF$, and let $\cP_{\cF}^\dagger$ denote the stabilizer of $\cF$ in $G$; we have $\cP_\cF\subset \cP_{\cF}^\dagger$.  We set $I := \cP_C$, an Iwahori subgroup, and let $I(1)$ denote the pro-$p$-Sylow subgroup of $I$.

We identify the root system $\Phi\subset X^*(T)$ with the set of affine roots which are 0 on $x$.  Denote by $\Phi^+\subset \Phi$ the subset of affine roots which are positive on $C$; we have $\Phi = \Phi^+ \sqcup -\Phi^+$.  We let $\Pi$ denote the basis of $\Phi$ defined by $\Phi^+$, and define $B = T\ltimes U$ to be the Borel subgroup containing $T$ defined by $\Phi^+$, where $U$ is the unipotent radical of $B$.   A \emph{standard parabolic subgroup} $P = M\ltimes N$ is any parabolic subgroup containing $B$.  It will be tacitly assumed that all Levi subgroups $M$ appearing are standard; that is, they are Levi factors of standard parabolic subgroups and contain $T$.

Given a standard Levi subgroup $M$, we let $\Pi_M$ (resp. $\Phi_M$, resp. $\Phi_M^+$) denote the simple roots (resp. root system, resp. positive roots) defined by $M$.  Reciprocally, given a subset $J\subset \Pi$, we let $M_J$ denote the standard Levi subgroup it defines.

\subsection{Weyl groups}
Denote by $W_0 := N_G(T)/T$ the Weyl group of $G$, with length function $\ell:W_0\longrightarrow \bbZ_{\geq 0}$ (defined with respect to $\Pi$).  For a standard Levi subgroup $M$, we let $W_{M,0}\subset W_0$ denote the corresponding Weyl group, and set 
$$W_0^M := \{w\in W_0: w(\Pi_M)\subset \Phi^+\}.$$  
Every element $w$ of $W_0$ can be written as $w = vu$ for unique $v\in W_0^M, u\in W_{M,0}$, satisfying $\ell(w) = \ell(v) + \ell(u)$ (see \cite[Ch. IV, Exercices du $\S$1, (3)]{bourbaki:lie}).

We set 
$$\Lambda := T/(T\cap I),\qquad \widetilde{\Lambda} := T/(T\cap I(1)),$$
$$W := N_G(T)/(T\cap I),\qquad\tW := N_G(T)/(T\cap I(1)).$$
Note that the map $\nu$ descends to $\Lambda$ and $\widetilde{\Lambda}$.  For any standard Levi subgroup $M$, we let $W_M$ denote the subgroup of $W$ generated by $W_{M,0}$ and $\Lambda$ (that is, $W_M$ is the preimage of $W_{M,0}$ under the surjection $W\longtwoheadrightarrow W_0$).

The group $(T\cap I)/(T\cap I(1))$ identifies with the group of $k_F$-points of $T$, and we denote this group by $T(k_F)$.  Given any subset $X$ of $W$, we let $\widetilde{X}$ denote its preimage in $\tW$ under the natural projection $\tW\longtwoheadrightarrow W$, so that $\widetilde{X}$ is an extension of $X$ by $T(k_F)$.  For typographical reasons we write $\widetilde{X}_\square$ as opposed to $\widetilde{X_\square}$ if the symbol $X$ has some decoration $\square$.  We will usually denote generic elements of $\tW$ by $\tw$, and given an element $w\in W$ we often let $\widehat{w}\in \tW$ denote a specified choice of lift.

Since $x$ is hyperspecial we have $W_0 \cong (N_G(T)\cap \cP_x)/(T\cap \cP_x)$, which gives a section to the surjection $W\longtwoheadrightarrow W_0$.  We will always view $W_0$ as a a subgroup of $W$ via this section.  This gives the decomposition
$$W\cong W_0\ltimes \Lambda.$$
In particular, we see that any $\tw\in \tW$ may be written as $\widehat{\overline{w}}\lambda$, where $\overline{w}$ is the image of $\tw$ in $W_0\subset W$, $\widehat{\overline{w}}\in \tW$ is a fixed choice of lift, and $\lambda\in \widetilde{\Lambda}$.  Moreover, the length function $\ell$ on $W_0$ extends to $W$ and $\tW$ (see \cite[Cor. 5.10]{vigneras:hecke1}).

We also have a decomposition 
$$W\cong W_\aff\rtimes\Omega.$$ 
Here, $W_\aff$ is the affine Weyl group, generated by the set $S$ of simple affine reflections fixing the walls of $C$ (chosen as in \cite[\S 4.3]{ollivierschneider}).  Every element of $S$ is of the form $s_\alpha$, where $s_\alpha$ is the reflection in the hyperplane defined by the kernel of an affine root $\alpha$.  Moreover, the pair $(W_\aff,S)$ is a Coxeter system, and the restriction of $\ell$ to $W_\aff$ agrees with the length function of $W_\aff$ as a Coxeter group.  The group $\Omega$ is the subgroup of elements stabilizing $C$; equivalently, $\Omega$ is the subgroup of length 0 elements of $W$.  It is a finitely generated abelian group, and we write 
$$\Omega \cong \Omega_{\tor}\times\Omega_{\free}$$ 
where $\Omega_{\tor}$ is the (finite) torsion subgroup and $\Omega_{\free}$ is the free part.  Since the group $Z/(Z\cap I)$ embeds as a finite-index subgroup of $\Omega_{\free}$, we have $\Omega_{\free}\cong \bbZ^{\oplus r_Z}$.

\subsection{Hecke algebras}
Let $\cH$ denote the pro-$p$-Iwahori--Hecke algebra of $G$ with respect to $I(1)$ over $\fpb$:
$$\cH:= \End_{G}\left(\cind_{I(1)}^G(\bld)\right),$$
where $\bld$ denotes the trivial $I(1)$-module over $\fpb$ (see \cite{vigneras:hecke1} for details).  For any standard Levi subgroup $M$, let $\cH_M$ denote the analogously defined pro-$p$-Iwahori--Hecke algebra of $M$ with respect to $I_M(1):=I(1)\cap M$ (which is \emph{not} a subalgebra of $\cH$ in general).  For any facet $\cF\subset \overline{C}$, we let 
$$\cH_{\cF} := \End_{\cP_{\cF}}\left(\cind_{I(1)}^{\cP_{\cF}}(\bld)\right),\qquad \cH_{\cF}^\dagger := \End_{\cP_{\cF}^\dagger}\left(\cind_{I(1)}^{\cP_{\cF}^\dagger}(\bld)\right);$$
extending functions on $\cP_\cF$ by zero to $G$ gives a $\cP_\cF$-equivariant injection 
$$\cind_{I(1)}^{\cP_{\cF}}(\bld)\longhookrightarrow \cind_{I(1)}^{G}(\bld),$$ 
which induces 
$$\cH_{\cF} = \End_{\cP_\cF}\left(\cind_{I(1)}^{\cP_{\cF}}(\bld)\right) \longhookrightarrow \Hom_{\cP_\cF}\left(\cind_{I(1)}^{\cP_{\cF}}(\bld),\cind_{I(1)}^{G}(\bld)\right) \cong \End_G\left(\cind_{I(1)}^{G}(\bld)\right) = \cH$$
(and similarly for $\cH_{\cF}^\dagger$).  We therefore view $\cH_{\cF}$ and $\cH_{\cF}^\dagger$ as subalgebras of $\cH$, and $\cH_{\cF}$ as a subalgebra of $\cH_{\cF}^\dagger$ (see \cite[\S\S 3.3.1 and 4.9]{ollivierschneider} for more details).

We view $\cH_M$ as the convolution algebra of $\fpb$-valued, $I_M(1)$-biinvariant functions on $M$.  The group $\tW_M$ gives a full set of coset representatives for $I_M(1)\backslash M/I_M(1)$, and for $\tw\in \tW_M$, we let $\T_{\tw}^M$ denote the characteristic function of $I_M(1)\tw I_M(1)$ (and drop the superscript when $M = G$).  For standard properties of the algebras $\cH_M$ (quadratic relations, Bernstein basis, definition of the elements $\T_{\tw}^{M,*}$, etc.), we defer to \cite{vigneras:hecke1}.  Let us only recall the braid relations: if $\tw,\tw'\in \tW_M$ satisfy $\ell_M(\tw\tw') = \ell_M(\tw) + \ell_M(\tw')$, where $\ell_M$ is the length function on $\tW_M$, then
$$\T_{\tw}^M\T_{\tw'}^M = \T_{\tw\tw'}^M.$$

\section{Preliminary results}\label{prelim}

We first record some simple results concerning the algebras $\cH_{\cF}$ and $\cH_{\cF}^\dagger$.  Given a right module $\mm$ over an associative unital ring $\cR$, we let $\pd_{\cR}(\mm)$ and $\id_{\cR}(\mm)$ denote the projective and injective dimensions of $\mm$ over $\cR$, respectively.

\begin{remark}\label{projinjdim}
Let $n\in \bbZ_{\geq	 0}$.  Recall that an associative unital (left and right) noetherian ring $\cR$ is called \emph{$n$-Gorenstein} if $\id_{\cR}(\cR) \leq n$, where $\cR$ is viewed as either a left or right $\cR$-module (\cite[Def. 9.1.9]{enochsjenda}).  Fix a facet $\cF\subset \overline{C}$.  By \cite[Thm. 3.14, Prop. 5.5, Remarks following Lem. 5.2]{ollivierschneider} the algebras $\cH_{\cF}$, $\cH_{\cF}^\dagger$, and $\cH$ are all $n$-Gorenstein, where $n = 0, r_Z,$ and $r_{\textnormal{ss}} + r_Z$, respectively.  We shall use the following fact several times (\cite[Thm. 9.1.10]{enochsjenda}): let $\cR\in \{\cH_{\cF}, \cH_{\cF}^\dagger, \cH\}$, so that $\cR$ is $n$-Gorenstein with $n$ as above, and let $\mm$ be a right module over $\cR$.  Then the following are equivalent:
\begin{itemize}
\item $\id_{\cR}(\mm) < \infty$;
\item $\pd_{\cR}(\mm) < \infty$;
\item $\id_{\cR}(\mm) \leq n$;
\item $\pd_{\cR}(\mm) \leq n$.
\end{itemize}
\end{remark}

\begin{remark}
We will employ the Eckmann--Shapiro lemma extensively, so we briefly recall it here (see \cite[Cor. 2.8.4]{benson:1} for more details).  Let $\cA$ and $\cB$ be two unital rings.  Suppose $\cA$ is a subring of $\cB$, and that $\cB$ is projective as a left (resp. right) $\cA$-module.  If $\mm$ is a right $\cB$-module and $\nn$ is a right $\cA$-module, we then have:
\begin{eqnarray*}
\bullet~ \Ext_{\cA}^i\left(\nn,\mm|_{\cA}\right) & \cong & \Ext_{\cB}^i\left(\nn\otimes_{\cA}\cB,\mm\right);\\
\bullet~ \Ext_{\cA}^i\left(\mm|_{\cA},\nn\right) & \cong & \Ext_{\cB}^i\left(\mm,\Hom_{\cA}(\cB,\nn)\right).
\end{eqnarray*}
\end{remark}

\begin{lemma}\label{restr}
Let $\cF, \cF'$ denote two facets such that $\cF'\subset\overline{\cF}$, $\cF\subset \overline{C}$, and let $\mm$ be a right $\cH_{\cF'}$-module.  If $\mm$ is projective, then the $\cH_{\cF}$-module $\mm|_{\cH_{\cF}}$ is projective.
\end{lemma}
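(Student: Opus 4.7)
The plan is to reduce the lemma to a freeness statement, namely that $\cH_{\cF'}$ is free as a right $\cH_\cF$-module. Once this is in hand, the lemma is immediate: a projective $\cH_{\cF'}$-module $\mm$ embeds as a direct summand of some free $\cH_{\cF'}^{(J)}$, and restriction to $\cH_\cF$ presents $\mm|_{\cH_\cF}$ as a direct summand of $(\cH_{\cF'}|_{\cH_\cF})^{(J)}$, which is a direct sum of free $\cH_\cF$-modules by the freeness claim, hence projective.

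To establish the freeness, I would work directly with the standard bases of these parahoric Hecke algebras. Since $\cF' \subset \overline{\cF}$ forces $\cF'$ to be a face of $\cF$, one has $\cP_\cF \subset \cP_{\cF'}$, and the inclusion $\cH_\cF \hookrightarrow \cH_{\cF'}$ recalled in Section \ref{notation} matches basis elements of the form $\T_{\tw}$ indexed by a certain finite subgroup $\tW_\cF \subset \tW_{\cF'}$ of $\tW$. These indexing groups are extensions of finite Coxeter subgroups of $W_\aff$ by the length-zero group $T(k_F)$, so the classical Bourbaki result recalled in Section \ref{notation} furnishes a set $X \subset \tW_{\cF'}$ of minimal-length left coset representatives for $\tW_{\cF'}/\tW_\cF$ such that every $\tw \in \tW_{\cF'}$ admits a unique length-additive factorization $\tw = x u$ with $x \in X$ and $u \in \tW_\cF$. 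Applying the braid relations then gives $\T_{\tw} = \T_x \T_u$, and so
\[
\cH_{\cF'} \;=\; \bigoplus_{x \in X} \T_x \cdot \cH_\cF
\]
as right $\cH_\cF$-modules, establishing freeness.

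The main technical point is checking that the Bourbaki coset decomposition lifts through the toric extension by $T(k_F)$ while preserving length-additivity; since $T(k_F)$ is normal in $\tW$ and consists of length-zero elements, this is essentially automatic, and in any case much of the required bookkeeping has already been carried out in \cite{ollivierschneider}, Sections 3--4, from which the freeness statement can be extracted directly rather than reproved from scratch.
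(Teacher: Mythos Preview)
Your proposal is correct and follows essentially the same route as the paper: both arguments establish that $\cH_{\cF'}$ is free as a right $\cH_{\cF}$-module by using the length-additive coset decomposition $\tW_{\cF'} = \widehat{W^{\cF}}\cdot\tW_{\cF}$ coming from the parabolic subgroup structure, together with the braid relations. The only cosmetic difference is that the paper deduces projectivity of $\mm|_{\cH_{\cF}}$ via the Eckmann--Shapiro lemma, whereas you use the direct-summand characterization of projectives; these are equivalent here.
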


\begin{proof}
Given $\cF'\subset\overline{\cF}$, we let $W_\cF\subset W_{\cF'}$ denote the subgroups of $W_\aff$ generated by those elements of $S$ which fix pointwise each respective facet (see \cite[\S 4.3]{ollivierschneider} for more details).  The group $W_{\cF}$ identifies with a parabolic subgroup of $W_{\cF'}$, and we let $W^{\cF}$ denote the set of minimal coset representatives of $W_{\cF'}/W_{\cF}$ (cf. definition of $W_0^M$).  For each $v\in  W^\cF\subset W_{\cF'}$, we fix a lift $\widehat{v}\in \tW_{\cF'}$.  Then any element $\tw\in \tW_{\cF'}$ may be written uniquely as $\tw = \widehat{v}\tu$, with $v\in W^{\cF}, \tu\in \tW_\cF$, satisfying $\ell(\tw) = \ell(\widehat{v}) + \ell(\tu)$.

By the comments preceding \cite[Lem. 4.20]{ollivierschneider}, the algebra $\cH_{\cF}$ has a basis given by $\{\T_{\tw}\}_{\tw\in \tW_{\cF}}$ (and similarly for $\cH_{\cF'}$).  Using the above remarks, we see that $\cH_{\cF}$ identifies with a parabolic subalgebra of $\cH_{\cF'}$, and $\cH_{\cF'}$ is free as a right $\cH_{\cF}$-module, with basis $\{\T_{\widehat{v}}\}_{v\in W^{\cF}}$.  Therefore, by the Eckmann--Shapiro Lemma, 
$$\Ext_{\cH_{\cF}}^i\left(\mm|_{\cH_{\cF}},\nn\right) \cong \Ext_{\cH_{\cF'}}^i\left(\mm,\Hom_{\cH_{\cF}}(\cH_{\cF'},\nn)\right) = 0~\textnormal{for}~i > 0.$$
\end{proof}

\begin{lemma}\label{daggerproj}
Assume that $p\nmid|\Omega_{\tor}|$.  Let $\cF$ denote a facet such that $\cF\subset \overline{C}$, and let $\mm$ be a right $\cH_{\cF}^\dagger$-module.  Then $\mm|_{\cH_{\cF}}$ is projective over $\cH_{\cF}$ if and only if $\pd_{\cH_{\cF}^\dagger}(\mm)\leq r_Z$.
\end{lemma}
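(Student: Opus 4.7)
The plan is to exploit two structural features together: by Remark \ref{projinjdim}, $\cH_{\cF}$ is $0$-Gorenstein while $\cH_{\cF}^\dagger$ is $r_Z$-Gorenstein; and $\cH_{\cF}^\dagger$ is free as a (left and right) $\cH_{\cF}$-module, with basis arising from length-$0$ lifts of the quotient $\Omega_{\cF}:=\cP_{\cF}^\dagger/\cP_{\cF}$ (which identifies with a subgroup of $\Omega$). Granting these, the implication ``$\pd_{\cH_{\cF}^\dagger}(\mm)\le r_Z$ implies $\mm|_{\cH_{\cF}}$ projective'' is immediate: restrict a finite projective resolution of $\mm$ to $\cH_{\cF}$; freeness of $\cH_{\cF}^\dagger$ over $\cH_{\cF}$ makes restriction preserve projectivity, giving $\pd_{\cH_{\cF}}(\mm|_{\cH_{\cF}})<\infty$, after which $0$-Gorenstein-ness of $\cH_{\cF}$ forces $\mm|_{\cH_{\cF}}$ to be projective by Remark \ref{projinjdim}.

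For the converse, the plan is to present $\cH_{\cF}^\dagger$ as a crossed product of $\cH_{\cF}$ with $\tOmega_{\cF}$, the preimage of $\Omega_{\cF}$ in $\tW$. The length-$0$ braid relation $\T_\tw\T_{\tw'}=\T_{\tw\tw'}$ shows that $\tw\mapsto \T_{\tw}$ embeds $\fpb[\tOmega_{\cF}]$ as a subalgebra of $\cH_{\cF}^\dagger$ consisting of units, and conjugation by these length-$0$ units gives an action of $\tOmega_{\cF}$ on $\cH_{\cF}$. From this one obtains
\[
\Hom_{\cH_{\cF}^\dagger}(\mm,\nn) \;=\; \Hom_{\cH_{\cF}}(\mm,\nn)^{\tOmega_{\cF}},
\]
which factors $\Hom_{\cH_{\cF}^\dagger}(\mm,-)$ as $(-)^{\tOmega_{\cF}}\circ\Hom_{\cH_{\cF}}(\mm,-)$. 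Since $\mm|_{\cH_{\cF}}$ is projective, the inner functor is exact, and the Grothendieck composition spectral sequence
\[
E_2^{p,q} = H^p\bigl(\tOmega_{\cF},\,\Ext^q_{\cH_{\cF}}(\mm,\nn)\bigr) \;\Longrightarrow\; \Ext^{p+q}_{\cH_{\cF}^\dagger}(\mm,\nn)
\]
degenerates to its $q=0$ row.

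To conclude, I would bound the $\fpb$-cohomological dimension of $\tOmega_{\cF}$. Its torsion subgroup is an extension of a subgroup of $\Omega_{\tor}$ by $T(k_F)$, both of orders prime to $p$ (the former by hypothesis, the latter automatically, since $|T(k_F)|=(q-1)^{\dim T}$); its free quotient is isomorphic to $\bbZ^{r_Z}$, since the finite-index embedding $Z/(Z\cap I)\hookrightarrow\Omega_{\free}\cong\bbZ^{r_Z}$ in fact lands in $\Omega_{\cF}$ (as $Z$ acts trivially on the semisimple Bruhat--Tits building). A standard Lyndon--Hochschild--Serre argument then yields $H^i(\tOmega_{\cF},-)=0$ for $i>r_Z$, so $\Ext^i_{\cH_{\cF}^\dagger}(\mm,\nn)=0$ for $i>r_Z$, and therefore $\pd_{\cH_{\cF}^\dagger}(\mm)\le r_Z$.

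The main obstacle is pinning down the crossed-product description of $\cH_{\cF}^\dagger$ over $\cH_{\cF}$ and the ensuing identification of $\cH_{\cF}^\dagger$-linear maps with $\tOmega_{\cF}$-invariant $\cH_{\cF}$-linear maps; this amounts to bookkeeping with the $\T$-basis, the normalization of $\cH_{\cF}$ by length-$0$ elements, and the compatibility of bases under $\tW_{\cF}\hookrightarrow\tW_{\cF}^\dagger\twoheadrightarrow \Omega_{\cF}$, all of which are close to facts already recorded in \cite{ollivierschneider}. Once in place, the Grothendieck spectral sequence and the cohomological-dimension estimate for $\tOmega_{\cF}$ are formal.
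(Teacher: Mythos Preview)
Your argument is correct, and the easy direction (restricting a finite projective resolution along the free extension $\cH_{\cF}\subset\cH_{\cF}^\dagger$ and invoking the $0$-Gorenstein property) is exactly what the paper does.

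For the harder direction the paper takes a slightly different route. Rather than setting up a crossed-product spectral sequence for the whole group $\tOmega_{\cF}$ at once, the paper splits $\Omega_{\cF}\cong\Omega_{\cF,\tor}\times\Omega_{\cF,\free}$ and argues in two ring-theoretic steps: the intermediate algebra $\cH_{\cF,\free}$ generated by $\cH_{\cF}$ and $\{\T_{\widetilde{\omega}}\}_{\widetilde{\omega}\in\tOmega_{\cF,\free}}$ is an iterated skew Laurent polynomial ring over $\cH_{\cF}$, so \cite[Prop.~7.5.2(ii)]{mcconnellrobson} gives $\pd_{\cH_{\cF,\free}}(\mm)\le r_Z$; then $\cH_{\cF}^\dagger\cong\cH_{\cF,\free}*\Omega_{\cF,\tor}$ is a crossed product by a finite group of order prime to $p$, and \cite[Thm.~7.5.6(ii)]{mcconnellrobson} gives $\pd_{\cH_{\cF}^\dagger}(\mm)=\pd_{\cH_{\cF,\free}}(\mm)$. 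Your approach packages both steps into the single Lyndon--Hochschild--Serre estimate for $\tOmega_{\cF}$, which is arguably cleaner; the paper's version has the advantage that it can simply cite off-the-shelf ring-theory lemmas. One small point worth tightening in your write-up: invoking the Grothendieck composition spectral sequence requires knowing that $\Hom_{\cH_{\cF}}(\mm,I)$ is $\tOmega_{\cF}$-acyclic for injective $\cH_{\cF}^\dagger$-modules $I$ (equivalently, that this is the spectral sequence of a strongly $\Omega_{\cF}$-graded ring); this is true and standard, but it is the one place where a sentence of justification or a reference would help, since the exactness of the inner functor alone does not by itself produce the spectral sequence.
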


\begin{proof}
Let $\Omega_{\cF}$ denote the subgroup of $\Omega$ stabilizing $\cF$, and write $\Omega_{\cF}\cong \Omega_{\cF,\tor}\times \Omega_{\cF,\free}$, where $\Omega_{\cF,\tor}$ is finite and $\Omega_{\cF,\free}$ is free of rank $r_Z$ (note that $Z/(Z\cap I)$ embeds as a finite-index subgroup of $\Omega_{\cF,\free}$).  The algebra $\cH_{\cF}^\dagger$ is generated by $\cH_{\cF}$ and $\{\T_{\widetilde{\omega}}\}_{\widetilde{\omega}\in \tOmega_{\cF}}$ (\cite[Lem. 4.20]{ollivierschneider}), and we let $\cH_{\cF,\free}$ denote the subalgebra of $\cH_{\cF}^\dagger$ generated by $\cH_{\cF}$ and $\{\T_{\widetilde{\omega}}\}_{\widetilde{\omega}\in \tOmega_{\cF,\free}}$.  We fix a set of generators $\{\omega_i\}_{i = 1}^{r_Z}$ for $\Omega_{\cF,\free}$, and let $\{\widehat{\omega}_i\}_{i = 1}^{r_Z}\subset \tOmega_{\cF,\free}$ denote a fixed set of lifts.  Using the braid relations, we see that $\cH_{\cF,\free}$ is free over $\cH_{\cF}$ with basis $\{\T_{\widehat{\omega}_1^{\ell_1}\cdots\widehat{\omega}_{r_Z}^{\ell_{r_Z}}}\}_{\ell_i\in \bbZ}$.  Moreover, this gives $\cH_{\cF,\free}$ the structure of a(n iterated) skew Laurent polynomial algebra over $\cH_{\cF}$ (see \cite[\S\S 1.2 and 1.4.3]{mcconnellrobson} for the relevant definition).  Therefore, if $\mm|_{\cH_{\cF}}$ is projective, \cite[Prop. 7.5.2(ii)]{mcconnellrobson} gives
$$\pd_{\cH_{\cF,\free}}(\mm|_{\cH_{\cF,\free}})\leq \pd_{\cH_{\cF}}(\mm|_{\cH_{\cF}}) + r_Z = r_Z.$$

Now, fix a set of lifts $\{\widehat{\omega}\}_{\omega\in \Omega_{\cF,\tor}} \subset \tOmega_{\cF,\tor}$ containing 1.  Once again using the braid relations, we see that $\cH_{\cF}^\dagger$ is free over $\cH_{\cF,\free}$, with basis given by the elements $\{\T_{\widehat{\omega}}\}_{\omega\in \Omega_{\cF,\tor}}$.  This gives $\cH_{\cF}^\dagger$ the structure of a crossed product algebra: $\cH_{\cF}^\dagger \cong \cH_{\cF,\free}*\Omega_{\cF,\tor}$ (see \cite[\S 1.5.8]{mcconnellrobson}).  Since $p\nmid |\Omega_{\cF,\tor}|$ by assumption, \cite[Thm. 7.5.6(ii)]{mcconnellrobson} implies
$$\pd_{\cH_{\cF}^\dagger}(\mm) = \pd_{\cH_{\cF,\free}}(\mm|_{\cH_{\cF,\free}})\leq r_Z.$$

To prove the converse, recall that $\cH_{\cF}^\dagger$ is free over $\cH_{\cF}$, and therefore any projective resolution of $\mm$ restricts to a projective resolution of $\mm|_{\cH_{\cF}}$.  Hence, if $\pd_{\cH_{\cF}^\dagger}(\mm)\leq r_Z$, we then obtain $\pd_{\cH_{\cF}}(\mm|_{\cH_{\cF}})\leq r_Z < \infty$, and $\mm|_{\cH_{\cF}}$ must be projective by Remark \ref{projinjdim}.
\end{proof}

\begin{lemma}\label{twist}
Let $\cF$ denote a facet such that $\cF\subset \overline{C}$, let $\mm$ be a right $\cH_{\cF}^\dagger$-module, and let $n\in \bbZ_{\geq 0}$.  Then $\pd_{\cH_{\cF}^\dagger}(\mm)\leq n$ if and only if $\pd_{\cH_{\cF}^\dagger}(\mm(\epsilon_{\cF}))\leq n$, where $\epsilon_{\cF}$ denotes the orientation character of $\cP_{\cF}^\dagger$ (see \cite[\S\S 3.1 and 3.3.1]{ollivierschneider}).  
\end{lemma}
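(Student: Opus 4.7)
The plan is to show that the operation $\mm\longmapsto \mm(\epsilon_{\cF})$ is an exact autoequivalence of the category of right $\cH_{\cF}^\dagger$-modules, from which the result is immediate: an exact functor with an exact inverse sends projective resolutions to projective resolutions, hence preserves projective dimension.

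To exhibit this autoequivalence, I would realize it as restriction of scalars along an algebra automorphism of $\cH_{\cF}^\dagger$. Since the orientation character $\epsilon_{\cF}:\cP_{\cF}^\dagger\longrightarrow \fpb^\times$ takes values in a group of order prime to $p$, while $I(1)$ is pro-$p$, it is trivial on $I(1)$ and therefore constant on each $I(1)$-double coset in $\cP_{\cF}^\dagger$. This allows one to define an $\fpb$-linear endomorphism $\sigma_{\epsilon_{\cF}}$ of $\cH_{\cF}^\dagger$ on the standard basis by
$$\sigma_{\epsilon_{\cF}}(\T_{\tw}) := \epsilon_{\cF}(\tw)\,\T_{\tw}.$$
Realizing $\cH_{\cF}^\dagger$ as the convolution algebra of $I(1)$-biinvariant $\fpb$-valued functions on $\cP_{\cF}^\dagger$, $\sigma_{\epsilon_{\cF}}$ is simply pointwise multiplication by $\epsilon_{\cF}$.

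The key technical point is to check that $\sigma_{\epsilon_{\cF}}$ is an algebra homomorphism. This follows from $\epsilon_{\cF}$ being a group character, since the convolution formula on $\cH_{\cF}^\dagger$ is visibly preserved under pointwise multiplication by a group homomorphism $\cP_{\cF}^\dagger\longrightarrow \fpb^\times$. Invertibility is then automatic: the inverse is the analogous twist by $\epsilon_{\cF}^{-1}$. The functor $\mm\longmapsto \mm(\epsilon_{\cF})$ is nothing other than restriction of scalars along $\sigma_{\epsilon_{\cF}}$, hence is an exact autoequivalence preserving projective modules, and therefore preserves projective dimension.

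The principal (and essentially only) obstacle is the verification of the multiplicativity of $\sigma_{\epsilon_{\cF}}$; this is a short direct computation from the definition of convolution and the multiplicativity of $\epsilon_{\cF}$, so nothing substantial stands in the way. The same argument in fact shows that twisting by \emph{any} $\fpb^\times$-valued character of $\cP_{\cF}^\dagger$ preserves projective dimension of $\cH_{\cF}^\dagger$-modules.
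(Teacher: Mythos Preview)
Your proposal is correct and follows essentially the same approach as the paper: the paper's proof is the one-line observation that $\mm\longmapsto \mm(\epsilon_{\cF})$ is exact and preserves projectives, and you have simply unpacked why this is so by exhibiting the twist as restriction along an explicit algebra automorphism of $\cH_{\cF}^\dagger$. The extra detail you provide (triviality of $\epsilon_{\cF}$ on $I(1)$, multiplicativity of the pointwise twist on the convolution algebra) is a correct elaboration of what the paper leaves implicit.
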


\begin{proof}
This follows from the fact that the functor $\mm\longmapsto \mm(\epsilon_{\cF})$ is exact on the category of $\cH_{\cF}^\dagger$-modules, and preserves projectives.  
\end{proof}

The following result will be used in a subsequent section.  We use notation and terminology from \cite[\S 4.1]{abe:heckemods}.  For a standard Levi subgroup $M$, we let $\cH_M^-$ denote the subalgebra of $\cH_M$ consisting of functions supported on $M$-negative elements.  Recall that, if $w\in W_{M,0}\subset W_M$ has a fixed lift $\widehat{w}\in \tW_M$ and $\lambda\in \widetilde{\Lambda}$, the element $\lambda\widehat{w}\in \tW_M$ is \emph{$M$-negative} if $\langle\alpha,\nu(\lambda)\rangle \geq 0$ for every $\alpha\in \Phi^+\smallsetminus\Phi_M^+$.  By Lemma 4.6 of \emph{loc. cit.}, we have an injective algebra morphism 
\begin{eqnarray*}
j_M^-:\cH_M^- & \longhookrightarrow & \cH\\
\T_{\tw}^{M,*} & \longmapsto & \T_{\tw}^*
\end{eqnarray*}
for $M$-negative $\tw\in \tW_M$, and we view $\cH$ as a right $\cH_M^-$-module via $j_M^-$.  Note that, while $\cH_M$ depends only on the Levi subgroup $M$, $\cH_M^-$ and $j_M^-$ depend on the choice of positive roots, and therefore on the choice of hyperspecial vertex $x\in \overline{C}$.  

\begin{lemma}[Mackey formula]\label{mackey}
Let $M$ be a standard Levi subgroup, and $\nn$ a right $\cH_M$-module.  We then have an isomorphism of right $\cH_x$-modules
\begin{eqnarray*}
\textnormal{Hom}_{\cH_M^-}(\cH,\nn)|_{\cH_x} & \stackrel{\sim}{\longrightarrow} & \textnormal{Hom}_{\cH_M^-\cap\cH_x}(\cH_x,\nn|_{\cH_M^-\cap \cH_x})\\
\varphi & \longmapsto & \varphi|_{\cH_x}.
\end{eqnarray*}
\end{lemma}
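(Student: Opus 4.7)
The plan is to reduce the Mackey isomorphism to the existence of a compatible free decomposition. Concretely, the aim is to exhibit elements $\{\xi_v\}_{v\in W_0^M} \subset \cH_x$ that simultaneously form a basis of $\cH$ as a right $j_M^-(\cH_M^-)$-module and a basis of $\cH_x$ as a right $(\cH_M^-\cap \cH_x)$-module. Given such a basis, both $\Hom$-spaces are identified via evaluation with $\prod_{v\in W_0^M}\nn$, and since the $\xi_v$ already lie in $\cH_x$, the restriction map corresponds to the identity on this product, hence is a bijection. Equivalently, one may phrase the key fact as a bimodule isomorphism
$$\cH \;\cong\; \cH_x\otimes_{\cH_M^-\cap\cH_x}\cH_M^-$$
and then deduce the Mackey formula from the Hom-tensor adjunction.

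To set up the compatible decomposition, the first step is to identify $\cH_M^-\cap\cH_x$. A function in $\cH_M^-$ is supported on $I(1)\cdot\tw\cdot I(1)$ for $M$-negative $\tw\in\tW_M$, and such a double coset lies in $\cP_x$ exactly when $\nu(\tw)=0$, i.e.\ $\tw\in\tW_{M,0}$. Thus $\cH_M^-\cap\cH_x$ is the finite Hecke algebra of $M$ at (the image of) $x$, spanned by $\{\T_{\tw}^*\}_{\tw\in\tW_{M,0}}$. Next, using the Bruhat-type factorization $W_0 = W_0^M\cdot W_{M,0}$ with unique factorization and additive lengths, fixing lifts $\widehat{v}\in\tW_0$ of $v\in W_0^M$, the braid relations give a free decomposition
$$\cH_x \;=\; \bigoplus_{v\in W_0^M} \T_{\widehat{v}}\cdot(\cH_M^-\cap\cH_x).$$

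The second, deeper, ingredient is the analogous decomposition for $\cH$ over $j_M^-(\cH_M^-)$: one needs
$$\cH \;=\; \bigoplus_{v\in W_0^M} \xi_v\cdot j_M^-(\cH_M^-),$$
where $\xi_v$ can be taken to be either $\T_{\widehat{v}}$ or the starred variant $\T_{\widehat{v}}^*$, with $\xi_v\in\cH_x$. This follows from the Bernstein-type presentation of $\cH$ relative to $\cH_M^-$ developed in \cite{abe:heckemods} (around Lemma 4.6 and the subsequent analysis of $j_M^-$). With both decompositions in hand, the proof is concluded by noting that the same elements $\xi_v$ serve as a basis on both sides, so that any right $(\cH_M^-\cap\cH_x)$-linear map $\cH_x\to\nn$ extends uniquely by $\xi_v\cdot j_M^-(h)\mapsto \psi(\xi_v)\cdot h$ to a right $\cH_M^-$-linear map $\cH\to\nn$ restricting to $\psi$, giving surjectivity, while injectivity is immediate from $\cH = \cH_x\cdot j_M^-(\cH_M^-)$.

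The main obstacle is guaranteeing that the decomposition of $\cH$ over $j_M^-(\cH_M^-)$ can be chosen with basis elements lying inside $\cH_x$, with the same indexing set $W_0^M$ used for the decomposition of $\cH_x$. This is essentially a compatibility between the Iwahori--Matsumoto basis governing $\cH_x$ and the Bernstein-type factorization governing $\cH$ as an $\cH_M^-$-module; tracking the relation between $\T_{\widehat{v}}$ and $\T_{\widehat{v}}^*$ (which differ by lower-length terms) and verifying that the induced factorization is honest must be done carefully, but the needed input is available in \cite{abe:heckemods}.
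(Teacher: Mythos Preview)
Your key structural claim, that $\cH = \bigoplus_{v\in W_0^M}\xi_v\cdot j_M^-(\cH_M^-)$ as a right module with $\xi_v\in\cH_x$, is false. The subalgebra $\cH_M^-$ is supported only on $M$-\emph{negative} elements of $\tW_M$, so $j_M^-(\cH_M^-)$ is far too small for $\cH$ to be free of rank $|W_0^M|$ over it. Already for $G$ of semisimple rank~$1$ and $M=T$ a length count shows that the span of $\cH_x\cdot j_T^-(\cH_T^-)$ inside $\cH$ grows half as fast as $\cH$ itself: the products $\T_{\widehat v}\T_\lambda^*$ with $\ell(\widehat v)\leq 1$ and $\ell(\lambda)\leq n$ span a space of dimension at most $n+O(1)$, whereas $\dim\cH_{\ell\leq n}=2n+O(1)$. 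The asserted bimodule isomorphism $\cH\cong\cH_x\otimes_{\cH_M^-\cap\cH_x}\cH_M^-$ fails for the same reason, and nothing in \cite{abe:heckemods} around Lemma~4.6 provides such a decomposition.

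The paper's proof sidesteps this by never claiming freeness of $\cH$ over $j_M^-(\cH_M^-)$. It invokes Abe's Lemma~4.10 directly: for $\nn$ an $\cH_M$-module (not merely an $\cH_M^-$-module), the evaluation map
\[
\Hom_{\cH_M^-}(\cH,\nn)\longrightarrow\bigoplus_{v\in W_0^M}\nn,\qquad \varphi\longmapsto(\varphi(\T_{\widehat{v}}))_{v},
\]
is a linear isomorphism. The full $\cH_M$-structure on $\nn$ is essential here --- the central element one inverts to pass from $\cH_M^-$ to $\cH_M$ already acts invertibly on $\nn$, and this is what lets one propagate the values of $\varphi$ beyond the negative cone on the \emph{target} side rather than on the algebra side. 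The paper then places this isomorphism and the (genuine) evaluation isomorphism for $\cH_x$ over $\cH_M^-\cap\cH_x$ into a commutative square and reads off that restriction is bijective. Your outline is salvageable if you replace the freeness assertion by a direct appeal to Abe's lemma, but as written the central step does not hold.
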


\begin{proof}
The restriction map is clearly well-defined and $\cH_x$-equivariant.  It therefore suffices to check that it is an isomorphism of vector spaces.

Recall that the choice of hyperspecial vertex $x$ gives an identification $W_0 \cong W_x$, where $W_x$ denotes the subgroup of $W_\aff$ generated by those elements of $S$ which fix $x$.  For each $v\in W_0^M$, we fix a lift $\widehat{v}\in\tW_0^M$.  Then, given any element $\tw\in \tW_0 \cong \tW_x$, we may write $\tw = \widehat{v}\tu$ for unique $v\in W_0^M, \tu\in \tW_{M,0}$, satisfying $\ell(\tw) = \ell(\widehat{v}) + \ell(\tu)$.  One sees easily that $\{\T_{\tu}\}_{\tu\in \tW_{M,0}}$ gives a basis for $\cH_M^- \cap \cH_x$ and $\{\T_{\tw}\}_{\tw\in \tW_0}$ gives a basis for $\cH_x$, so that $\cH_M^-\cap \cH_x$ identifies with a parabolic subalgebra of $\cH_x$.  Therefore, the braid relations and the factorization $\tw = \widehat{v}\tu$ imply that $\cH_x$ is free as a right $\cH_M^-\cap \cH_x$-module, with basis $\{\T_{\widehat{v}}\}_{v\in W_0^M}$, and the map
\begin{eqnarray*}
\Hom_{\cH_M^-\cap \cH_x}(\cH_x,\nn|_{\cH_M^-\cap \cH_x}) & \longrightarrow & \bigoplus_{v\in W_0^M} \nn\\
 \psi & \longmapsto & (\psi(\T_{\widehat{v}}))_{v\in W_0^M}
\end{eqnarray*}
is an isomorphism of vector spaces.

Consider now the diagram of vector spaces
\begin{center}
\begin{tikzcd}
\textnormal{Hom}_{\cH_M^-}(\cH,\nn)|_{\cH_x} \ar[r]  \ar[d ]& \textnormal{Hom}_{\cH_M^-\cap\cH_x}(\cH_x,\nn|_{\cH_M^-\cap \cH_x}) \ar[d,"{\rotatebox{90}{$\sim$}}"]\\
\displaystyle{ \bigoplus_{v\in W_0^M}\nn }&  \displaystyle{ \bigoplus_{v\in W_0^M}\nn }
\end{tikzcd}
\end{center}
where the right vertical map is the map of the previous paragraph, and the left vertical map is given by $\varphi \longmapsto (\varphi(\T_{\widehat{v}}))_{v\in W_0^M}$.  By \cite[Lem. 4.10]{abe:heckemods}, the left vertical map is also an isomorphism of vector spaces, and composing its inverse with the horizontal restriction map and the right vertical map gives the identity on $\bigoplus_{v\in W_0^M}\nn$.  Therefore the restriction map is an isomorphism.  
\end{proof}

\section{Resolutions and spectral sequences}\label{resnsandsss}

We now consider resolutions and projective dimensions of $\cH$-modules.  For $0\leq i \leq r_{\textnormal{ss}}$, we fix a finite set $\mathscr{F}_i$ of representatives of the $G$-orbits of $i$-dimensional facets in the Bruhat--Tits building of $G$, subject to the condition that every element of $\mathscr{F}_i$ is contained in $\overline{C}$.  Given a right $\cH$-module $\mm$, \cite[Thm. 3.12]{ollivierschneider} implies that we have a resolution
\begin{equation}\label{resn}
0\longrightarrow \bigoplus_{\cF\in \mathscr{F}_{r_{\textnormal{ss}}}}\mm|_{\cH_{\cF}^\dagger}(\epsilon_{\cF})\otimes_{\cH_{\cF}^\dagger}\cH\longrightarrow \ldots \longrightarrow \bigoplus_{\cF\in \mathscr{F}_0}\mm|_{\cH_{\cF}^\dagger}(\epsilon_{\cF})\otimes_{\cH_{\cF}^\dagger}\cH \longrightarrow \mm \longrightarrow 0,
\end{equation}
which gives a hyper-Ext spectral sequence
\begin{equation}\label{ss}
E_1^{i,j} = \Ext_{\cH}^j\left(\bigoplus_{\cF\in \mathscr{F}_i}\mm|_{\cH_{\cF}^\dagger}(\epsilon_{\cF})\otimes_{\cH_{\cF}^\dagger}\cH,\nn\right)\cong\bigoplus_{\cF\in \mathscr{F}_i}\Ext_{\cH_{\cF}^\dagger}^j\left(\mm|_{\cH_{\cF}^\dagger}(\epsilon_{\cF}),\nn|_{\cH_{\cF}^\dagger}\right)\Longrightarrow \Ext^{i + j}_{\cH}(\mm,\nn).
\end{equation}
(The isomorphism on the left-hand side of the spectral sequence above follows from Proposition 4.21 of \emph{loc. cit.} and the Eckmann--Shapiro lemma.)

\begin{remark}
We have the following variation on the above.  Let $G_\aff$ denote the subgroup of $G$ generated by all parahoric subgroups, and let $\cH_\aff$ denote the subalgebra of $\cH$ consisting of elements with support in $G_\aff$.  We view the algebras $\cH_\cF$ as subalgebras of $\cH_\aff$.

The group $G_\aff$ acts on the Bruhat--Tits building of $G$, with a transitive action on chambers.  For $0\leq i \leq r_{\textnormal{ss}}$, we fix a finite set $\mathscr{F}_i^{\aff}$ of representatives of the $G_\aff$-orbits of $i$-dimensional facets, subject to the condition that every element of $\mathscr{F}_i^{\aff}$ is contained in $\overline{C}$.  Note that the sets $\mathscr{F}_i$ and $\mathscr{F}_i^{\aff}$ are different in general.  The results of \cite{ollivierschneider} easily modify to this setting, and given a right $\cH_\aff$-module $\mm$, we obtain a resolution
\begin{equation}\label{affresn}
0\longrightarrow \bigoplus_{\cF\in \mathscr{F}_{r_{\textnormal{ss}}}^{\aff}}\mm|_{\cH_{\cF}}\otimes_{\cH_{\cF}}\cH_{\aff}\longrightarrow \ldots \longrightarrow \bigoplus_{\cF\in \mathscr{F}_0^{\aff}}\mm|_{\cH_{\cF}}\otimes_{\cH_{\cF}}\cH_{\aff} \longrightarrow \mm \longrightarrow 0,
\end{equation}
which gives a spectral sequence
\begin{equation}\label{affss}
E_1^{i,j} = \Ext_{\cH_{\aff}}^j\left(\bigoplus_{\cF\in \mathscr{F}_i^{\aff}}\mm|_{\cH_{\cF}}\otimes_{\cH_{\cF}}\cH_{\aff},\nn\right) \cong \bigoplus_{\cF\in \mathscr{F}_i^{\aff}}\Ext_{\cH_{\cF}}^j\left(\mm|_{\cH_{\cF}},\nn|_{\cH_{\cF}}\right)\Longrightarrow \Ext^{i + j}_{\cH_{\aff}}(\mm,\nn).
\end{equation}
In particular, the resolution \eqref{affresn} shows that $\cH_\aff$ is $r_{\textnormal{ss}}$-Gorenstein (cf. Propositions 1.2(i) and 4.21 of \emph{loc. cit.}).
\end{remark}

The following simple observation will be useful later.

\begin{lemma}\label{redtofacets}
Let $\mm$ denote a right $\cH$-module.  Then $\pd_{\cH}(\mm) < \infty$ if and only if $\pd_{\cH_{\cF}^\dagger}(\mm|_{\cH_{\cF}^\dagger}) < \infty$ for every facet $\cF\subset \overline{C}$.  Moreover, if $p\nmid|\Omega_{\tor}|$, then $\pd_{\cH}(\mm) < \infty$ if and only if $\mm|_{\cH_{\cF}}$ is projective over $\cH_{\cF}$ for every facet $\cF\subset \overline{C}$.  
\end{lemma}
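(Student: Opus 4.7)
The plan is to bridge $\cH$, $\cH_{\cF}^\dagger$, and $\cH_{\cF}$ by exploiting freeness in one direction and the Ollivier--Schneider spectral sequence \eqref{ss} in the other. For the forward direction of the first equivalence, given a projective resolution $P_\bullet \longrightarrow \mm$ over $\cH$ of length $\pd_{\cH}(\mm)$, I would invoke the fact that $\cH$ is free as a right $\cH_{\cF}^\dagger$-module (\cite{ollivierschneider}, Proposition 4.21), so restriction sends projectives to projectives. This yields $\pd_{\cH_{\cF}^\dagger}(\mm|_{\cH_{\cF}^\dagger}) \leq \pd_{\cH}(\mm) < \infty$ for every facet $\cF\subset \overline{C}$.

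For the backward direction, suppose $\pd_{\cH_{\cF}^\dagger}(\mm|_{\cH_{\cF}^\dagger}) < \infty$ for every $\cF\subset \overline{C}$. Since each $\cH_{\cF}^\dagger$ is $r_Z$-Gorenstein, Remark \ref{projinjdim} forces each of these dimensions to be at most $r_Z$, and Lemma \ref{twist} ensures the same bound after twisting by $\epsilon_{\cF}$. In the spectral sequence \eqref{ss}, this makes $E_1^{i,j} = 0$ for $j > r_Z$, while $\mathscr{F}_i = \varnothing$ forces $E_1^{i,j} = 0$ for $i > r_{\textnormal{ss}}$. Hence $\Ext_{\cH}^{n}(\mm,\nn) = 0$ for all $n > r_{\textnormal{ss}} + r_Z$ and all right $\cH$-modules $\nn$, and one more application of Remark \ref{projinjdim} (this time to $\cH$, which is $(r_{\textnormal{ss}} + r_Z)$-Gorenstein) delivers $\pd_{\cH}(\mm)\leq r_{\textnormal{ss}} + r_Z < \infty$.

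To obtain the second equivalence under the hypothesis $p\nmid|\Omega_{\tor}|$, I would note that $\Omega_{\cF,\tor}\subset \Omega_{\tor}$ for every $\cF$, so the hypothesis of Lemma \ref{daggerproj} is verified facet-by-facet. That lemma identifies ``$\mm|_{\cH_{\cF}}$ projective over $\cH_{\cF}$'' with ``$\pd_{\cH_{\cF}^\dagger}(\mm|_{\cH_{\cF}^\dagger})\leq r_Z$'', which by Remark \ref{projinjdim} is in turn equivalent to finiteness of $\pd_{\cH_{\cF}^\dagger}(\mm|_{\cH_{\cF}^\dagger})$. Combining this equivalence with the first part of the lemma then yields the second. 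I do not anticipate a substantive obstacle: the only subtlety is to remember that the $\epsilon_{\cF}$-twist appearing in \eqref{ss} is harmless thanks to Lemma \ref{twist}, so the spectral sequence genuinely detects the projective dimensions of the untwisted restrictions.
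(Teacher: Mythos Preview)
Your proposal is correct and follows essentially the same route as the paper. The only cosmetic difference is in the forward direction of the first equivalence: the paper phrases it via the Eckmann--Shapiro isomorphism $\Ext_{\cH_{\cF}^\dagger}^i(\mm|_{\cH_{\cF}^\dagger},\nn)\cong \Ext_{\cH}^i(\mm,\Hom_{\cH_{\cF}^\dagger}(\cH,\nn))$, whereas you restrict a finite projective resolution directly; both arguments rest on the same freeness statement from \cite{ollivierschneider}, Proposition~4.21, and the remaining steps (spectral sequence~\eqref{ss} with Lemma~\ref{twist} for the converse, and Lemma~\ref{daggerproj} plus Remark~\ref{projinjdim} for the second equivalence) match the paper exactly.
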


The same claims obviously apply to $\cH_{\aff}$ and $\cH_{\cF}$ (without the need for the ``$p\nmid|\Omega_\tor|$'' assumption).  

\begin{proof}
\cite[Prop. 4.21]{ollivierschneider} shows that $\cH$ is free over $\cH_{\cF}^\dagger$ as either a left or right module.  The Eckmann--Shapiro lemma then gives
$$\Ext_{\cH_{\cF}^\dagger}^i\left(\mm|_{\cH_{\cF}^\dagger},\nn\right)\cong \Ext_{\cH}^i\left(\mm,\Hom_{\cH_{\cF}^\dagger}(\cH,\nn)\right),$$
which gives one implication.  On the other hand, if $\pd_{\cH_{\cF}^\dagger}(\mm|_{\cH_{\cF}^\dagger}) < \infty$ for all $\cF\subset \overline{C}$, then the $E_1^{i,j}$ term in \eqref{ss} vanishes for $i$ and $j$ sufficiently large (this uses Lemma \ref{twist}).  The spectral sequence \eqref{ss} then shows that $\Ext_{\cH}^i(\mm,\nn) = 0$ for all $\cH$-modules $\nn$ and $i$ sufficiently large (independent of $\nn$), so that $\pd_{\cH}(\mm) < \infty$.

If we assume $p\nmid|\Omega_{\tor}|$, then Remark \ref{projinjdim} and Lemma \ref{daggerproj} imply that $\pd_{\cH_{\cF}^\dagger}(\mm|_{\cH_{\cF}^\dagger}) < \infty$ if and only if $\mm|_{\cH_{\cF}}$ is projective over $\cH_{\cF}$.  
\end{proof}

\section{Simple modules}\label{simplemods}

We recall the classification of simple $\cH$-modules from \cite{abe:heckemods}.  Let $P = M\ltimes N$ denote a standard parabolic subgroup, and let $\nn$ denote a simple supersingular right $\cH_M$-module (see \cite[Def. 6.10]{vigneras:hecke3} and Section \ref{ssingmods} below).  Set
$$\Pi(\nn) := \Pi_M \sqcup \left\{\alpha\in \Pi: \begin{array}{l} \diamond~\langle\beta,\alpha^\vee\rangle = 0 ~\textnormal{for all}~ \beta\in \Pi_M\\ \diamond~ \T^M_{\alpha^\vee(x)}~\textnormal{acts trivially on $\nn$ for all}~x\in F^\times\end{array}\right\},$$
and let $P(\nn) = M(\nn)\ltimes N(\nn)$ denote the corresponding standard parabolic subgroup.  Given any standard parabolic subgroup $Q = L\ltimes V$ satisfying $P\subset Q\subset P(\nn)$, \cite[Prop. 4.16]{abe:heckemods} shows that $\nn$ extends uniquely to an $\cH_L$-module satisfying certain properties.  We denote this extension by $\nn^{e_L}$.  Given any right $\cH_L$-module $\vv$, we define the parabolic coinduction
$$I_L(\vv) := \Hom_{\cH_L^-}(\cH, \vv),$$
where $\cH$ is viewed as a right $\cH_L^-$-module via $j_L^-$, and set
$$I(P,\nn,Q) := I_L(\nn^{e_L})\Bigg/\left(\sum_{Q \subsetneq Q' \subset P(\nn)} I_{L'}(\nn^{e_{L'}})\right).$$
By \cite[Thm. 4.22]{abe:heckemods}, the $\cH$-modules $I(P,\nn,Q)$ are simple and $I(P,\nn,Q)\cong I(P',\nn',Q')$ if and only if $P = P', Q = Q',$ and $\nn\cong \nn'$.  Moreover, the $I(P,\nn,Q)$ exhaust all isomorphism classes of simple right $\cH$-modules (for varying $P, \nn$, and $Q$).  Using \cite[Cor. 4.26]{abe:heckemods} and the fact that $I_{M}(\nn)$ is multiplicity-free, we get that the Jordan--H\"older factors of $I_L(\nn^{e_L})$ are given by
$$\{I(P,\nn, Q'): Q\subset Q' \subset P(\nn)\}.$$

We now give a result on the structure of $I(P,\nn,Q)$.  In the proof below, if $J$ is a subset of $\Pi$ such that $\Pi_M\subset J \subset \Pi(\nn)$, we use $I_J(\nn)$ to denote $I_{M_J}(\nn^{e_{M_J}})$.

\begin{lemma}\label{cech}
The \v{C}ech complex $\cC_\bullet$ of $I(P,\nn,Q)$
\begin{flushleft}
$\displaystyle{0\longrightarrow I_{M(\nn)}(\nn^{e_{M(\nn)}}) \stackrel{\partial_r}{\longrightarrow} \bigoplus_{\sub{L\subset L'\subset M(\nn)}{|\Pi_{L'}\smallsetminus\Pi_L| = r - 1}}I_{L'}(\nn^{e_{L'}})\stackrel{\partial_{r - 1}}{\longrightarrow} \ldots }$
\end{flushleft}
\begin{flushright}
$\displaystyle{\ldots \stackrel{\partial_{2}}{\longrightarrow} \bigoplus_{\sub{L\subset L'\subset M(\nn)}{|\Pi_{L'}\smallsetminus\Pi_L| = 1}}I_{L'}(\nn^{e_{L'}})\stackrel{\partial_1}{\longrightarrow}I_L(\nn^{e_L})\stackrel{\partial_0}{\longrightarrow} I(P,\nn,Q)\longrightarrow 0}$
\end{flushright}
is exact, where $r = |\Pi(\nn)\smallsetminus\Pi_L|$.  
\end{lemma}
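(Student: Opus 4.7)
The plan is to recognize $\cC_\bullet$ as a \v{C}ech complex associated to a cover of $V := I_L(\nn^{e_L})$, and to prove exactness by induction, using the multiplicity-freeness of $I_M(\nn)$ stated in the excerpt. To set notation, write $T := \Pi(\nn)\smallsetminus\Pi_L$ and, for $S\subset T$, let $V_S := I_{M_{\Pi_L\cup S}}(\nn^{e_{M_{\Pi_L\cup S}}})$, viewed as a submodule of $V = V_\emptyset$. The $k$th term of $\cC_\bullet$ is then $\bigoplus_{|S|=k}V_S$, with differentials given by alternating sums of the natural inclusions $V_S\hookrightarrow V_{S\smallsetminus\{\alpha\}}$. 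A direct observation shows that the augmentation $V\twoheadrightarrow I(P,\nn,Q)$ is the quotient by $\sum_{\alpha\in T}V_{\{\alpha\}}$, since any $L'$ with $L\subsetneq L'\subset M(\nn)$ satisfies $I_{L'}(\nn^{e_{L'}})\subset V_{\{\alpha\}}$ for every $\alpha\in\Pi_{L'}\smallsetminus\Pi_L$, and so the submodule defining $I(P,\nn,Q)$ agrees with the image of the first Čech differential $\partial_1$.

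The key inputs for exactness are two distributivity identities,
\[
(*)\quad V_S\cap V_{S'} = V_{S\cup S'}, \qquad (**)\quad V_S\cap\sum_{\alpha\in S'}V_{\{\alpha\}} = \sum_{\alpha\in S'}V_{S\cup\{\alpha\}},
\]
valid for all $S, S'\subset T$. Both follow from the multiplicity-freeness of $I_M(\nn)$: in any multiplicity-free module, additivity of Jordan--H\"older length forces $\textnormal{JH}(N_1\cap N_2) = \textnormal{JH}(N_1)\cap\textnormal{JH}(N_2)$ and $\textnormal{JH}(N_1+N_2) = \textnormal{JH}(N_1)\cup\textnormal{JH}(N_2)$ for any submodules $N_1, N_2$, and moreover two submodules with the same Jordan--H\"older factor set must coincide. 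Combining this principle with Abe's description $\textnormal{JH}(V_J) = \{I(P,\nn,Q'') : J\subset J''\subset\Pi(\nn)\}$ (Corollary 4.26 of \cite{abe:heckemods}), the identities $(*)$ and $(**)$ reduce to direct set-theoretic manipulations.

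With these in hand, I would prove the strengthened statement that for any $T'\subset T$, the analogous \v{C}ech complex of $V$ with cover $\{V_{\{\alpha\}}\}_{\alpha\in T'}$ augmented by $V/\sum_{\alpha\in T'}V_{\{\alpha\}}$ is exact, by induction on $|T'|$, with trivial base cases $|T'| = 0, 1$. For the inductive step, pick $\alpha_0\in T'$ and split by whether $\alpha_0\in S$ to obtain a short exact sequence of complexes $0\to\cD''_\bullet\to\cD_\bullet\to\cD'_\bullet\to 0$: the subcomplex $\cD''_\bullet$ is the \v{C}ech complex for the cover $T'\smallsetminus\{\alpha_0\}$ of $V$ (exact by the inductive hypothesis), and the quotient $\cD'_\bullet$, after a degree shift, is the \v{C}ech complex of the (still multiplicity-free) module $V_{\{\alpha_0\}}$ covered by $\{V_{\{\alpha_0,\alpha\}}\}_{\alpha\in T'\smallsetminus\{\alpha_0\}}$, again exact by induction. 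The long exact sequence in homology then collapses to a single connecting map $V_{\{\alpha_0\}}/\sum_\alpha V_{\{\alpha_0,\alpha\}}\to V/\sum_{\alpha\neq\alpha_0}V_{\{\alpha\}}$, which identity $(**)$ identifies with the injective map induced by $V_{\{\alpha_0\}}\hookrightarrow V$; its cokernel is $V/\sum_{\alpha\in T'}V_{\{\alpha\}}$, completing the inductive step.

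The main obstacle is identity $(**)$: it mixes intersection and sum and hence a priori requires the full distributivity of the sublattice of submodules generated by the $V_{\{\alpha\}}$, which does not hold in general lattices of submodules. The Jordan--H\"older route sidesteps this, but crucially uses that submodules of the multiplicity-free module $V$ are determined by their composition-factor sets—an essential extraction from multiplicity-freeness that must be made explicit. Once $(**)$ is established, the remainder is a routine long-exact-sequence chase.
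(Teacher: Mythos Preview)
Your argument is correct, modulo a minor imprecision in the induction: you state the strengthened claim for the fixed module $V$ and varying $T'\subset T$, but then apply the hypothesis to the Čech complex of $V_{\{\alpha_0\}}$, which is a different module. This is harmless---either phrase the induction uniformly over all Levi $L$ with $M\subset L\subset M(\nn)$ (so that $V_{\{\alpha_0\}}$ is the ``$V$'' for the next Levi up), or prove once and for all that the Čech complex of any finite cover of any multiplicity-free finite-length module by submodules is acyclic---but it should be said explicitly.

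The approach differs from the paper's. The paper carries out a direct element-chase: given a cycle $(f_{J'})$, it repeatedly subtracts explicit boundaries to kill the components $f_{J'}$ one lexicographic layer at a time. The crucial step there is the identity
\[
I_{J_0'}(\nn)\cap\Big(\sum_k I_{(J_0'\smallsetminus\{\alpha_j\})\cup\{\alpha_{j-1+k}\}}(\nn)\Big)=\sum_k I_{J_0'\cup\{\alpha_{j-1+k}\}}(\nn),
\]
proved by comparing Jordan--Hölder factors---exactly your identity $(**)$. So the two proofs share the same substantive input (distributivity of the submodule lattice forced by multiplicity-freeness), but package it differently: the paper's argument is concrete and self-contained, while your short-exact-sequence-of-complexes induction is cleaner and isolates $(**)$ as the single nontrivial ingredient, at the cost of a slightly more elaborate induction statement.
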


We will abbreviate the resolution above by $\cC_\bullet \longrightarrow I(P,\nn,Q)\longrightarrow 0$.

\begin{proof}
The differentials are given as follows.  We fix a numbering $\Pi(\nn)\smallsetminus \Pi_L = \{\alpha_1, \ldots, \alpha_r\}$, and let $(f_{J'})_{J'}\in \bigoplus_{\sub{\Pi_L\subset J'\subset \Pi(\nn)}{|J'\smallsetminus \Pi_L| = j}} I_{J'}(\nn) = \cC_j$ for $1\leq j \leq r$.  Let $J''$ denote a subset of $\Pi$ such that $\Pi_L \subset J'' \subset \Pi(\nn)$ and $|J''\smallsetminus \Pi_L| = j - 1$, and write $\Pi(\nn)\smallsetminus J'' = \{\alpha_{\ell_1}, \ldots, \alpha_{\ell_{r - j + 1}}\}$ with $\ell_1 < \ldots < \ell_{r - j + 1}$.  We then have
$$(\textnormal{pr}_{J''}\circ\partial_j)((f_{J'})_{J'}) = \sum_{k = 1}^{r - j + 1}(-1)^k f_{J''\cup\{\alpha_{\ell_k}\}}.$$
The standard argument shows that this gives a complex.

Exactness at $\cC_0$ is clear.  Suppose now that $(f_{J'})_{J'} \in \ker(\partial_j)$ for $1\leq j \leq r$, and set $J_0' := \Pi_L\cup\{\alpha_1,\ldots, \alpha_j\}$.  We have
$$(\textnormal{pr}_{J_0'\smallsetminus\{\alpha_j\}}\circ\partial_j)((f_{J'})_{J'}) =  \sum_{k = 1}^{r - j + 1}(-1)^k f_{(J_0'\smallsetminus\{\alpha_j\})\cup \{\alpha_{j - 1 + k}\}} = 0,$$
which implies $f_{J_0'}\in I_{J_0'}(\nn)\cap \left(\sum_{k = 2}^{r - j + 1}I_{(J_0'\smallsetminus\{\alpha_j\})\cup \{\alpha_{j - 1 + k}\}}(\nn)\right)$.  We clearly have
\begin{eqnarray*}
\sum_{k = 2}^{r - j + 1} I_{J_0' \cup\{\alpha_{j - 1 + k}\}}(\nn) & \subset & \sum_{k = 2}^{r - j + 1} \left(I_{J_0'}(\nn)\cap I_{ (J_0'\smallsetminus\{\alpha_j\})\cup\{\alpha_{j - 1 + k}\}}(\nn)\right)\\
 & \subset & I_{J_0'}(\nn)\cap \left(\sum_{k = 2}^{r - j + 1}I_{(J_0'\smallsetminus\{\alpha_j\})\cup \{\alpha_{j - 1 + k}\}}(\nn)\right),
\end{eqnarray*}
and comparing the Jordan--H\"older factors of both sides shows both inclusions must be equalities (this uses the fact that $I_M(\nn)$ is multiplicity-free).  
Write 
$$f_{J_0'} = \sum_{k = 2}^{r - j + 1}(-1)^{k - 1} g_{J_0'\cup \{\alpha_{j - 1 + k}\}},$$
 with $g_{J_0'\cup \{\alpha_{j - 1 + k}\}}\in I_{J_0'\cup\{\alpha_{j - 1 + k}\}}(\nn)$, and define $(g_K)_K\in \bigoplus_{\sub{\Pi_L\subset K\subset \Pi(\nn)}{|K\smallsetminus\Pi_L| = j + 1}}I_K(\nn) = \cC_{j + 1}$ by 
$$g_K = \begin{cases}g_{J_0'\cup\{\alpha_{j - 1 + k}\}} & \textnormal{if}~K = J_0'\cup\{\alpha_{j - 1 + k}\}, k\geq 2,\\ 0 & \textnormal{otherwise.}\end{cases}$$
This gives 
$$\textnormal{pr}_{J_0'}((f_{J'})_{J'} - \partial_{j + 1}((g_K)_K)) = f_{J_0'} - \sum_{k = 1}^{r - j}(-1)^k g_{J_0'\cup\{\alpha_{j + k}\}} = 0.$$
Replacing $(f_{J'})_{J'}$ by $(f_{J'})_{J'} - \partial_{j + 1}((g_K)_K)$, we may assume $f_{\Pi_L\cup\{\alpha_1,\ldots, \alpha_j\}} = 0$.

We now fix $j \leq s \leq r - 1$, and assume that $f_{J'} = 0$ for every $J'$ with $J'\smallsetminus \Pi_L\subset \{\alpha_1,\ldots, \alpha_s\}$.  Let $J_0'$ be such that $J_0'\smallsetminus\Pi_L\subset \{\alpha_1,\ldots,\alpha_{s + 1}\}$ and $\alpha_{s + 1}\in J_0'$, and write $\Pi(\nn)\smallsetminus J_0' = \{\alpha_{\ell_1}, \ldots, \alpha_{\ell_{r - j}}\}$ as above.   By assumption, we have
$$(\textnormal{pr}_{J_0'\smallsetminus\{\alpha_{s + 1}\}}\circ\partial_j)((f_{J'})_{J'}) = \pm f_{J_0'} + \sum_{\sub{k = 1}{\ell_k> s + 1}}^{r - j}(-1)^{k + 1}f_{(J_0'\smallsetminus\{\alpha_{s + 1}\})\cup\{\alpha_{\ell_k}\}} = 0,$$
and as above we may write 
$$f_{J_0'} = \sum_{\sub{k = 1}{\ell_k> s + 1}}^{r - j} (-1)^{k}g_{J_0'\cup\{\alpha_{\ell_k}\}}$$ 
with $g_{J_0'\cup\{\alpha_{\ell_k}\}}\in I_{J_0'\cup\{\alpha_{\ell_k}\}}(\nn)$.  We define $(g_K)_K\in \bigoplus_{\sub{\Pi_L\subset K\subset \Pi(\nn)}{|K\smallsetminus\Pi_L| = j + 1}}I_K(\nn) = \cC_{j + 1}$ by
$$g_K = \begin{cases}g_{J_0'\cup\{\alpha_{\ell_k}\}} & \textnormal{if}~K = J_0'\cup\{\alpha_{\ell_k}\}, \ell_k > s + 1,\\ 0 & \textnormal{otherwise.}\end{cases}$$

Fix $J_1'$ with $J_1'\smallsetminus \Pi_L \subset \{\alpha_1,\ldots, \alpha_{s + 1}\}$ and $|J_1'\smallsetminus\Pi_L| = j$.  One easily checks that
$$\textnormal{pr}_{J_1'}((f_{J'})_{J'} - \partial_{j + 1}((g_K)_K)) = \begin{cases}0 & \textnormal{if}~J_1' = J_0',\\ f_{J_1'} & \textnormal{if}~J_1' \neq J_0'.\end{cases}$$
Therefore, replacing $(f_{J'})_{J'}$ by $(f_{J'})_{J'} - \partial_{j + 1}((g_K)_K)$ (several times if necessary), we may assume that $f_{J'} = 0$ for every $J'$ with $J'\smallsetminus \Pi_L \subset \{\alpha_1,\ldots, \alpha_{s + 1}\}$.  By induction on $s$ we get that $(f_{J'})_{J'}\in \textnormal{im}(\partial_{j + 1})$.  
\end{proof}

\begin{remark}
The above proof remains valid for an arbitrary (i.e., not necessarily split) connected reductive group.  
\end{remark}

Now let $\psi:T\longrightarrow \fpb^\times$ be a smooth character of $T$.  We use the same notation $\psi$ to denote the corresponding right $\cH_T$-module, with action given by $v\cdot \T_\lambda^T = \psi(\lambda)^{-1}v$, for $v\in \psi, \lambda\in \widetilde{\Lambda}$.  Note that the $\cH_T$-module $\psi$ is supersingular (see Section \ref{ssingmods}).  We will call $I_T(\psi)$ a \emph{principal series module}.  In this setting, we have $\Pi(\psi) = \{\alpha\in \Pi:\psi\circ\alpha^\vee(x) = 1~\textnormal{for all}~x\in F^\times\}$, and we let $P(\psi)$ denote the corresponding standard parabolic subgroup.

\begin{lemma}\label{psproj} 
Let $\psi:T\longrightarrow \fpb^\times$ be a smooth character, and let $Q = L\ltimes V$ denote a standard parabolic subgroup with $B\subset Q\subset P(\psi)$.  
\begin{enumerate}
\item The right $\cH_x$-module $I_L(\psi^{e_L})|_{\cH_{x}}$ is projective.  
\item The right $\cH_{x}$-module $I(B,\psi,Q)|_{\cH_{x}}$ is projective.
\end{enumerate}
\end{lemma}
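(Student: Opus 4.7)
For part~(1), the plan is to apply the Mackey formula (Lemma~\ref{mackey}) to identify
$$I_L(\psi^{e_L})|_{\cH_x} \;\cong\; \Hom_{\cH_L^-\cap\cH_x}\bigl(\cH_x,\,\psi^{e_L}|_{\cH_L^-\cap\cH_x}\bigr)$$
as right $\cH_x$-modules. Since restriction from right $\cH_x$-modules to right $(\cH_L^-\cap\cH_x)$-modules is exact, its right adjoint $\Hom_{\cH_L^-\cap\cH_x}(\cH_x,-)$ preserves injectives. The subalgebra $\cH_L^-\cap\cH_x$ is naturally identified with the pro-$p$-Iwahori--Hecke algebra of $L$ attached to the hyperspecial vertex $x$; by Remark~\ref{projinjdim} applied inside $L$ it is $0$-Gorenstein, and $\cH_x$ itself is $0$-Gorenstein by the same remark. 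Hence projective and injective modules coincide over both algebras, and the problem reduces to showing that $\psi^{e_L}|_{\cH_L^-\cap\cH_x}$ is projective over $\cH_L^-\cap\cH_x$.

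For this reduction, I would apply Mackey a second time---this time within $L$ and with $T$ in the role of the Levi---to the parabolic coinduction $I^L_T(\psi)$ of $\psi$ from $T$ to $L$, obtaining
$$I^L_T(\psi)\big|_{\cH_L^-\cap\cH_x} \;\cong\; \Hom_{\fpb[T(k_F)]}\bigl(\cH_L^-\cap\cH_x,\,\psi\bigr).$$
Since $p\nmid|T(k_F)|$, the group algebra $\fpb[T(k_F)]$ is semisimple, so $\psi$ is projective (equivalently injective) over it, and by the same adjunction argument the right-hand side is projective over $\cH_L^-\cap\cH_x$. It then remains to exhibit $\psi^{e_L}|_{\cH_L^-\cap\cH_x}$ as a direct summand of this projective module. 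The plan is to construct an explicit idempotent in $\cH_L^-\cap\cH_x$---built from the character idempotent $e_\psi := |T(k_F)|^{-1}\sum_{\lambda\in T(k_F)}\psi(\lambda)\T_\lambda$ combined with an antisymmetrizer formed from the $\T_{\widehat{w}}$ for $w\in W_{L,0}$---whose corresponding summand realizes $\psi^{e_L}|_{\cH_L^-\cap\cH_x}$. The hypothesis $L\subset P(\psi)$, which forces $\psi$ to be trivial on $\alpha^\vee(k_F^\times)$ for every $\alpha\in\Pi_L$, is exactly what makes this candidate idempotent satisfy the quadratic relations of $\cH_L^-\cap\cH_x$.

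Part~(2) follows formally from part~(1): the \v{C}ech resolution of Lemma~\ref{cech} provides a finite exact sequence ending in $I(B,\psi,Q)$ whose terms are direct sums of modules of the form $I_{L'}(\psi^{e_{L'}})$ for Levi subgroups $L\subset L'\subset M(\psi)$. By part~(1), each such module restricts to a projective $\cH_x$-module, so $I(B,\psi,Q)|_{\cH_x}$ admits a finite-length resolution by projectives, hence has finite projective dimension over $\cH_x$. Invoking the $0$-Gorenstein property of $\cH_x$ from Remark~\ref{projinjdim} then upgrades finite projective dimension to projectivity.

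The main technical hurdle is the direct-summand construction in part~(1): producing an idempotent in $\cH_L^-\cap\cH_x$ that cleanly cuts out the one-dimensional module $\psi^{e_L}|_{\cH_L^-\cap\cH_x}$ inside the coinduced principal series $\Hom_{\fpb[T(k_F)]}(\cH_L^-\cap\cH_x,\psi)$. The condition $L\subset P(\psi)$ enters essentially at this step, both to ensure the existence of the one-dimensional extension $\psi^{e_L}$ and to guarantee the compatibility of the candidate idempotent with the relations in the finite-type pro-$p$-Iwahori--Hecke algebra $\cH_L^-\cap\cH_x$.
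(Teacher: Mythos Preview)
Your overall framework for part~(1) matches the paper: apply the Mackey formula to identify $I_L(\psi^{e_L})|_{\cH_x}$ with a coinduction from $\cH_L^-\cap\cH_x$, use that coinduction preserves injectives, and invoke the $0$-Gorenstein property to pass between injective and projective. Part~(2) is identical to the paper's argument.

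Where you diverge is in the final reduction step, showing that $\psi^{e_L}|_{\cH_L^-\cap\cH_x}$ is injective (equivalently projective). The paper does this in one stroke: by the very definition of the extension (Proposition~4.16 of \cite{abe:heckemods}), $\psi^{e_L}|_{\cH_L^-\cap\cH_x}$ is a twist of the \emph{trivial} character of the finite Hecke algebra $\cH_L^-\cap\cH_x$ (i.e., each $\T_{\widehat{s_\alpha}}$ acts by~$0$), and the trivial character of a finite $0$-Hecke algebra is injective by Norton's Theorem~5.2 (or \cite{ollivierschneider}, Proposition~6.19). No further work is needed.

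Your detour through $I_T^L(\psi)$ and an explicit idempotent is not wrong, but it is circuitous, and as written it contains a slip: you propose an ``antisymmetrizer'' built from the $\T_{\widehat w}$, which would cut out the \emph{sign}-type summand (where each $\T_{\widehat{s_\alpha}}$ acts by $-1$), whereas $\psi^{e_L}$ is the trivial-type extension. The idempotent you actually need is the one projecting onto the trivial character of the block $e_\psi(\cH_L^-\cap\cH_x)$; its existence is precisely the content of Norton's theorem. So your ``main technical hurdle'' dissolves once you recognize what $\psi^{e_L}|_{\cH_L^-\cap\cH_x}$ is, and the second Mackey application becomes unnecessary.
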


\begin{proof}
(1) Using the Mackey formula (Lemma \ref{mackey}), we have
$$I_L(\psi^{e_L})|_{\cH_x} \cong \Hom_{\cH_L^-\cap \cH_x}(\cH_x, \psi^{e_L}|_{\cH_L^-\cap \cH_x}).$$
The construction of \cite[Prop. 4.16]{abe:heckemods} shows that $\psi^{e_L}|_{\cH_L^-\cap \cH_x}$ is a twist of the trivial character (cf. \eqref{trivsign}), and therefore it is an injective $\cH_L^-\cap \cH_x$-module (this follows from \cite[Prop. 6.19]{ollivierschneider} or \cite[Thm. 5.2]{norton}).  It follows that $\Hom_{\cH_L^-\cap \cH_x}(\cH_x,\psi^{e_L}|_{\cH_L^-\cap \cH_x})$ is injective as an $\cH_x$-module, and the result follows from Remark \ref{projinjdim}.

(2)  Restricting the \v{C}ech resolution of Lemma \ref{cech} to $\cH_x$ and using part (1) gives a projective resolution
$$\cC_\bullet|_{\cH_x} \longrightarrow I(B,\psi,Q)|_{\cH_x}\longrightarrow 0,$$
and therefore $I(B,\psi,Q)|_{\cH_x}$ has finite projective dimension.  We conclude by using Remark \ref{projinjdim}.  
\end{proof}

\section{Principal series modules -- Type $A_n$}

As a first step, we examine the simple subquotients of principal series modules in type $A_n$.  We will generalize this result in the coming sections.

\begin{propn}\label{Anprop}
Assume the root system of $G$ is of type $A_{n}$, and assume $p\nmid|\Omega_\tor|$.  Let $\mm$ be a simple subquotient of a principal series module for $\cH$.  Then $\pd_{\cH}(\mm)\leq n + r_Z$.  Moreover, when $G$ is semisimple, \eqref{resn} is a projective resolution of $\mm$, and the bound on projective dimension is sharp.  
\end{propn}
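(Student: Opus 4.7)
The plan is to combine Abe's classification of simple $\cH$-modules with the reductions of Section~\ref{prelim} and the local projectivity result of Lemma~\ref{psproj}. By Abe's classification, any simple subquotient of a principal series module takes the form $\mm = I(B,\psi,Q)$ for some smooth character $\psi:T\to\fpb^\times$ and standard parabolic $Q$ with $B\subset Q\subset P(\psi)$. The hypothesis $p\nmid|\Omega_{\tor}|$ allows me to invoke Lemma~\ref{redtofacets}, which reduces $\pd_{\cH}(\mm)<\infty$ to showing that $\mm|_{\cH_{\cF}}$ is projective over $\cH_{\cF}$ for every facet $\cF\subset\overline{C}$. Since every such $\cF$ has a vertex in its closure, Lemma~\ref{restr} reduces this further to showing projectivity of $\mm|_{\cH_{y}}$ over $\cH_{y}$ for every vertex $y\subset\overline{C}$.

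For the distinguished hyperspecial vertex $y=x$ this is precisely Lemma~\ref{psproj}(2). For an arbitrary vertex $y\subset\overline{C}$ I would use the fact that in type $A_{n}$ every such $y$ is also hyperspecial and that the affine Dynkin diagram $\widetilde{A}_{n}$ has a cyclic symmetry permuting its nodes. This symmetry is realized by an automorphism of $\cH$ identifying $\cH_y$ with $\cH_x$---arising from an element of $\Omega$ when $\Omega$ is nontrivial, e.g.\ for $G=\textnormal{PGL}_{n+1}$, or from a diagram automorphism more generally---which sends simple subquotients of principal series to subquotients of (possibly twisted) principal series. Projectivity of $\mm|_{\cH_{y}}$ thus reduces to projectivity of $\mm'|_{\cH_{x}}$ for a suitable companion module $\mm'$; alternatively, one can redo the proof of Lemma~\ref{psproj} using an analog of the Mackey formula (Lemma~\ref{mackey}) with $y$ as the base hyperspecial vertex. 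The bound $\pd_{\cH}(\mm)\leq n+r_{Z}$ then follows from Remark~\ref{projinjdim}, since $\cH$ is $(r_{\textnormal{ss}}+r_{Z})=(n+r_{Z})$-Gorenstein.

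For the semisimple case one has $r_{Z}=0$, so the bound becomes $n$, and \eqref{resn} has length $n$. To show it is a projective resolution of $\mm$ I would verify each term $\mm|_{\cH_{\cF}^\dagger}(\epsilon_{\cF})\otimes_{\cH_{\cF}^\dagger}\cH$ is projective. Lemma~\ref{daggerproj} (with $r_{Z}=0$) upgrades projectivity of $\mm|_{\cH_{\cF}}$ to projectivity of $\mm|_{\cH_{\cF}^\dagger}$ over $\cH_{\cF}^\dagger$; Lemma~\ref{twist} carries this across the $\epsilon_{\cF}$-twist; and freeness of $\cH$ over $\cH_{\cF}^\dagger$ (Proposition~4.21 of \cite{ollivierschneider}) yields projectivity of the tensor product over $\cH$. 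For sharpness I would pick a specific $\mm$---the trivial character of $\cH$ is a natural candidate, being a subquotient of the trivial principal series---and compute $\Ext^{n}_{\cH}(\mm,\mm)\neq 0$ directly from \eqref{resn}, noting that the top-degree term (indexed by the chamber $C$) produces a nonvanishing contribution.

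The main obstacle is the transfer step just described: promoting Lemma~\ref{psproj}(2) from the distinguished vertex $x$ to an arbitrary hyperspecial vertex $y\subset\overline{C}$. The Mackey-style argument of Lemma~\ref{mackey} is set up around the chosen base $x$, and reconciling it with the cyclic symmetry of $\widetilde{A}_{n}$---particularly when the symmetry is not visible through $\Omega$, as happens for $G=\textnormal{SL}_{n+1}$---requires careful tracking of the algebra automorphisms of $\cH$ and their effect on principal-series structures.
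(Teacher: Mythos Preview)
Your overall strategy matches the paper's proof: reduce to projectivity of $\mm|_{\cH_y}$ over $\cH_y$ for each vertex $y \in \overline{C}$ via Lemmas~\ref{restr} and~\ref{redtofacets}, then invoke Lemma~\ref{psproj}. The treatment of the semisimple case (upgrading to $\cH_{\cF}^\dagger$ via Lemma~\ref{daggerproj}, twisting via Lemma~\ref{twist}, inducing) is also the same. Two points deserve comment.

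\textbf{The transfer step.} The paper does not use automorphisms of $\cH$ or cyclic symmetry of the affine diagram. Instead, it observes that the entire apparatus---the Borel $B$, the positive roots $\Phi^+$, the subalgebras $\cH_M^-$, parabolic coinduction, and hence Lemmas~\ref{mackey} and~\ref{psproj}---is defined \emph{relative to the chosen hyperspecial vertex}~$x$. Since in type~$A_n$ every vertex $y \in \overline{C}$ is hyperspecial, one may simply re-choose $y$ as the base vertex, obtaining a new Borel~$B'$ and a new classification. The same module $\mm$ is then isomorphic to some $I(B',\psi',Q')$, because the property of being a simple subquotient of a principal series module is intrinsic (detected by the central character, which is defined independently of the choice of vertex). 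Lemma~\ref{psproj}(2) then applies verbatim with $y$ in place of~$x$. This is exactly your ``alternative'' of redoing Lemma~\ref{psproj} with $y$ as the base, but there is nothing to redo: the lemma already holds for any hyperspecial base vertex, and the only new input is the central-character argument showing $\mm$ remains a principal series subquotient after re-basing. Your automorphism route, by contrast, runs into genuine trouble for simply connected groups such as $\textnormal{SL}_{n+1}$, where $\Omega$ is trivial and no diagram automorphism of $\cH$ permuting the vertices is available internally.

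\textbf{Sharpness.} The statement asserts sharpness for the \emph{given}~$\mm$, i.e., $\pd_{\cH}(\mm) = n$ when $G$ is semisimple. Exhibiting a single module (the trivial character) with a nonvanishing top Ext group does not establish this. The paper instead cites Corollary~6.12 of \cite{ollivierschneider}, which applies to any~$\mm$ of finite projective dimension; cf.\ the proof of Corollary~\ref{maincor}.
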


\begin{proof}
Since the root system of $G$ is of type $A_{n}$, every vertex in the Bruhat--Tits building is hyperspecial.  Given the vertex $x\in \overline{C}$, we may write $\mm\cong I(B,\psi,Q)$ for some character $\psi$ of $T$ and standard parabolic subgroup $Q$.  Note that $B, \psi,$ and $Q$ implicitly depend on the vertex $x$, which defines a set of positive roots $\Phi^+$.  Choosing another hyperspecial vertex $x'\in \overline{C}$, we obtain an isomorphism $\mm\cong I(B',\psi',Q')$, where $B'$ is the Borel subgroup containing $T$ and defined by $x'$, possibly different from $B$ (this isomorphism follows from considering central characters).  Therefore, Lemma \ref{psproj} shows that $\mm|_{\cH_{y}}$ is projective for every vertex $y\in \overline{C}$, and Lemma \ref{restr} shows that $\mm|_{\cH_{\cF}}$ is projective for every facet $\cF\subset \overline{C}$.  Lemma \ref{redtofacets} and Remark \ref{projinjdim} then give the bound on the projective dimension.

When $G$ is semisimple, we have $r_Z = 0$, so that $\mm|_{\cH_{\cF}^\dagger}(\epsilon_{\cF})$ is projective over $\cH_{\cF}^\dagger$ for every facet $\cF\subset \overline{C}$.  Since induction preserves projectivity, \eqref{resn} is a projective resolution.  Finally, \cite[Cor. 6.12]{ollivierschneider} gives the sharpness of the bound.  
\end{proof}

\begin{ex}
We give an example to show that the condition $p\nmid|\Omega_\tor|$ in Proposition \ref{Anprop} is necessary.  Assume that $F$ is a finite extension of $\bbQ_2$, and let $G = \textnormal{PGL}_2(F)$, so that $\Omega_\tor = \Omega \cong \bbZ/2\bbZ$.  Let $\mm$ denote either the trivial or sign character of $\cH$ (see \eqref{trivsign} below).  The action of $\cH_C^\dagger\cong \overline{\mathbb{F}}_2[\tOmega]$ on $\mm$ factors through a block isomorphic to the algebra $\overline{\mathbb{F}}_2[\Omega]\cong \overline{\mathbb{F}}_2[X]/(X^2)$, and therefore the module $\mm|_{\cH_C^\dagger}$ has infinite projective dimension (see Example \ref{degA1v2} below).  By Lemma \ref{redtofacets}, $\mm$ has infinite projective dimension over $\cH$.  
\end{ex}

\section{Supersingular modules}\label{ssingmods}

We again suppose $G$ is an arbitrary split connected reductive group, and turn our attention to the supersingular modules.  We fix a set $\{\widehat{s_\alpha}\}_{s_\alpha\in S}\subset\tW_\aff$ of lifts of elements of $S$ as in \cite[\S 4.8]{ollivierschneider}.  The affine algebra $\cH_\aff$ is then generated by the elements $\{\T_{t}\}_{t\in T(k_F)}$ and $\{\T_{\widehat{s_\alpha}}\}_{s_\alpha\in S}$, with quadratic relations given by
$$\T_{\widehat{s_\alpha}}^2 = \T_{\widehat{s_\alpha}}\left(\sum_{x\in k_F^\times}\T_{\alpha^\vee(x)}\right).$$
(If $\alpha$ is an affine root of the form $\alpha(\lambda) = \beta(\lambda) + k$ with $\beta\in \Phi, k\in \bbZ$, and $\lambda\in T$, we define $\alpha^\vee:= \beta^\vee$.)  Moreover, each irreducible component $\Pi'$ of $\Pi$ gives rise to a subset $S'$ of $S$ and a subalgebra of $\cH_\aff$, generated by $\{\T_{t}\}_{t\in T(k_F)}$ and $\{\T_{\widehat{s_\alpha}}\}_{s_\alpha\in S'}$; these are called the \emph{irreducible components of $\cH_{\aff}$} (see the discussion preceding \cite[Lem. 6.8]{vigneras:hecke3}).

Recall from \cite[Prop. 2.2]{vigneras:hecke3} that the characters of $\cH_\aff$ are parametrized by pairs $(\xi, J)$, where $\xi:T(k_F)\longrightarrow \fpb^\times$ is a character, and $J\subset S_\xi$, where
$$S_\xi:=\left\{s_\alpha\in S: \xi\circ\alpha^\vee(x) = 1~\textnormal{for all}~x\in k_F^\times\right\}.$$
If $\chi$ is parametrized by $(\xi,J)$, then we have
\begin{eqnarray*}
\chi(\T_t) & = & \xi(t),\\
\chi(\T_{\widehat{s_\alpha}}) & = & \begin{cases}\phantom{-}0 & \textnormal{if}~s_\alpha\not\in J,\\ -1 & \textnormal{if}~s_\alpha\in J.\end{cases}
\end{eqnarray*}

In particular, the \emph{trivial and sign characters of $\cH_{\aff}$} are parametrized by $(\bld,\emptyset)$ and $(\bld,S)$, respectively, where $\bld$ denotes the trivial character of $T(k_F)$.  These characters extend to characters of $\cH$: they are given by
\begin{equation}\label{trivsign}
\T_{\tw}\longmapsto q^{\ell(\tw)} \qquad \textnormal{and}\qquad \T_{\tw}\longmapsto (-1)^{\ell(\tw)},
\end{equation}
respectively (using the convention that $0^0 = 1$).  Given any subalgebra $\cA$ of $\cH$ (e.g., $\cH_{\cF}$, $\cH_{\cF}^\dagger$, etc.), we define the \emph{trivial and sign characters of $\cA$} to be the restrictions of the above characters to $\cA$.   Finally, if $\chi$ is a character of $\cH_\aff$ parametrized by $(\xi,J)$ and $\xi':T(k_F)\longrightarrow \fpb^\times$ is a character such that $S_{\xi'} = S$, we define the \emph{twist of $\chi$ by $\xi'$} to be the character of $\cH_\aff$ parametrized by $(\xi\xi',J)$.

There is a notion of supersingularity for characters of $\cH_\aff$.  We will not give the actual definition here, but merely point out that \cite[Thm. 6.15]{vigneras:hecke3} implies that a character $\chi$ of $\cH_\aff$ is supersingular if and only if its restriction to each irreducible component of $\cH_\aff$ is different from a twist of the trivial or sign character.  When $G = T$ (so that $r_{\textnormal{ss}} = 0$), every character of $\cH_\aff$ is supersingular.

Fix a supersingular character $\chi$.  Let $\tOmega_\chi$ denote the subgroup of $\tOmega$ such that, for every $\widetilde{\omega}\in \tOmega_\chi$, the element $\T_{\widetilde{\omega}}$ fixes $\chi$ under conjugation (note that $\T_{\widetilde{\omega}}$ is invertible and preserves $\cH_\aff$ under conjugation).  We have $T(k_F)\subset \tOmega_\chi$, and $\tOmega_\chi$ is of finite index in $\tOmega$.  We let $\cH_{\aff,\chi}$ denote the subalgebra of $\cH$ generated by $\cH_{\aff}$ and $\{\T_{\widetilde{\omega}}\}_{\widetilde{\omega}\in \tOmega_\chi}$; the braid relations imply that $\cH_{\aff,\chi}$ is free over $\cH_{\aff}$, and $\cH$ is free over $\cH_{\aff,\chi}$ (as either left or right modules).

Now let $\tau$ denote an irreducible finite-dimensional representation of $\tOmega_\chi$ (with $\tOmega_\chi$ acting on the \emph{right}), such that the action of $t\in T(k_F)$ on $\tau$ is equal to the scalar $\chi(\T_{t})$.  We endow $\chi\otimes_{\fpb}\tau$ with a right action of $\cH_{\aff,\chi}$: the element $\T_{\widetilde{w}}\in \cH_{\aff}$ acts by the scalar $\chi(\T_{\widetilde{w}})$, while the elements $\T_{\widetilde{\omega}}$ for $\widetilde{\omega}\in \tOmega_\chi$ act via the representation $\tau$.  Then every simple supersingular right $\cH$-module is isomorphic to
$$\left(\chi\otimes_{\fpb}\tau\right)\otimes_{\cH_{\aff,\chi}}\cH,$$
for a supersingular character $\chi$ of $\cH_{\aff}$ and a finite-dimensional irreducible representation $\tau$ of $\tOmega_\chi$, which agrees with $\chi$ on $T(k_F)$ (see \cite[Thm. 6.18]{vigneras:hecke3}).

\begin{lemma}\label{redtoHaff}
Let $\chi$ be a supersingular character of $\cH_\aff$, and let $\tau$ be an irreducible finite-dimensional representation of $\tOmega_\chi$ whose restriction to $T(k_F)$ agrees with $\chi$ (in the sense above).  Set $\mm := (\chi\otimes_{\fpb}\tau)\otimes_{\cH_{\aff,\chi}}\cH$.  If $\pd_{\cH}(\mm) < \infty$, then $\pd_{\cH_{\aff}}(\chi) < \infty$.  
\end{lemma}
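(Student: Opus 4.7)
The plan is to chain two applications of the standard restriction principle along the tower $\cH_\aff \subset \cH_{\aff,\chi} \subset \cH$. Recall that both inclusions were just noted to be free on either side, so for a right module $M$ over the larger algebra, any projective resolution of $M$ restricts to a projective resolution over the smaller. This lets me push finiteness of projective dimension downward from $\cH$ all the way to $\cH_\aff$; the only nontrivial task is peeling off ``$\otimes \tau$'' at the end and recognizing $\chi$ as a direct summand of what remains.

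First, freeness of $\cH$ over $\cH_{\aff,\chi}$ gives
$$\pd_{\cH_{\aff,\chi}}(\mm|_{\cH_{\aff,\chi}}) \leq \pd_{\cH}(\mm) < \infty.$$
Next, I would show that $\chi \otimes_{\fpb} \tau$ is a direct summand of $\mm|_{\cH_{\aff,\chi}}$. Fix a set $\{\widehat\omega\}$ of lifts in $\tOmega$ of a system of representatives of $\Omega/\Omega_\chi$, normalized so that the trivial coset lifts to $1$. Using that $\cH$ is free as a right $\cH_{\aff,\chi}$-module with basis $\{\T_{\widehat\omega}\}$, one gets an $\fpb$-vector space decomposition
$$\mm = \bigoplus_{\omega \in \Omega/\Omega_\chi} (\chi \otimes \tau) \otimes \T_{\widehat\omega}.$$
For this to upgrade to a decomposition of right $\cH_{\aff,\chi}$-modules, I need conjugation by each $\T_{\widehat\omega}$ to preserve $\cH_{\aff,\chi}$, i.e. $\tOmega_\chi$ must be normal in $\tOmega$. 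This is where the abelian-ness of $\Omega$ recalled in Section \ref{notation} enters: since $\Omega$ is abelian, the stabilizer $\Omega_\chi$ (for the conjugation action of $\Omega$ on characters of $\cH_\aff$) is automatically normal, and $\tOmega_\chi$, being the preimage of $\Omega_\chi$ in $\tOmega$, is therefore also normal. The $\omega = 1$ summand is then isomorphic to $\chi \otimes \tau$ itself as a right $\cH_{\aff,\chi}$-module, so $\pd_{\cH_{\aff,\chi}}(\chi \otimes \tau) < \infty$.

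Finally I would descend from $\cH_{\aff,\chi}$ to $\cH_\aff$ in exactly the same way: freeness of $\cH_{\aff,\chi}$ as a right $\cH_\aff$-module gives $\pd_{\cH_\aff}((\chi \otimes \tau)|_{\cH_\aff}) < \infty$. But by definition of the $\cH_{\aff,\chi}$-action on $\chi \otimes \tau$, the subalgebra $\cH_\aff$ acts on this module by the scalar character $\chi$, so $(\chi \otimes \tau)|_{\cH_\aff} \cong \chi^{\oplus \dim \tau}$. Since $\dim \tau < \infty$, $\chi$ is a direct summand of this, whence $\pd_{\cH_\aff}(\chi) < \infty$. The main obstacle is the middle step — specifically verifying that $\chi \otimes \tau$ is a genuine direct summand (rather than merely a subquotient) of $\mm|_{\cH_{\aff,\chi}}$; this reduces to the normality of $\tOmega_\chi$ in $\tOmega$, which in turn is forced by $\Omega$ being abelian. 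Everything else is a routine application of restriction along a free extension.
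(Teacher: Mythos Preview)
Your argument is correct. The paper's proof reaches the same conclusion by a slightly different route: rather than passing through the intermediate algebra $\cH_{\aff,\chi}$, it restricts $\mm$ all the way down to $\cH_\aff$ in one step, invoking the decomposition $\mm|_{\cH_\aff}\cong\big(\bigoplus_{\widetilde\omega\in\tOmega_\chi\backslash\tOmega}\chi^{\widetilde\omega}\big)^{\oplus\dim\tau}$ from \cite{vigneras:hecke3}, Proposition~6.17, and then applies Eckmann--Shapiro to conclude. Your two-step descent via $\cH_{\aff,\chi}$ is more self-contained---you extract the needed direct-summand property from the normality of $\tOmega_\chi$ in $\tOmega$ (forced by $\Omega$ abelian) rather than citing the full decomposition---while the paper's version is shorter because it outsources that structural fact. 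The underlying mechanism (descend along free extensions and locate $\chi$ as a direct summand of the restriction) is the same in both.
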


\begin{proof}  By the proof of \cite[Prop. 6.17]{vigneras:hecke3}, we have 
$$\mm|_{\cH_\aff}\cong \left(\bigoplus_{\widetilde{\omega}\in \tOmega_\chi\backslash\tOmega}\chi^{\widetilde{\omega}}\right)^{\oplus\dim_{\fpb}(\tau)},$$ 
where $\chi^{\widetilde{\omega}}$ denotes the character of $\cH_{\aff}$ given by first conjugating an element by $\T_{\widetilde{\omega}}$ and then applying $\chi$.  Since $\cH$ is free over $\cH_{\aff}$, the Eckmann--Shapiro lemma
$$\Ext_{\cH}^i\left(\mm,\Hom_{\cH}(\cH_{\aff}, \nn)\right)\cong \bigoplus_{\widetilde{\omega}\in \tOmega_\chi\backslash\tOmega}\Ext_{\cH_{\aff}}^i(\chi^{\widetilde{\omega}},\nn)^{\oplus\dim_{\fpb}(\tau)}$$
gives the claim.
\end{proof}

Our next goal will be to determine which supersingular characters $\chi$ of $\cH_\aff$ have finite projective dimension.  We require a bit of notation.  Given a character $\xi:T(k_F)\longrightarrow \fpb^\times$, we let $e_\xi\in \cH$ denote the associated idempotent:
\begin{equation}\label{idem}
e_\xi := |T(k_F)|^{-1}\sum_{t\in T(k_F)}\xi(t)\T_{t^{-1}}.
\end{equation}
For $w\in W$, we let $w.\xi$ denote the left action of $W$ on $\xi$ by conjugating the argument (note that the action factors through the projection $W\longtwoheadrightarrow W_0$), and let $e_{w.\xi}$ denote the corresponding idempotent.

We first consider several examples which will arise below.

\begin{ex}\label{degA1}
Let $\cA$ denote the unital four-dimensional algebra over $\fpb$ generated by two primitive orthogonal idempotents $e_1\neq 0,1$ and $e_2 := 1 - e_1$, and an element $\T$ which satisfies
$$e_1\T = \T e_2,~ e_2\T = \T e_1,~\T^2 = 0.$$
The algebra $\cA$ decomposes into principal indecomposable (right) modules as 
$$\cA = e_1\cA \oplus e_2\cA,$$
and if we let $\sigma_j$ denote the (one-dimensional) socle of $e_j\cA$, then $e_j\cA$ is a nonsplit extension of $\sigma_{3 - j}$ by $\sigma_j$.  Splicing these two short exact sequences together gives an infinite projective resolution
$$\ldots \longrightarrow e_{3 - j}\cA\longrightarrow e_{j}\cA\longrightarrow e_{3 - j}\cA\longrightarrow\sigma_j\longrightarrow 0,$$
which may be used to compute
\begin{eqnarray*}
\dim_{\fpb}\left(\Ext^i_{\cA}(\sigma_j, \sigma_j)\right) & = & \begin{cases}1 & \textnormal{if $i$ is even,}\\ 0 & \textnormal{if $i$ is odd,}\end{cases}\\
\dim_{\fpb}\left(\Ext^i_{\cA}(\sigma_j, \sigma_{3 - j})\right) & = & \begin{cases}0 & \textnormal{if $i$ is even,}\\ 1 & \textnormal{if $i$ is odd.}\end{cases}
\end{eqnarray*}
Therefore both $\sigma_1$ and $\sigma_2$ are of infinite projective dimension.  
\end{ex}

\begin{ex}\label{degA1v2}
Let $\cB$ denote the two-dimensional algebra $\fpb[X]/(X^2)$, and let $\sigma$ denote the (one-dimensional) socle of $\cB$.  Then $\sigma$ is the unique simple $\cB$-module, and $\cB$ is a nonsplit extension of $\sigma$ by $\sigma$.  Splicing this short exact sequence with itself gives an infinite projective resolution
$$\ldots \longrightarrow \cB\longrightarrow \cB \longrightarrow \cB\longrightarrow \sigma\longrightarrow 0,$$
which may be used to compute
$$\dim_{\fpb}\left(\Ext_{\cB}^i(\sigma,\sigma)\right) = 1~\textnormal{for all $i\geq 0.$}$$
Therefore $\sigma$ is of infinite projective dimension.  
\end{ex}

\begin{ex}\label{0hecke}
Let $\cC$ denote a $0$-Hecke algebra over $\fpb$ corresponding to an irreducible root system of rank 2 (that is, a $0$-Hecke algebra of type $A_2$, $B_2$ or $G_2$).  The algebra $\cC$ is generated by two elements $\T$ and $\T'$, subject to the relations
$$\T^2 = -\T,~(\T')^2 = -\T',~\underbrace{\T\T'\T\cdots}_{\textnormal{$m$ times}} = \underbrace{\T'\T\T'\cdots}_{\textnormal{$m$ times}},$$
where $m = 3,4,$ or $6$.  By \cite{norton}, the algebra $\cC$ has 4 simple modules, each one-dimensional, given by the characters
$$\T\longmapsto \varepsilon,\qquad \T'\longmapsto\varepsilon',$$
with $\varepsilon,\varepsilon'\in \{0,-1\}$.  Moreover, Theorems 5.1 and 5.2 of \emph{loc. cit.} imply that the only simple modules which are projective are the trivial and sign characters (i.e., corresponding to $(\varepsilon,\varepsilon') = (0,0)$ and $(-1,-1)$, respectively).  Since $\cC$ is a Frobenius algebra, the remaining simple modules must have infinite projective dimension (cf. Remark \ref{projinjdim}).  
\end{ex}

\begin{lemma}\label{redtodegA1}
Let $\chi$ be a supersingular character of $\cH_\aff$, parametrized by the pair $(\xi,J)$.  If $\pd_{\cH_\aff}(\chi) < \infty$, then $S_\xi = S$.
\end{lemma}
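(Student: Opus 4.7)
I argue the contrapositive: assume $S_\xi\neq S$ and show $\pd_{\cH_\aff}(\chi) = \infty$. Fix $s_\alpha\in S\smallsetminus S_\xi$ and let $\cF\subset\overline{C}$ be the codimension-$1$ facet with stabilizer $\langle s_\alpha\rangle$ in $W_\aff$; by the comments preceding Lemma 4.20 of \cite{ollivierschneider}, $\cH_\cF$ is free over $\fpb$ with basis $\{\T_{\tw}\}_{\tw\in\tW_\cF}$ and is generated as an $\fpb$-algebra by $\fpb[T(k_F)]$ together with $\T_{\widehat{s_\alpha}}$, subject to $\T_{\widehat{s_\alpha}}^2 = \T_{\widehat{s_\alpha}}z$ where $z := \sum_{x\in k_F^\times}\T_{\alpha^\vee(x)}$. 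By the analog of Lemma \ref{redtofacets} recorded immediately after its proof (which requires no hypothesis on $|\Omega_\tor|$), it suffices to show that $\chi|_{\cH_\cF}$ is not projective over $\cH_\cF$.

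The essential reduction is a block decomposition. Since $|T(k_F)|$ is prime to $p$, the idempotents $\{e_{\xi'}\}$ of \eqref{idem} attached to the characters of $T(k_F)$ form a complete set of orthogonal idempotents in $\cH_\cF$. A short computation with the braid relations gives $e_{\xi'}\T_{\widehat{s_\alpha}} = \T_{\widehat{s_\alpha}}e_{s_\alpha.\xi'}$, so the sum $f := \sum_{\xi'\in\{\xi,\,s_\alpha.\xi\}}e_{\xi'}$ is central in $\cH_\cF$ and induces a ring direct product $\cH_\cF = f\cH_\cF \times (1-f)\cH_\cF$. Since $\chi|_{\cH_\cF}$ is supported on the factor $f\cH_\cF$, one has $\pd_{\cH_\cF}(\chi|_{\cH_\cF}) = \pd_{f\cH_\cF}(\chi|_{f\cH_\cF})$, and it suffices to show this is infinite.

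The key analytic input is that $s_\alpha\notin S_\xi$ forces $\xi\circ\alpha^\vee$ to be a nontrivial character of the cyclic group $k_F^\times$, so $\xi(z) = \sum_{x\in k_F^\times}\xi(\alpha^\vee(x)) = 0$; similarly $(s_\alpha.\xi)(z) = 0$ via the substitution $x\mapsto x^{-1}$ and the identity $s_\alpha(\alpha^\vee(x)) = \alpha^\vee(x^{-1})$ (which, by reindexing, also shows that $z$ commutes with $\T_{\widehat{s_\alpha}}$ inside $\cH_\cF$). I now split on the orbit of $\xi$. If $s_\alpha.\xi\neq \xi$, the block $f\cH_\cF$ is four-dimensional with basis $\{e_\xi,\,e_{s_\alpha.\xi},\,e_\xi\T_{\widehat{s_\alpha}},\,e_{s_\alpha.\xi}\T_{\widehat{s_\alpha}}\}$, and the off-diagonal element $\T := f\T_{\widehat{s_\alpha}}$ satisfies $\T^2 = f\T_{\widehat{s_\alpha}}^2 = (fz)\T_{\widehat{s_\alpha}} = 0$; comparing relations identifies $f\cH_\cF$ with the algebra $\cA$ of Example \ref{degA1}, and $\chi|_{f\cH_\cF}$ with one of the socles $\sigma_j$, which has infinite projective dimension. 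If instead $s_\alpha.\xi = \xi$, then $f = e_\xi$ and $e_\xi\cH_\cF$ is two-dimensional with $(e_\xi\T_{\widehat{s_\alpha}})^2 = \xi(z)\,e_\xi\T_{\widehat{s_\alpha}} = 0$, so $e_\xi\cH_\cF \cong \fpb[X]/(X^2) = \cB$, and $\chi|_{e_\xi\cH_\cF}$ is its unique simple module, of infinite projective dimension by Example \ref{degA1v2}.

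The only step with any subtlety is verifying that the relevant off-diagonal element squares to zero inside the block; once one has the commutation of $z$ past $\T_{\widehat{s_\alpha}}$ and the vanishing of the character sums $\xi(z)$ and $(s_\alpha.\xi)(z)$, both cases reduce cleanly to Examples \ref{degA1} and \ref{degA1v2}. Notice that supersingularity of $\chi$ plays no role: the argument works for any character of $\cH_\aff$ whose parameter $\xi$ satisfies $S_\xi\neq S$.
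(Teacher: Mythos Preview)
Your proof is correct and follows essentially the same route as the paper: contrapositive, restriction to the codimension-$1$ facet $\cF$ fixed by a chosen $s_\alpha\in S\smallsetminus S_\xi$, passage to the block cut out by $e_\xi$ (or $e_\xi + e_{s_\alpha.\xi}$), and identification of that block with the algebras of Examples \ref{degA1} and \ref{degA1v2}. The paper's proof is terser---it simply asserts the block identifications and leaves the verification of $\T^2 = 0$ (i.e., the vanishing of the character sums $\xi(z)$ and $(s_\alpha.\xi)(z)$, and the centrality of $z$) implicit---whereas you supply these details; your closing remark that supersingularity is not used is also accurate.
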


\begin{proof}
We may assume $S\neq \emptyset$, and suppose that $S_\xi\neq S$.  Choose $s\in S\smallsetminus S_\xi$, and let $\cF$ denote the codimension 1 facet of $C$ which is fixed by $s$.  Consider first the case $s.\xi\neq \xi$.  The action of $\cH_{\cF}$ on $\chi$ factors through the block $(e_\xi + e_{s.\xi})\cH_{\cF}$, and the latter algebra is of the form considered in Example \ref{degA1} (take $e_1 = e_\xi, e_2 = e_{s.\xi}, \T = (e_\xi + e_{s.\xi})\T_{\widehat{s}}$).  Since $(e_\xi + e_{s.\xi})\cH_{\cF}$ is a block of $\cH_{\cF}$, we obtain
$$\Ext_{\cH_{\cF}}^i\left(\chi|_{\cH_{\cF}}, \chi|_{\cH_{\cF}}\right) = \Ext_{(e_{\xi} + e_{s.\xi})\cH_{\cF}}^i\left(\chi|_{(e_{\xi} + e_{s.\xi})\cH_{\cF}}, \chi|_{(e_{\xi} + e_{s.\xi})\cH_{\cF}}\right)\neq 0$$
for infinitely many $i$.  Therefore $\pd_{\cH_{\cF}}(\chi|_{\cH_{\cF}}) = \infty$, and Lemma \ref{redtofacets} gives $\pd_{\cH_{\aff}}(\chi) = \infty$.

Assume now that $s.\xi = \xi$.  Then the action of $\cH_{\cF}$ factors through the block $e_\xi\cH_{\cF}$, and the latter algebra is of the form considered in Example \ref{degA1v2} (take $e_\xi$ as the unit and $X = e_\xi\T_{\widehat{s}}$).  Once again, $e_\xi\cH_{\cF}$ is a block of $\cH_{\cF}$, and
$$\Ext_{\cH_{\cF}}^i\left(\chi|_{\cH_{\cF}}, \chi|_{\cH_{\cF}}\right) = \Ext_{e_{\xi}\cH_{\cF}}^i\left(\chi|_{e_{\xi}\cH_{\cF}}, \chi|_{e_{\xi}\cH_{\cF}}\right)\neq 0$$
for infinitely many $i$.  As above, we conclude that $\pd_{\cH_{\aff}}(\chi) = \infty$.  
\end{proof}

\begin{lemma}\label{redto0hecke}
Let $\chi$ be a supersingular character of $\cH_{\aff}$, parametrized by the pair $(\xi,J)$, and assume $S_\xi = S$.  If $\pd_{\cH_{\aff}}(\chi) < \infty$, then the root system of $G$ is of type $A_1\times \cdots \times A_1$ (possibly empty product).  
\end{lemma}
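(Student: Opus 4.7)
The plan is to prove the contrapositive by adapting the block strategy of Lemma \ref{redtodegA1}: we locate a codimension-$2$ facet $\cF$ at which the action of $\cH_\cF$ on $\chi$ factors through a block isomorphic to one of the rank-$2$ $0$-Hecke algebras $\cC$ of Example \ref{0hecke}, with $\chi$ corresponding to a character that is neither twisted-trivial nor twisted-sign, hence of infinite projective dimension.

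Suppose some irreducible component $\Pi'\subset\Pi$ has $|\Pi'|\geq 2$, and let $S'\subset S$ be the corresponding set of simple affine reflections (generating an affine Weyl group of type $\widetilde{X}_r$ with $r\geq 2$). Supersingularity of $\chi$ forces its restriction to the irreducible component of $\cH_\aff$ attached to $\Pi'$ to differ from any twist of the trivial or sign character, so the values $\chi(\T_{\widehat{s}})\in\{0,-1\}$ for $s\in S'$ are not all equal. Because the affine Dynkin diagram of $\widetilde{X}_r$ is connected and (since $r\geq 2$) carries no $\infty$-labelled edges, a standard two-coloring argument produces two adjacent nodes $s_1,s_2\in S'$ with $\chi(\T_{\widehat{s}_1})\neq\chi(\T_{\widehat{s}_2})$; the order $m$ of $s_1s_2$ then lies in $\{3,4,6\}$.

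Let $\cF\subset\overline{C}$ be the codimension-$2$ facet fixed by $s_1$ and $s_2$, so that $W_\cF=\langle s_1,s_2\rangle$ is dihedral of order $2m$. Because $S_\xi=S$ forces $s.\xi=\xi$ for every $s\in S$, the idempotent $e_\xi$ is central in $\cH_\cF$, and $e_\xi\cH_\cF$ is a block of dimension $|W_\cF|=2m$. Setting $\bar{\T}_i:=e_\xi\T_{\widehat{s}_i}$, the quadratic relations of $\cH_\aff$ together with $\sum_{x\in k_F^\times}\xi(\alpha_i^\vee(x))=|k_F^\times|=-1$ in $\fpb$ give $\bar{\T}_i^2=-\bar{\T}_i$, while the affine braid identity for the lifts of \cite{ollivierschneider}, Section 4.8, combined with $S_\xi=S$, descends to the $0$-Hecke braid relations $\bar{\T}_1\bar{\T}_2\cdots=\bar{\T}_2\bar{\T}_1\cdots$ ($m$ factors on each side). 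A dimension count then identifies $e_\xi\cH_\cF$ with the $0$-Hecke algebra $\cC$ of Example \ref{0hecke} via $\bar{\T}_i\leftrightarrow\T_i$.

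Under this identification, $\chi|_{e_\xi\cH_\cF}$ corresponds to the character of $\cC$ with values $(\chi(\T_{\widehat{s}_1}),\chi(\T_{\widehat{s}_2}))\in\{(0,-1),(-1,0)\}$, which is neither trivial nor sign; Example \ref{0hecke} therefore yields $\pd_{e_\xi\cH_\cF}(\chi)=\infty$. Since $e_\xi\cH_\cF$ is a block of $\cH_\cF$, this upgrades to $\pd_{\cH_\cF}(\chi|_{\cH_\cF})=\infty$, and the affine analogue of Lemma \ref{redtofacets} forces $\pd_{\cH_\aff}(\chi)=\infty$, contradicting the hypothesis. The main delicate point is the braid step: one must check that the $T(k_F)$-correction potentially arising when passing between the reduced expressions $\widehat{s}_1\widehat{s}_2\cdots$ and $\widehat{s}_2\widehat{s}_1\cdots$ in $\tW_\aff$ lies in the kernel of $\xi$, which is precisely what ensures that $e_\xi\cH_\cF$ really is the undeformed $0$-Hecke algebra of Example \ref{0hecke} rather than a twisted variant.
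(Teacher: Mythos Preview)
Your argument is essentially the paper's: find two adjacent simple affine reflections on a non-$\widetilde{A}_1$ component with distinct $\chi$-values, pass to the codimension-$2$ facet they fix, and apply Example \ref{0hecke} to the rank-$2$ $0$-Hecke algebra $e_\xi\cH_\cF$. One small imprecision: $e_\xi\cH_\cF$ is in general a \emph{product} of blocks of $\cH_\cF$ rather than a single block (the trivial and sign characters split off as their own blocks), as the paper notes, but since $e_\xi$ is still a central idempotent this does not affect the Ext comparison or the conclusion.
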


\begin{proof}
Assume that the root system of $G$ is not of type $A_1\times\cdots \times A_1$ (so that, in particular, the semisimple rank of $G$ is at least 2).  Then there exists an irreducible component of the affine Dynkin diagram of $G$ which is not of type $\widetilde{A}_1$.  The character $\chi$ corresponds to a labeling of the vertices of this component, with the vertex corresponding to the simple affine root $\alpha$ being labeled by $\chi(\T_{\widehat{s_\alpha}})\in \{0,-1\}$.  Since $\chi$ is supersingular and $S_\xi = S$, there must exist two adjacent vertices with distinct labels.  Let $\cF$ denote the codimension 2 facet of $C$ which is fixed by the simple reflections corresponding to these two vertices.  By assumption, the action of $\cH_{\cF}$ on $\chi$ factors through the algebra $e_\xi\cH_{\cF}$, and the latter algebra is of the form considered in Example \ref{0hecke} (this requires the assumption that the chosen irreducible component is not of type $\widetilde{A}_1$).  By construction of the facet $\cF$, $\chi|_{e_\xi\cH_{\cF}}$ is neither the trivial nor the sign character, and since $e_\xi\cH_{\cF}$ is a product of blocks of $\cH_{\cF}$, we obtain $\pd_{\cH_{\cF}}(\chi|_{\cH_{\cF}}) = \infty$.  Lemma \ref{redtofacets} then gives $\pd_{\cH_{\aff}}(\chi) = \infty$.  
\end{proof}

We may now determine when a supersingular module has finite projective dimension.

\begin{thm}\label{ssing}
Let $\mm = (\chi\otimes_{\fpb}\tau)\otimes_{\cH_{\aff,\chi}}\cH$ be a simple supersingular $\cH$-module, and assume $\chi$ is parametrized by $(\xi,J)$.  Then $\pd_{\cH}(\mm) < \infty$ if and only if the root system of $G$ is of type $A_1\times \cdots\times A_1$ (possibly empty product) and $S_\xi = S$. 
\end{thm}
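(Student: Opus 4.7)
The plan has two directions. The \emph{only if} direction (i.e., $\pd_\cH(\mm) < \infty$ implies the stated conditions on $G$ and $\xi$) is an immediate consequence of chaining Lemmas \ref{redtoHaff}, \ref{redtodegA1}, and \ref{redto0hecke}: if $\pd_\cH(\mm) < \infty$, then $\pd_{\cH_\aff}(\chi) < \infty$, which forces $S_\xi = S$, and then forces the root system of $G$ to be of type $A_1 \times \cdots \times A_1$.

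For the \emph{if} direction, assume the root system of $G$ is of type $A_1 \times \cdots \times A_1$ and that $S_\xi = S$. By Lemma \ref{redtofacets} it is enough to verify that $\pd_{\cH_\cF^\dagger}(\mm|_{\cH_\cF^\dagger}) < \infty$ for every facet $\cF \subset \overline{C}$. The central observation is that the hypothesis $S_\xi = S$ forces $\xi \circ \alpha^\vee \equiv 1$ for every simple root $\alpha$; since $W_0$ is generated by simple reflections acting on $T(k_F)$ through coroots, we obtain $w.\xi = \xi$ for all $w \in W_0$, and since $\tOmega$ acts on $T(k_F)$ via its image in $W_0$, also $\widetilde{\omega}.\xi = \xi$ for every $\widetilde{\omega} \in \tOmega$. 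Consequently $e_\xi$ is a central idempotent of $\cH_\cF^\dagger$, and the action on $\mm|_{\cH_\cF^\dagger}$ factors through the block $e_\xi \cH_\cF^\dagger$.

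Next, compute $e_\xi \cH_\cF$ explicitly. The identity $e_\xi \sum_{x \in k_F^\times}\T_{\alpha^\vee(x)} = (q-1)e_\xi = -e_\xi$ combined with the quadratic relation yields $(e_\xi\T_{\widehat{s_\alpha}})^2 = -e_\xi\T_{\widehat{s_\alpha}}$ for each $s_\alpha \in W_\cF$; because the root system has no irreducible component of rank at least two, distinct simple affine reflections in $W_\cF$ come from distinct irreducible components of the affine Dynkin diagram and therefore commute, so
\[
e_\xi \cH_\cF \;\cong\; \bigotimes_{s_\alpha \in W_\cF} \fpb[X_\alpha]/\bigl(X_\alpha(X_\alpha + 1)\bigr),
\]
a commutative semisimple algebra. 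In particular $\mm|_{\cH_\cF}$ is projective over $\cH_\cF$.

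The main obstacle is to pass from projectivity over $\cH_\cF$ to finite projective dimension over $\cH_\cF^\dagger$. Lemma \ref{daggerproj} accomplishes this under $p \nmid |\Omega_\tor|$, but the theorem makes no such assumption (and it can genuinely fail, e.g., for $G = \textnormal{PGL}_2^n$ in characteristic $p = 2$). The plan is to exploit the specific structure of $\mm$: since every $\cH_\aff$-summand $\chi^{\widetilde{\omega}}$ of $\mm$ has the same $\xi$-parameter and so restricts into the semisimple block $e_\xi\cH_\cF$, one can describe $\mm|_{e_\xi\cH_\cF^\dagger}$ as an induced module from a stabilizer subalgebra acting on a projective $e_\xi\cH_\cF$-module. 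The skew-Laurent argument of Lemma \ref{daggerproj} handles the $\Omega_{\cF,\free}\cong \bbZ^{r_Z}$ factor unconditionally, while the induced-module description controls the $\Omega_{\cF,\tor}$ contribution without requiring $p \nmid |\Omega_{\cF,\tor}|$. Combining these observations with Lemma \ref{redtofacets} completes the proof.
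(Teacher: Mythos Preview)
Your ``only if'' direction is correct and matches the paper. In the ``if'' direction you correctly establish that $e_\xi$ is central and that $e_\xi\cH_\cF$ is semisimple, so $\mm|_{\cH_\cF}$ is projective for every $\cF\subset\overline{C}$. The gap is the last paragraph: you acknowledge that Lemma~\ref{daggerproj} is unavailable without $p\nmid|\Omega_\tor|$, and then assert that ``the induced-module description controls the $\Omega_{\cF,\tor}$ contribution,'' but you never say which stabilizer subalgebra you induce from, nor why the resulting module over it has finite projective dimension. Any such Mackey-style description of $\mm|_{\cH_\cF^\dagger}$ hinges on knowing the stabilizer of $\chi$ inside $\Omega_\cF$, i.e.\ on knowing $\Omega_\chi\cap\Omega_\cF$; without that, the sketch is not a proof.

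The paper avoids the $\cH_\cF^\dagger$ route entirely. It first uses the $\cH_\aff$-version of Lemma~\ref{redtofacets} (no $\Omega$ enters) together with the semisimplicity of $e_\xi\cH_\cF$ to get $\pd_{\cH_\aff}(\chi)<\infty$. The crucial extra step you are missing is then an explicit computation of $\tOmega_\chi$: under the hypotheses, supersingularity forces $\chi(\T_{\widehat{s_\alpha}})\neq\chi(\T_{\widehat{s_\alpha'}})$ for every $\alpha\in\Pi$, and since any $\omega\in\Omega$ preserves each pair $\{s_\alpha,s_\alpha'\}$, an $\widetilde\omega\in\tOmega_\chi$ must fix each $s_\alpha$, hence commute with $W_\aff$, hence lie in $Z/(Z\cap I)$. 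Thus $\tOmega_\chi$ is generated by $T(k_F)$ and the image of $Z/(Z\cap I(1))$, so $\cH_{\aff,\chi}$ is an iterated Laurent polynomial ring over $\cH_\aff$ in $r_Z$ variables. Now the skew-Laurent bound gives $\pd_{\cH_{\aff,\chi}}(\chi\otimes\tau)<\infty$, and Eckmann--Shapiro (using freeness of $\cH$ over $\cH_{\aff,\chi}$) transfers this to $\pd_\cH(\mm)<\infty$. This bypasses $\Omega_\tor$ altogether, because $\Omega_\chi$ turns out to be torsion-free. If you want to salvage your facet-by-facet approach, you would still need this computation of $\tOmega_\chi$ to identify your ``stabilizer subalgebra'' and verify it is a skew-Laurent extension of $\cH_\cF$.
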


Note that when the product $A_1\times \cdots \times A_1$ is empty, we have $G = T$ and the condition $S_\xi = S$ is vacuous.

\begin{proof}
Lemmas \ref{redtoHaff}, \ref{redtodegA1}, and \ref{redto0hecke} give the desired conditions on $G$ and $S_\xi$.  We prove the other implication.

Assume that the root system of $G$ is of type $A_1\times\cdots\times A_1$ and $S_\xi = S$.  Given any facet $\cF\in \mathscr{F}_i^\aff$, the second assumption implies that the action of $\cH_{\cF}$ on $\chi$ factors through $e_\xi\cH_{\cF}$, which is a $0$-Hecke algebra of type $A_1\times\cdots\times A_1$ and is therefore semisimple.  By Lemma \ref{redtofacets}, we conclude that $\pd_{\cH_{\aff}}(\chi) < \infty$.

We claim that $\tOmega_\chi$ is the subgroup generated by $T(k_F)$ and (the image of) $Z/(Z\cap I(1))$.  Note first that the set $S$ admits a partition into commuting subsets
$$S = \bigsqcup_{\alpha\in \Pi}\left\{s_{\alpha},~s_{\alpha}\alpha^\vee(\varpi) =: s_\alpha'\right\},$$
where $\varpi$ is a fixed choice of uniformizer of $F$.  The group $\Omega$ acts on $S$ by conjugation, and writing an element of $\Omega$ with respect to the decomposition $W = W_0\ltimes \Lambda$ shows that $\Omega$ acts on each set $\{s_{\alpha},~s_{\alpha}'\}$.  Next, by definition of supersingularity, the values $\chi(\T_{\widehat{s_\alpha}}), \chi(\T_{\widehat{s_\alpha'}})\in\{0,-1\}$ are distinct for every choice of $\alpha\in\Pi$.  Therefore, if $\widetilde{\omega}\in \tOmega_\chi$, we have
$$\chi(\T_{\widehat{s_\alpha}}) = \chi^{\widetilde{\omega}}(\T_{\widehat{s_\alpha}}) = \chi(\T_{\widetilde{\omega}\widehat{s_\alpha}\widetilde{\omega}^{-1}}) \quad\textnormal{and}\quad \chi(\T_{\widehat{s_\alpha'}}) = \chi^{\widetilde{\omega}}(\T_{\widehat{s_\alpha'}}) = \chi(\T_{\widetilde{\omega}\widehat{s_\alpha'}\widetilde{\omega}^{-1}})$$
for all $\alpha\in \Pi$, which implies $\widetilde{\omega}\widehat{s_\alpha}\widetilde{\omega}^{-1} = \widehat{s_\alpha}t_\alpha$ and $\widetilde{\omega}\widehat{s_\alpha'}\widetilde{\omega}^{-1} = \widehat{s_\alpha'}t_\alpha'$ for some $t_\alpha, t_\alpha'\in T(k_F)$ by the two comments above.  Letting $\omega\in \Omega$ denote the image of $\widetilde{\omega}$ under projection, the previous sentence implies that $\omega$ commutes with all of $W_\aff$, and therefore must lie in $Z/(Z\cap I)$.  This gives the claim.

By choosing a splitting of
$$1\longrightarrow T(k_F)\longrightarrow \tOmega_\chi\longrightarrow Z/(Z\cap I)\cong \bbZ^{\oplus r_Z}\longrightarrow 1,$$
we see that $\cH_{\aff,\chi}$ is a Laurent polynomial algebra over $\cH_{\aff}$, with basis given by the elements $\{\T_{\widehat{z}}\}$, where $\widehat{z}$ is in the image of the splitting.  \cite[Prop. 7.5.2]{mcconnellrobson} now implies
\begin{eqnarray*}
\pd_{\cH_{\aff,\chi}}\left(\chi\otimes_{\fpb}\tau\right) & \leq & \pd_{\cH_{\aff}}\left((\chi\otimes_{\fpb}\tau)|_{\cH_{\aff}}\right) + r_Z\\
 & = & \pd_{\cH_{\aff}}\left(\chi^{\oplus\dim_{\fpb}(\tau)}\right) + r_Z < \infty.
\end{eqnarray*}
Finally, by the Eckmann--Shapiro lemma, we get
$$\Ext_{\cH}^i\left(\mm,\nn\right)\cong \Ext_{\cH_{\aff,\chi}}^i\left(\chi\otimes_{\fpb}\tau,\nn|_{\cH_{\aff,\chi}}\right),$$
which shows that $\pd_{\cH}(\mm) < \infty$.  
\end{proof}

\section{Parabolic coinduction}\label{sectionparabind}

We first show how to transfer information about projective dimension using parabolic coinduction.

\begin{lemma}\label{parabind}
Let $P = M\ltimes N$ denote a standard parabolic subgroup of $G$, and let $\nn$ be a right $\cH_M$-module.  Then we have $\pd_{\cH_M}(\nn) < \infty$ if and only if $\pd_{\cH}(I_M(\nn)) < \infty$.
\end{lemma}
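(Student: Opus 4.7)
The plan is to use the exact left adjoint $L_M \colon \cH\textnormal{-mod} \to \cH_M\textnormal{-mod}$ of $I_M$ constructed in Vign\'eras \cite{vigneras:hecke5}, combined with the Gorenstein property of $\cH$ and $\cH_M$ recorded in Remark \ref{projinjdim}, which will allow me to convert freely between finite projective and finite injective dimension.

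First I would observe that $I_M = \Hom_{\cH_M^-}(\cH, -)$ is exact: this follows from the fact that $\cH$ is free as a right $\cH_M^-$-module, a fact used implicitly in the proof of Lemma \ref{mackey} and established in \cite{abe:heckemods}. Consequently $L_M$, being left adjoint to an exact functor, preserves projectives; combined with its own exactness, the adjunction promotes to natural Ext isomorphisms
$$\Ext_{\cH}^i(\mm, I_M(\nn)) \cong \Ext_{\cH_M}^i(L_M(\mm), \nn)$$
for all right $\cH$-modules $\mm$, right $\cH_M$-modules $\nn$, and $i \geq 0$. For the ``only if'' direction, the hypothesis $\pd_{\cH_M}(\nn) < \infty$ gives $\id_{\cH_M}(\nn) < \infty$ by Remark \ref{projinjdim}, so the right-hand side of the above display vanishes in degrees larger than a fixed integer independent of $\mm$. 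Hence $\id_{\cH}(I_M(\nn)) < \infty$, and a second invocation of Remark \ref{projinjdim} upgrades this to $\pd_{\cH}(I_M(\nn)) < \infty$.

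For the converse, suppose $\pd_{\cH}(I_M(\nn)) < \infty$, so $\id_{\cH}(I_M(\nn)) < \infty$. The key additional ingredient is the fully faithfulness of $I_M$, i.e.\ the fact that the counit $L_M \circ I_M \Rightarrow \textnormal{id}_{\cH_M\textnormal{-mod}}$ is a natural isomorphism, which I expect to extract from the explicit description of $L_M$ in \cite{vigneras:hecke5}. Granting this, every right $\cH_M$-module $\vv$ satisfies $\vv \cong L_M(I_M(\vv))$, and so
$$\Ext_{\cH_M}^i(\vv, \nn) \cong \Ext_{\cH}^i(I_M(\vv), I_M(\nn)) = 0$$
in high degrees uniformly in $\vv$. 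A final appeal to Remark \ref{projinjdim} yields $\pd_{\cH_M}(\nn) < \infty$.

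The hard part is the verification that the counit of the adjunction is a natural isomorphism; this is the only non-formal input. Should the explicit formula in \cite{vigneras:hecke5} not make this transparent, a fallback is to check essential surjectivity of $L_M$ in the weaker sense that every $\cH_M$-module embeds into (or is a retract of) one of the form $L_M(\mm)$, which again suffices by the Gorenstein-style equivalence of finite projective and finite injective dimension.
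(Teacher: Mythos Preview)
Your proposal is correct and follows essentially the same route as the paper: both use the exact left adjoint $L_M$ from \cite{vigneras:hecke5} to promote the adjunction to an isomorphism $\Ext^i_{\cH}(\mm, I_M(\nn)) \cong \Ext^i_{\cH_M}(L_M(\mm), \nn)$, and then invoke $L_M\circ I_M \cong \textnormal{id}$ for the converse direction. The paper confirms your expectation about the counit by appealing to the explicit description of $L_M$ in \cite{vigneras:hecke5} together with \cite{abe:heckemods}, Proposition~4.12.
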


\begin{proof}
Remark \ref{projinjdim} implies that it suffices to prove the claim for injective dimensions.  By \cite[Prop. 4.1]{vigneras:hecke5} the parabolic coinduction functor $I_{M}(-)$ from the category of right $\cH_M$-modules to the category of right $\cH$-modules admits a left adjoint, which we denote by $L_M(-)$ (N.B.: in \emph{loc. cit.}, the functors $I_M(-)$ and $L_M(-)$ are denoted $\bbI_{\cH_M}^{\cH}(-)$ and $\bbL_{\cH_M}^{\cH}(-)$, respectively).  Moreover, Sections 3 and 4 of \emph{loc. cit.} show that $L_M(-)$ is given by localization at a central non-zero divisor of $\cH_M^-$, and is therefore exact.  Hence, we obtain
$$\Ext^i_{\cH}\left(\mm, I_{M}(\nn)\right)\cong \Ext^i_{\cH_M}\left(L_M(\mm), \nn\right),$$
which shows that $\id_{\cH_M}(\nn) < \infty$ implies $\id_{\cH}(I_M(\nn)) < \infty$.

Now let $\vv$ be an arbitrary $\cH_M$-module.  Using the explicit description of the functors $I_M(-)$ and $L_M(-)$, we see that $L_M(I_M(\vv))\cong \vv$ (see also \cite[Prop. 4.12]{abe:heckemods}).  This gives
$$\Ext^i_{\cH}\left(I_M(\vv), I_{M}(\nn)\right)\cong \Ext^i_{\cH_M}\left(\vv, \nn\right),$$
which shows that $\id_{\cH}(I_M(\nn))< \infty$ implies $\id_{\cH_M}(\nn) < \infty$.  
\end{proof}

\begin{remark}
Let $\psi:T\longrightarrow \fpb^\times$ be a smooth character of $T$, and use the same letter to denote the associated right $\cH_T$-module.  Then $\psi$ is naturally a module over the block $e_{\psi|_{T(k_F)}}\cH_T\cong \fpb[X_1^{\pm 1},\ldots, X_{r_{\textnormal{ss}} + r_Z}^{\pm 1}]$.  This algebra has global dimension $r_{\textnormal{ss}} + r_Z$ (\cite[Thm. 7.5.3(iv)]{mcconnellrobson}), and the above lemma implies that $\pd_{\cH}(I_T(\psi)) < \infty$.  By using a Koszul resolution of the $\cH_T$-module $\psi$, we easily obtain
$$\dim_{\fpb}\left(\Ext_{\cH_T}^{i}(\psi,\psi)\right) = \binom{r_{\textnormal{ss}} + r_Z}{i},$$
and therefore the proof above shows
$$\Ext_{\cH}^{r_{\textnormal{ss}} + r_Z}\left(I_T(\psi),I_T(\psi)\right)\neq 0.$$
Hence $r_{\textnormal{ss}} + r_Z \leq \pd_{\cH}(I_T(\psi)) < \infty$.  Using Remark \ref{projinjdim}, this shows that the bound on the self-injective dimension of $\cH$ obtained in \cite{ollivierschneider} is sharp, i.e., $\id_{\cH}(\cH) = r_{\textnormal{ss}} + r_Z$.  
\end{remark}

Before we proceed, we require a simple lemma.

\begin{lemma}\label{pi1s}
Let $M$ be a standard Levi subgroup of $G$, and let $\Omega_M \cong \Omega_{M,\tor}\times\Omega_{M,\free}$ denote the length 0 subgroup of $W_M$ (relative to the length function on $W_M$).  If $p\nmid|\Omega_{\tor}|$, then $p\nmid|\Omega_{M,\tor}|$.  
\end{lemma}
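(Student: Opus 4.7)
My plan is to identify both $\Omega$ and $\Omega_M$ as quotients of the cocharacter lattice $X_*(T)$ by appropriate coroot lattices, and then to observe that the inclusion of coroot lattices induces a surjection $\Omega_M \twoheadrightarrow \Omega$ with torsion-free kernel.

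Concretely, via the map $\nu$ we have an identification $\Lambda \cong X_*(T)$. Combined with the decomposition $W = W_0 \ltimes \Lambda$ and the standard identification $W_\aff = W_0 \ltimes Q^\vee$ (where $Q^\vee \subset X_*(T)$ denotes the coroot lattice of $G$, freely generated by the simple coroots $\{\alpha^\vee : \alpha \in \Pi\}$), this will give
\[
\Omega = W/W_\aff \cong X_*(T)/Q^\vee.
\]
Applying exactly the same argument inside the Iwahori--Weyl group of $M$---using $W_M = W_{M,0} \ltimes \Lambda$ and $W_{M,\aff} = W_{M,0} \ltimes Q_M^\vee$, where $Q_M^\vee$ is the coroot lattice of $M$ generated by $\{\alpha^\vee : \alpha \in \Pi_M\}$, and noting that $\Omega_M$ is by definition the length $0$ subgroup of $W_M$ relative to $\ell_M$---will yield
\[
\Omega_M = W_M/W_{M,\aff} \cong X_*(T)/Q_M^\vee.
\]

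The decisive point is that $\Pi_M \subset \Pi$ realizes the $\bbZ$-basis of $Q_M^\vee$ as a subset of the $\bbZ$-basis of $Q^\vee$, so $Q_M^\vee$ is a direct summand of $Q^\vee$ with complement of rank $|\Pi \smallsetminus \Pi_M|$. In particular $Q^\vee/Q_M^\vee$ is free abelian. The short exact sequence
\[
0 \longrightarrow Q^\vee/Q_M^\vee \longrightarrow \Omega_M \longrightarrow \Omega \longrightarrow 0
\]
therefore has torsion-free kernel, so the surjection $\Omega_M \twoheadrightarrow \Omega$ restricts to an injection $\Omega_{M,\tor} \hookrightarrow \Omega_\tor$. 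It will then follow that $|\Omega_{M,\tor}|$ divides $|\Omega_\tor|$, and the hypothesis $p \nmid |\Omega_\tor|$ yields $p \nmid |\Omega_{M,\tor}|$.

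Since the argument is essentially a formal calculation once the Iwahori--Weyl group decompositions for $G$ and $M$ are in place, there is no real obstacle; the only step requiring care is verifying that $W_\aff$ and $W_{M,\aff}$ are genuine semidirect products of the respective finite Weyl groups with the respective coroot lattices, but this is standard in the split case.
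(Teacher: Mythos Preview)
Your proof is correct and follows essentially the same approach as the paper: both identify $\Omega \cong X_*(T)/Q^\vee$ and $\Omega_M \cong X_*(T)/Q_M^\vee$, obtain the surjection $\Omega_M \twoheadrightarrow \Omega$, and observe that it is injective on torsion. Your version is in fact more explicit than the paper's, which simply asserts that the surjection is ``easily seen to be injective when restricted to $\Omega_{M,\tor}$''; you spell out the reason by noting that $Q_M^\vee$ is a direct summand of $Q^\vee$, so the kernel $Q^\vee/Q_M^\vee$ is free.
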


\begin{proof}
By \cite[\S 1.1 and Prop. 1.10]{borovoi} (see also \cite[Prop. 3.36]{vigneras:hecke1}), the group $\Omega$ is isomorphic to the quotient of $X_*(T)$ by the subgroup generated by the coroots of $G$ (and likewise for $\Omega_M$).  Therefore, we have a surjection $\Omega_M\longtwoheadrightarrow \Omega$, which is easily seen to be injective when restricted to $\Omega_{M,\tor}$.  The result follows.  
\end{proof}

\begin{propn}\label{extn}
Assume $p\nmid|\Omega_{\tor}|$.  Let $P = M\ltimes N$ be a standard parabolic subgroup of $G$, and let $\nn$ be a simple supersingular right $\cH_M$-module.  Let $Q = L\ltimes V$ denote another standard parabolic subgroup such that $P\subset Q\subset P(\nn)$.  Then $\pd_{\cH_M}(\nn) < \infty$ if and only if $\pd_{\cH_L}(\nn^{e_L}) < \infty$.
\end{propn}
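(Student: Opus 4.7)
We prove the proposition by induction on $r := |\Pi_L \setminus \Pi_M|$. The base case $r = 0$ gives $L = M$ and $\nn^{e_L} = \nn$, so the statement is trivial. For the inductive step, we choose $\alpha \in \Pi_L \setminus \Pi_M$ and let $L'$ denote the standard Levi subgroup with $\Pi_{L'} = \Pi_L \setminus \{\alpha\}$, so that $M \subset L' \subsetneq L$ and $\Pi_{L'} \subset \Pi(\nn)$. By the inductive hypothesis applied to the pair $(M,L')$ we have $\pd_{\cH_{L'}}(\nn^{e_{L'}}) < \infty$ if and only if $\pd_{\cH_M}(\nn) < \infty$, and it remains to bridge $\cH_{L'}$ and $\cH_L$.

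For the forward implication, the main tool is Lemma \ref{cech} applied with $L$ playing the role of $G$. Since $\Pi_L \subset \Pi(\nn)$, the ``$M(\nn)$'' relative to $L$ equals $L$ itself, so when applied to the simple $\cH_L$-module $I^L(M,\nn,L')$ (and using that $|\Pi_L \setminus \Pi_{L'}| = 1$), the \v{C}ech complex collapses to the short exact sequence
$$
0 \longrightarrow \nn^{e_L} \longrightarrow I^L_{L'}(\nn^{e_{L'}}) \longrightarrow I^L(M,\nn,L') \longrightarrow 0
$$
of $\cH_L$-modules, where $I^L_{L'}$ denotes parabolic coinduction within $L$. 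By Lemma \ref{parabind} applied within $L$, the middle term has finite projective dimension over $\cH_L$ iff $\nn^{e_{L'}}$ does over $\cH_{L'}$, which by induction is equivalent to $\pd_{\cH_M}(\nn) < \infty$. The main obstacle is the quotient $I^L(M,\nn,L')$: this is a distinct simple $\cH_L$-module (the ``other'' extension of $\nn$ to $L$ in Abe's classification), and to exploit the two-out-of-three property for finite projective dimension in the above sequence one must also establish that $I^L(M,\nn,L')$ has finite projective dimension precisely when $\pd_{\cH_M}(\nn) < \infty$. We plan to handle this by strengthening the inductive statement so that it controls all the simple $\cH_L$-modules $I^L(M,\nn,L'')$ for $M \subset L'' \subset L$ simultaneously, propagating finiteness across this family by means of the longer \v{C}ech complexes of Lemma \ref{cech} (with varying $Q' = L''$) and repeated applications of Lemma \ref{parabind}.

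For the converse implication, a cleaner route uses Eckmann--Shapiro. Within $L$, the analog of the map $j_M^-$ embeds the $L$-negative part of $\cH_M$ into $\cH_L$, over which $\cH_L$ is free by a relative Bernstein decomposition (compare the proof of Lemma \ref{mackey}). By Abe's Proposition 4.16, the restriction of $\nn^{e_L}$ to this subalgebra coincides with the corresponding restriction of $\nn$, so Eckmann--Shapiro yields finite projective dimension of $\nn$ over this subalgebra. The central localization from \cite{vigneras:hecke5} (which underlies the exactness of the left adjoint $L_M(-)$ to parabolic coinduction used in Lemma \ref{parabind}) then transfers finiteness back to $\cH_M$. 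The most delicate step throughout is the simultaneous control of the modules $I^L(M,\nn,L'')$ required for the forward direction.
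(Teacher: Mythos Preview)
Your inductive approach via the \v{C}ech resolution has a genuine gap in the forward direction that cannot be closed with the tools you list. Suppose $\pd_{\cH_M}(\nn)<\infty$ and fix $L$ with $|\Pi_L\smallsetminus\Pi_M|=r$. Your strengthened inductive hypothesis, together with Lemma~\ref{parabind}, does give $\pd_{\cH_L}\big(I^L_{L'''}(\nn^{e_{L'''}})\big)<\infty$ for every proper $L'''\subsetneq L$. Feeding this into the \v{C}ech complex of Lemma~\ref{cech} for $I^L(M,\nn,L'')$, every term except the top one $\nn^{e_L}$ has finite projective dimension, so two-out-of-three yields only the \emph{equivalence}
\[
\pd_{\cH_L}(\nn^{e_L})<\infty \iff \pd_{\cH_L}\big(I^L(M,\nn,L'')\big)<\infty
\]
for each $L''\subsetneq L$. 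Nothing in your scheme pins down either side: the only module whose projective dimension you control directly is the full coinduction $I^L_M(\nn)$, and finiteness of $\pd$ for a module says nothing about $\pd$ of its Jordan--H\"older factors. The strengthened induction therefore does not close.

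The paper's proof avoids this by using the structural input of Theorem~\ref{ssing}: if $\pd_{\cH_M}(\nn)<\infty$ then the root system of $M$ is of type $A_1\times\cdots\times A_1$ and $S_{M,\xi}=S_M$. Combined with the orthogonal decomposition $\Pi_L=\Pi_M\sqcup\Pi_2$ coming from $\Pi_L\subset\Pi(\nn)$, this lets one check \emph{directly} at each facet $\cF$ that $e_\xi\cH_\cF\cong\cH_1\otimes_{\fpb}\cH_2$ with $\nn^{e_L}|_{e_\xi\cH_\cF}\cong \nn|_{e_\xi\cH_{M,\cF_M}}\otimes_{\fpb}\chi_{\textnormal{triv},2}$, both tensor factors projective; Lemma~\ref{redtofacets} then gives $\pd_{\cH_L}(\nn^{e_L})<\infty$. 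Your argument is missing precisely this kind of independent anchor. For the converse, your Eckmann--Shapiro/localization sketch is a different route from the paper's facet-transfer argument and may be made to work, but the claim that $\cH_L$ is free over $j_M^-(\cH_M^{L,-})$ is not what Lemma~\ref{mackey} proves (that lemma concerns $\cH_x$ over $\cH_M^-\cap\cH_x$), and would need its own justification.
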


\begin{proof}
By Lemma \ref{pi1s}, we may suppose that $Q = G$.  The set $\Pi$ then admits an orthogonal decomposition $\Pi = \Pi_M \sqcup \Pi_2$, where $\langle\beta,\alpha^\vee\rangle = 0$ for every $\beta\in \Pi_M, \alpha\in \Pi_2$, and such that $\T^M_{\alpha^\vee(x)}$ acts trivially on $\nn$ for every $x\in F^\times, \alpha\in \Pi_2$.  Moreover, we obtain a partition $S = S_M \sqcup S_2$ on the set of affine reflections (cf. \cite[\S 4.4]{abe:heckemods}).  Since this partition comes from the orthogonal decomposition of $\Pi$, the elements of $S_M$ commute with the elements of $S_2$.

Assume first that $\pd_{\cH_M}(\nn) = \infty$.  By Lemmas \ref{redtofacets} and \ref{pi1s}, there must exist some facet $\cF_M$ in the semisimple Bruhat--Tits building of $M$ (in the closure of the chamber corresponding to $I\cap M$) such that $\nn|_{\cH_{M,\cF_M}}$ is not projective.  Let $S_{\cF_M}\subset S_M$ denote the set of simple reflections which fix $\cF_M$ pointwise, so that $\cH_{M,\cF_M}$ is generated by $\{\T^M_{\widehat{s}}\}_{s\in S_{\cF_M}}$ and $\{\T^M_t\}_{t\in T(k_F)}$.  Now view $S_{\cF_M}$ as a subset of $S$, and let $\cF$ denote the facet of $C$ fixed pointwise by $S_{\cF_M}$.  The algebra $\cH_{\cF}$ is generated by $\{\T_{\widehat{s}}\}_{s\in S_{\cF_M}}$ and $\{\T_t\}_{t\in T(k_F)}$.  One easily checks that the elements $\widehat{s}$ for $s\in S_{\cF_M}$ are all $M$-negative (see \cite[\S 4.1]{abe:heckemods}; this uses the fact that we have an \emph{orthogonal} decomposition).  
Therefore, we have $\cH_{M,\cF_M}\subset \cH_M^-$, and the map $j_M^-:\cH_M^-\longhookrightarrow \cH$ induces an algebra isomorphism $\cH_{M,\cF_M}\stackrel{\sim}{\longrightarrow} \cH_{\cF}$.  By \cite[Prop. 4.16]{abe:heckemods}, the algebra $\cH_{\cF}$ acts on $\nn^{e_G}$ through the isomorphism $j_M^-$, and we conclude that $\nn^{e_G}|_{\cH_{\cF}}$ is not projective.  Hence $\pd_{\cH}(\nn^{e_G}) = \infty$.

Assume now that $\pd_{\cH_{M}}(\nn) < \infty$, and write $\nn = (\chi\otimes_{\fpb}\tau)\otimes_{\cH_{M,\aff,\chi}}\cH_M$ as in Section \ref{ssingmods}.  By Theorem \ref{ssing}, we have $S_{M,\xi} = S_M$, where $(\xi,J)$ parametrizes the character $\chi$ of $\cH_{M,\aff}$.  Since $\nn$ extends to $\cH$ by assumption, we have $S_\xi = S$ (cf. \cite[Prop. 4.16]{abe:heckemods}).

Fix a facet $\cF\subset \overline{C}$, and let $S_{\cF}\subset S$ denote the set of simple reflections which fix $\cF$ pointwise.  By definition of the extension $\nn^{e_G}$, the action of $\cH_{\cF}$ on $\nn^{e_G}$ factors through $e_\xi\cH_{\cF}$.  The latter algebra decomposes as a tensor product
$$e_\xi\cH_{\cF}\cong \cH_1\otimes_{\fpb}\cH_2,$$
where $\cH_1$ is the algebra generated by $\{e_\xi\T_{\widehat{s}}\}_{s\in S_M\cap S_{\cF}}$ and $\cH_2$ is the algebra generated by $\{e_\xi\T_{\widehat{s}}\}_{s\in S_2\cap S_{\cF}}$.  As above, $\cH_1$ is isomorphic via $j_M^-$ to a finite Hecke algebra of the form $e_\xi\cH_{M,\cF_M}$, where $\cF_M$ is the facet in the semisimple Bruhat--Tits building of $M$ fixed by $S_M\cap S_\cF$.  Again using \cite[Prop. 4.16]{abe:heckemods}, the $e_\xi\cH_{\cF}$-module $\nn^{e_G}|_{e_\xi\cH_{\cF}}$ decomposes as a(n external) tensor product
$$\nn^{e_G}|_{e_\xi\cH_{\cF}} \cong \nn|_{e_\xi\cH_{M,\cF_M}}\otimes_{\fpb}\chi_{\textnormal{triv},2},$$
where $\chi_{\textnormal{triv},2}$ denotes the trivial character of $\cH_2$.  Since $\pd_{\cH_M}(\nn) < \infty$, the restriction of $\nn$ to $\cH_{M,\cF_M}$ must be projective (this again uses Lemmas \ref{redtofacets} and \ref{pi1s}).  Since $\chi_{\textnormal{triv},2}$ is projective over $\cH_2$ (cf. \cite[Thm. 5.2]{norton}), we see that $\nn^{e_G}|_{\cH_{\cF}}$ is projective.  Using Lemma \ref{redtofacets} one final time gives $\pd_{\cH}(\nn^{e_G}) < \infty$.  
\end{proof}

We now arrive at our main result.

\begin{thm}\label{main}
Assume $p\nmid|\Omega_{\tor}|$.  Let $P = M\ltimes N$ be a standard parabolic subgroup of $G$, and let $\nn$ be a simple supersingular right $\cH_M$-module.  Let $Q = L\ltimes V$ denote another standard parabolic subgroup such that $P\subset Q\subset P(\nn)$.  Then $\pd_{\cH_M}(\nn) < \infty$ if and only if $\pd_{\cH}(I(P,\nn,Q)) < \infty$.  
\end{thm}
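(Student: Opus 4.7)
My plan is to split into the forward and backward implications, with the \v{C}ech resolution of Lemma \ref{cech} as the central tool in both cases.

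For the forward direction, suppose $\pd_{\cH_M}(\nn) < \infty$. Applying Proposition \ref{extn} to each standard Levi $L'$ with $M \subset L' \subset M(\nn)$ gives $\pd_{\cH_{L'}}(\nn^{e_{L'}}) < \infty$, and then Lemma \ref{parabind} yields $\pd_\cH(I_{L'}(\nn^{e_{L'}})) < \infty$. Every term of the \v{C}ech complex resolving $I(P,\nn,Q)$ is a finite direct sum of such modules, so has finite projective dimension. Splicing the bounded exact resolution into short exact sequences and iterating the standard inequality for projective dimension across each short exact sequence then gives $\pd_\cH(I(P,\nn,Q)) < \infty$.

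For the backward direction, I would first reduce to the case $M(\nn) = G$. Transitivity of parabolic coinduction yields $I^G_{L'}(\nn^{e_{L'}}) \cong I_{M(\nn)}^G(I_{L'}^{M(\nn)}(\nn^{e_{L'}}))$ for each $L \subset L' \subset M(\nn)$; since $\cH$ is free over $\cH_{M(\nn)}^-$ by Proposition 4.21 of \cite{ollivierschneider}, the functor $I_{M(\nn)}^G$ is exact. Applying it termwise to the \v{C}ech resolution of the analogous object constructed inside $\cH_{M(\nn)}$-modules identifies $I(P,\nn,Q)$ with $I_{M(\nn)}^G$ of that object, and Lemma \ref{parabind} then lets me replace $G$ by $M(\nn)$ throughout. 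So I may assume $\nn$ extends to a simple $\cH$-module $\nn^{e_G}$, which is precisely the top term $\cC_r$ of the \v{C}ech resolution. I would then proceed by reverse induction on $r := |\Pi \smallsetminus \Pi_L|$; the base case $r=0$ reduces to $I(P,\nn,Q)=\nn^{e_G}$ and follows from Proposition \ref{extn}. For the inductive step, combine the short exact sequence
\[ 0 \to K \to I_L(\nn^{e_L}) \to I(P,\nn,Q) \to 0, \qquad K := \sum_{Q \subsetneq Q' \subset G} I_{L'}(\nn^{e_{L'}}), \]
with the truncated \v{C}ech resolution $0 \to \nn^{e_G} \to \cC_{r-1} \to \cdots \to \cC_1 \to K \to 0$, and apply the inductive hypothesis to the subquotients $I(P,\nn,Q'')$ with $Q \subsetneq Q'' \subset G$.

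The main obstacle is this inductive step: propagating finiteness of $\pd_\cH(I(P,\nn,Q))$ back up to $\pd_\cH(\nn^{e_G})$. Working in the contrapositive, assume $\pd_{\cH_M}(\nn) = \infty$; then by Proposition \ref{extn} combined with Lemma \ref{parabind}, every $I_{L'}(\nn^{e_{L'}})$ in the \v{C}ech complex has infinite projective dimension, and by the inductive hypothesis so does every $I(P,\nn,Q'')$ with $Q'' \supsetneq Q$. The challenge is then to force $\pd_\cH(I(P,\nn,Q)) = \infty$ from this data. I would attack this through the hyper-Ext spectral sequence associated to the \v{C}ech resolution, with a carefully chosen test module $\vv$ (such as $\vv = \nn^{e_G}$, where $\Ext^q_\cH(\nn^{e_G},\nn^{e_G})$ is unbounded by Remark \ref{projinjdim} once $\pd_\cH(\nn^{e_G})=\infty$), controlling the differentials on the $E_r$-pages using the inductive hypothesis in order to guarantee a non-zero contribution to $\Ext^i_\cH(I(P,\nn,Q),\vv)$ for arbitrarily large $i$.
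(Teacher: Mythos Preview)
Your forward direction and the reduction to $P(\nn) = G$ are essentially the same as the paper's (the paper uses the hyper-Ext spectral sequence of the \v{C}ech resolution rather than splicing short exact sequences, but these are equivalent standard arguments).

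The genuine gap is in your backward direction. You correctly identify it as the ``main obstacle,'' but the proposed spectral-sequence attack does not close it. Knowing that every $I_{L'}(\nn^{e_{L'}})$ and every $I(P,\nn,Q'')$ with $Q''\supsetneq Q$ has infinite projective dimension gives you no control over the differentials on the $E_r$-pages: there is no general mechanism preventing the unbounded classes in $\Ext^*_\cH(\cC_i,\vv)$ from cancelling against one another. Concretely, from the short exact sequence $0\to K\to I_L(\nn^{e_L})\to I(P,\nn,Q)\to 0$, the hypothesis $\pd_\cH(I(P,\nn,Q))<\infty$ only forces $\Ext^j_\cH(K,\vv)\cong\Ext^j_\cH(I_L(\nn^{e_L}),\vv)$ for large $j$, which is perfectly compatible with both sides being nonzero. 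The inductive hypothesis on the subquotients of $K$ tells you these Ext groups are large, not that they fail to match.

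The paper bypasses this homological obstruction entirely with a direct, structural argument. Under the reduction $P(\nn)=G$, the decomposition $\Pi=\Pi_M\sqcup\Pi_2$ is \emph{orthogonal}, so $W_0^{L'}\subset W_{2,0}$ commutes with $S_M$. The paper shows that the vector-space isomorphism $I_{L'}(\nn^{e_{L'}})\cong\bigoplus_{v\in W_0^{L'}}\nn$ is in fact equivariant for the operators $\T_{\widehat{s}}$ ($s\in S_M$) and $\T_t$ ($t\in T(k_F)$). Now take a facet $\cF_M$ in the building of $M$ with $\nn|_{\cH_{M,\cF_M}}$ non-projective (Lemma~\ref{redtofacets}), and let $\cF\subset\overline{C}$ be the facet fixed by the same set $S_{\cF_M}\subset S_M\subset S$; then $j_M^-$ identifies $\cH_{M,\cF_M}\cong\cH_\cF$, and the equivariance above shows that $I(P,\nn,Q)|_{\cH_\cF}$ is a direct sum of copies of the non-projective module $\nn|_{\cH_{M,\cF_M}}$. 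Lemma~\ref{redtofacets} then gives $\pd_\cH(I(P,\nn,Q))=\infty$. This explicit restriction computation is the missing idea in your proposal.
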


\begin{proof}
Since parabolic coinduction is exact and transitive (cf. \cite[Cor. 1.10]{vigneras:hecke5}), we have 
$$I(P,\nn,Q) = I_{M(\nn)}\left(I_{\cH_{M(\nn)}}(P\cap M(\nn),\nn,Q\cap M(\nn))\right),$$
where $I_{\cH_{M(\nn)}}(P\cap M(\nn),\nn,Q\cap M(\nn))$ is a simple $\cH_{M(\nn)}$-module defined in the same manner as $I(P,\nn,Q)$.  Therefore, by Lemmas \ref{parabind} and \ref{pi1s} we may assume $P(\nn) = G$, so that $\Pi$ admits an orthogonal decomposition $\Pi = \Pi_M \sqcup \Pi_2$ and $S$ admits a partition into commuting subsets $S = S_M \sqcup S_2$ (as in Proposition \ref{extn}).

Assume first that $\pd_{\cH_M}(\nn) < \infty$.  By Lemma \ref{parabind} and the proposition above, we have $\pd_{\cH}(I_{L'}(\nn^{e_{L'}})) < \infty$ for every parabolic subgroup $Q' = L'\ltimes V'$ such that $P\subset Q' \subset G$.  Moreover, the \v{C}ech resolution of Lemma \ref{cech} gives rise to a hyper-Ext spectral sequence
$$E_1^{i,j} = \bigoplus_{\sub{L\subset L'\subset G}{|\Pi_{L'}\smallsetminus\Pi_L| = i}}\Ext_{\cH}^j\left(I_{L'}(\nn^{e_{L'}}),\vv\right)\Longrightarrow \Ext_{\cH}^{i + j}\left(I(P,\nn,Q),\vv\right).$$
Since $E_1^{i,j}$ vanishes for $i$ and $j$ sufficiently large (independent of $\vv$), we conclude $\pd_{\cH}(I(P,\nn,Q)) < \infty$.

Fix now a parabolic subgroup $Q' = L'\ltimes V'$ such that $P\subset Q'\subset G$ and consider the coinduced module $I_{L'}(\nn^{e_{L'}}) = \Hom_{\cH_{L'}^-}(\cH,\nn^{e_{L'}})$.  By \cite[Lem. 4.10]{abe:heckemods}, we have an isomorphism of vector spaces
\begin{eqnarray*}
I_{L'}(\nn^{e_{L'}}) & \stackrel{\sim}{\longrightarrow}  &  \bigoplus_{v\in W_0^{L'}}\nn\\
\varphi & \longmapsto & \left(\varphi(\T_{\widehat{v}})\right)_{v\in W_0^{L'}}. 
\end{eqnarray*}
In the above, if $v = s_{\alpha_1}\cdots s_{\alpha_k}$ is a reduced expression for $v\in W_0$ with $s_{\alpha_i}\in S\cap W_0$, we define $\widehat{v} := \widehat{s_{\alpha_1}}\cdots\widehat{s_{\alpha_k}}$; \cite[Props. 8.8.3 and 9.3.2]{springer} imply this element is well-defined.

The orthogonal decomposition of $\Pi$ implies that $W_0$ is a product of two finite Weyl groups $W_0 \cong W_{M,0}\times W_{2,0}$ (corresponding to the sets of generators $S_M\cap W_0$ and $S_2\cap W_0$, respectively).  This easily implies that the set $W_0^{L'}$ is contained in $W_{2,0}$, and therefore commutes with $S_M$.  Using \cite[Prop. 9.3.2]{springer} again, we see that in fact the elements $\widehat{v}$ and $\widehat{s}$ commute, where $v\in W_0^{L'}$ and $s\in S_M$.  This implies that the isomorphism above is equivariant for the operators $\{\T_{\widehat{s}}\}_{s\in S_M}$, where $\T_{\widehat{s}}$ acts on the right-hand side as $\T_{\widehat{s}}^M$ via $j_M^-$.  Moreover, by the fact that $\T_{\alpha^\vee(x)}^M$ acts trivially on $\nn$ for all $\alpha\in \Pi_2$ and $x\in k_F^\times$, we see that the actions of $\T_{t}$ on the left-hand side and $\T_t^M$ on the right-hand side agree, so that the isomorphism is equivariant for the operators $\{\T_t\}_{t\in T(k_F)}$.

Assume now that $\pd_{\cH_M}(\nn) = \infty$.  As in the proof of Proposition \ref{extn}, there exist facets $\cF_M$ in the semisimple Bruhat--Tits building of $M$ and $\cF\subset \overline{C}$ such that $\nn|_{\cH_{M,\cF_M}}$ is not projective and such that $j_M^-$ induces an algebra isomorphism $\cH_{M,\cF_M}\stackrel{\sim}{\longrightarrow}\cH_{\cF}$.  By the discussion of the preceding paragraph, the isomorphism $I_{L'}(\nn^{e_{L'}})\cong \bigoplus_{v\in W_0^{L'}} \nn$ is an isomorphism of $\cH_{\cF}$-modules, where $\cH_{\cF}$ acts on the right-hand side as $\cH_{M,\cF_M}$ via $j_M^-$.  Consequently, we see that $I(P,\nn,Q)|_{\cH_{\cF}}$ is isomorphic to a direct sum of copies of $\nn$ (where we again view $\nn$ as a $\cH_{\cF}$-module via $j_M^-$), and so $I(P,\nn,Q)|_{\cH_{\cF}}$ is not projective.  By Lemma \ref{redtofacets}, we conclude that $\pd_{\cH}(I(P,\nn,Q)) = \infty$.
\end{proof}

\begin{cor}\label{maincor}
Assume $p\nmid|\Omega_{\tor}|$, and let $\mm$ be a simple right $\cH$-module.  Write $\mm\cong I(P,\nn,Q)$, with $\nn \cong (\chi\otimes_{\fpb}\tau)\otimes_{\cH_{M,\aff,\chi}}\cH_M$ and $\chi$ parametrized by $(\xi,J)$.  Then the following are equivalent:
\begin{itemize}
\item $\pd_{\cH}(\mm) < \infty$;
\item $\pd_{\cH_M}(\nn) < \infty$;
\item the root system of $M$ is of type $A_1\times \cdots \times A_1$ (possibly empty product) and $S_{M,\xi} = S_M$.
\end{itemize}
Moreover, when $G$ is semisimple and $\pd_{\cH}(\mm) < \infty$, \eqref{resn} is a projective resolution of $\mm$, and $\pd_{\cH}(\mm) = r_{\textnormal{ss}}$.  
\end{cor}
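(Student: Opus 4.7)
The plan is to assemble the three equivalences from Theorems \ref{main} and \ref{ssing}, and to deduce the semisimple addendum from the finite-dimensionality lemmas of Section \ref{prelim}.

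First, the equivalence of the first two bullets is exactly the statement of Theorem \ref{main}. For the equivalence of the second and third, I would apply Theorem \ref{ssing} directly to the simple supersingular $\cH_M$-module $\nn$; to invoke it one must verify the analogous hypothesis $p\nmid|\Omega_{M,\tor}|$, which follows from the running assumption $p\nmid|\Omega_\tor|$ by Lemma \ref{pi1s}.

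For the semisimple addendum, I would argue as follows. When $G$ is semisimple we have $r_Z = 0$. Assuming $\pd_\cH(\mm) < \infty$, Lemma \ref{redtofacets} (together with $p\nmid|\Omega_\tor|$) yields that $\mm|_{\cH_\cF}$ is projective over $\cH_\cF$ for every facet $\cF\subset\overline{C}$. Lemma \ref{daggerproj}, applied with $r_Z = 0$, then upgrades this to projectivity of $\mm|_{\cH_\cF^\dagger}$ over $\cH_\cF^\dagger$, and Lemma \ref{twist} preserves projectivity under the orientation twist. Since $\cH$ is free as a right $\cH_\cF^\dagger$-module by Proposition 4.21 of \cite{ollivierschneider}, the induction functor $-\otimes_{\cH_\cF^\dagger}\cH$ preserves projectivity, so every term of \eqref{resn} is a projective $\cH$-module. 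Thus \eqref{resn} is a projective resolution of $\mm$ of length $r_{\textnormal{ss}}$, which immediately yields $\pd_\cH(\mm)\leq r_{\textnormal{ss}}$.

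The main obstacle is the sharpness assertion $\pd_\cH(\mm) = r_{\textnormal{ss}}$, for which I would mirror the strategy used at the end of the proof of Proposition \ref{Anprop} and invoke Corollary 6.12 of \cite{ollivierschneider}. The idea is to extract from the spectral sequence \eqref{ss} a nonvanishing top-degree Ext group, by showing that the term $E_1^{r_{\textnormal{ss}},0}$ associated with the chamber $C$ survives to $E_\infty$ and is nonzero for a suitable coefficient module. Verifying this in the full $I(P,\nn,Q)$ setting, rather than only for principal series of type $A_n$, is the delicate point; the orthogonal decomposition $\Pi = \Pi_M\sqcup \Pi_2$ arising when $P(\nn) = G$ (as in the proof of Theorem \ref{main}), together with the explicit action of $\cH_\cF$ on $\nn^{e_G}$, should reduce the computation to a tensor product of finite Hecke algebras where the sharpness is already accessible.
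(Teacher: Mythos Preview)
Your argument is correct and tracks the paper's proof closely. Two minor remarks. First, Theorem \ref{ssing} carries no hypothesis on $p$, so invoking Lemma \ref{pi1s} for the second equivalence is harmless but unnecessary. Second, for the sharpness assertion the paper simply cites Corollary 6.12 of \cite{ollivierschneider} exactly as in Proposition \ref{Anprop}; that result applies to any nonzero $\cH$-module with $G$ semisimple, so no additional verification tailored to $I(P,\nn,Q)$ is required, and the spectral-sequence analysis you sketch in your final paragraph is superfluous.
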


\begin{proof}
The first assertion follows from combining Theorems \ref{main} and \ref{ssing}.  Assuming that $G$ is semisimple gives $r_Z = 0$, and Lemma \ref{redtofacets} shows that $\pd_{\cH}(\mm)<\infty$ is equivalent to $\mm|_{\cH_{\cF}^\dagger}(\epsilon_{\cF})$ being projective for every facet $\cF\subset \overline{C}$.  Since induction preserves projectivity, we get that \eqref{resn} is a projective resolution, and \cite[Cor. 6.12]{ollivierschneider} gives the exact value of $\pd_{\cH}(\mm)$.  
\end{proof}

\begin{remark}
Since simple $\cH$-modules are finite-dimensional, they possess a central character.  Using this fact and a slightly stronger restriction on $p$ than in the corollary above, we can actually prove that $\pd_{\cH}(\mm) = r_{\textnormal{ss}} + r_{Z}$ whenever $\pd_{\cH}(\mm) < \infty$; see Proposition \ref{zcharprop} below.  
\end{remark}

\begin{remark}
Theorem \ref{main} gives a slightly different proof of Proposition \ref{Anprop}.  We have chosen to keep the proof of Proposition \ref{Anprop} intact, in the hopes that the techniques used therein (especially Lemma \ref{mackey}) may find application elsewhere.  
\end{remark}

\section{Complements}\label{comp}

\subsection{Iwahori--Hecke modules}\label{iwahori} 
  As a special case of the above results, we now discuss projective dimensions of Iwahori--Hecke modules.

Let $\cH'$ denote the \emph{Iwahori--Hecke algebra} over $\fpb$, defined with respect to the subgroup $I$ of $G$:
$$\cH' := \End_G\left(\cind_I^G(\bld)\right),$$
where $\bld$ now denotes the trivial $I$-module over $\fpb$.  Given an algebra related to $\cH$, we denote with a prime the analogously defined algebra for $\cH'$ (so that $\cH_M'$ denotes the Iwahori--Hecke algebra of a Levi subgroup $M$ with respect to $I\cap M$, $\cH_\cF'$ is the subalgebra of $\cH'$ defined by $\End_{\cP_\cF}(\cind_I^{\cP_\cF}(\bld))$, etc.).  Recall the (primitive) central idempotent $e_{\bld} \in \cH$ defined in equation \eqref{idem}:
$$e_{\bld} = |T(k_F)|^{-1}\sum_{t\in T(k_F)}\T_t.$$
Using $e_{\bld}$ we may identify $\cH'$ with the subalgebra $e_{\bld}\cH$ (which is a block of $\cH$).  Likewise, we make the identifications $\cH_M' = e_{\bld}\cH_M, \cH_{\cF}' = e_{\bld}\cH_{\cF}$, etc..  See \cite{vigneras:hecke1} for more details.

Now let $\mm$ denote a right $\cH$-module.  If $\mm\cdot e_{\bld} = \mm$, then we may naturally view $\mm$ as an $\cH'$-module, and conversely every $\cH'$-module arises in this way.  In particular, let $P = M\ltimes N$ be a standard parabolic subgroup, and $\nn := (\chi\otimes_{\fpb}\tau)\otimes_{\cH_{M,\aff,\chi}}\cH_M$ a simple supersingular right $\cH_M$-module.  One easily sees that the simple right $\cH$-module $\mm := I(P,\nn,Q)$ satisfies $\mm\cdot e_{\bld} = \mm$ if and only if $\nn\cdot e_{\bld} = \nn$, if and only if $\chi$ is parametrized by $(\bld, J)$ for some $J\subset S_M$.  Corollary \ref{maincor} now takes the following form, noting that $\pd_{\cH}(\mm) < \infty$ is equivalent to $\pd_{\cH'}(\mm) < \infty$ for right $\cH$-modules satisfying $\mm\cdot e_{\bld} = \mm$.

\begin{cor}\label{iwahoripd}
Assume $p\nmid|\Omega_{\tor}|$, and let $\mm$ be a simple right $\cH'$-module.  Write $\mm\cong I(P,\nn,Q)$, with $\nn \cong (\chi\otimes_{\fpb}\tau)\otimes_{\cH_{M,\aff,\chi}}\cH_M$ and $\chi$ parametrized by $(\bld,J)$.  Then the following are equivalent:
\begin{itemize}
\item $\pd_{\cH'}(\mm) < \infty$;
\item $\pd_{\cH_M'}(\nn) < \infty$;
\item the root system of $M$ is of type $A_1\times \cdots \times A_1$ (possibly empty product).
\end{itemize}
Moreover, when $G$ is semisimple and $\pd_{\cH'}(\mm) < \infty$, the ``$\cH'$ version'' of \eqref{resn} is a projective resolution of $\mm$, and $\pd_{\cH'}(\mm) = r_{\textnormal{ss}}$.  
\end{cor}

\subsection{Projective resolutions of $G$-representations}\label{represns}  
In this subsection, we take $p>2$, and let $G$ be equal to either $\textnormal{PGL}_2(\qp)$ or $\textnormal{SL}_2(\qp)$.  In this case (cf. \cite{ollivier:foncteur} and \cite{koziol:glnsln}), the category $\mathfrak{Mod}\textrm{-}\cH$ of right $\cH$-modules is equivalent to the category $\mathfrak{Rep}^{I(1)}_{\fpb}(G)$ of $\fpb$-representations of $G$ generated by their $I(1)$-invariant vectors.  Explicitly, this equivalence is given by the pair of adjoint functors
\begin{eqnarray*}
\mathfrak{Mod}\textrm{-}\cH & \cong & \mathfrak{Rep}^{I(1)}_{\fpb}(G)\\
\mm & \longmapsto & \mm\otimes_{\cH}\cind_{I(1)}^G(\bld)\\
\pi^{I(1)} & \longmapsfrom & \pi.
\end{eqnarray*}

Let $\pi$ be a smooth irreducible representation of $G$ which is either an irreducible subquotient of a principal series representation, or a supersingular representation which satisfies $\pi^I\neq 0$ (see \cite{breuil:gl2qp}, \cite{abdellatif:sl2qp}).  The nonzero vector space $\pi^{I(1)}$ then becomes a simple right $\cH$-module, and Corollary \ref{maincor} implies that it has projective dimension 1 over $\cH$ (for supersingular representations satisfying $\pi^I \neq 0$, we have $\pi^{I(1)} = \pi^I$, and thus we may instead apply Corollary \ref{iwahoripd}).  We note that, if $\pi'$ is an irreducible supersingular representation of $G$ for which the associated $\cH$-module $(\pi')^{I(1)}$ satisfies $S_\xi = \emptyset$, then $(\pi')^{I(1)}$ has infinite projective dimension in $\mathfrak{Mod}\textrm{-}\cH$, and consequently $\pi'$ has infinite projective dimension in $\mathfrak{Rep}^{I(1)}_{\fpb}(G)$.

Let $\pi$ be as above, and let $x$ and $x'$ denote the two vertices in the closure of the chamber $C$.   By Corollary \ref{maincor}, we obtain a projective resolution of $\pi^{I(1)}$ given by 
$$0\longrightarrow \pi^{I(1)}(\epsilon_C)\otimes_{\cH_C^\dagger}\cH\longrightarrow \pi^{I(1)}\otimes_{\cH_x}\cH\longrightarrow \pi^{I(1)}\longrightarrow 0$$
for $G = \textnormal{PGL}_2(\qp)$, and by
$$0\longrightarrow \pi^{I(1)}\otimes_{\cH_C}\cH\longrightarrow \left(\pi^{I(1)}\otimes_{\cH_x}\cH\right)\oplus \left(\pi^{I(1)}\otimes_{\cH_{x'}}\cH\right) \longrightarrow \pi^{I(1)}\longrightarrow 0$$
for $G = \textnormal{SL}_2(\qp)$ (here we identify the semisimple buildings of the groups $\textnormal{PGL}_2(\qp)$ and $\textnormal{SL}_2(\qp)$).  One easily checks that for any facet $\cF\subset \overline{C}$, we have a $G$-equivariant isomorphism 
$$\pi^{I(1)}(\epsilon_{\cF})\otimes_{\cH_{\cF}^\dagger}\cind_{I(1)}^G(\bld)\cong \cind_{\cP_{\cF}^\dagger}^G\left(\langle\cP_{\cF}^\dagger.\pi^{I(1)}\rangle\otimes_{\fpb}\epsilon_{\cF}\right),$$
where $\langle\cP_{\cF}^\dagger.\pi^{I(1)}\rangle$ denotes the $\cP_{\cF}^\dagger$-subrepresentation of $\pi$ generated by $\pi^{I(1)}$.  
Applying the equivalence of categories above, we thus obtain a resolution
\begin{equation}\label{pglres}
0\longrightarrow \cind_{\cP_{C}^\dagger}^G\left(\pi^{I(1)}\otimes_{\fpb}\epsilon_{C}\right)\longrightarrow \cind_{\cP_{x}}^G\left(\langle\cP_x.\pi^{I(1)}\rangle\right)\longrightarrow \pi\longrightarrow 0
\end{equation}
when $G = \textnormal{PGL}_2(\qp)$, and 
\begin{equation}\label{slres}
0\longrightarrow \cind_{I}^G\left(\pi^{I(1)}\right)\longrightarrow \cind_{\cP_x}^G\left(\langle\cP_x.\pi^{I(1)}\rangle\right)~ \oplus~ \cind_{\cP_{x'}}^G\left(\langle\cP_{x'}.\pi^{I(1)}\rangle\right) \longrightarrow \pi \longrightarrow 0 
\end{equation}
when $G = \textnormal{SL}_2(\qp)$ (note that in both cases, $\pi^{I(1)}$ is naturally a representation of $\cP_C^\dagger$).  Collecting everything gives the following result.

\begin{propn}
Let $G = \textnormal{PGL}_2(\qp)$ (resp. $G = \textnormal{SL}_2(\qp)$), with $p> 2$.  Let $\pi$ denote an irreducible subquotient of a principal series representation of $G$, or an irreducible supersingular representation of $G$ which satisfies $\pi^I\neq 0$.  Then \eqref{pglres} (resp. \eqref{slres}) is a projective resolution of $\pi$ in the abelian category $\mathfrak{Rep}^{I(1)}_{\fpb}(G)$.  
\end{propn}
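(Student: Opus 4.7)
The plan is to transport the projective resolution on the Hecke algebra side to the representation side via the equivalence of categories recalled at the start of Subsection \ref{represns}. First I would verify the input: in each case listed (irreducible subquotients of principal series, or irreducible supersingular representations with $\pi^I\neq 0$), the module $\pi^{I(1)}$ is a simple right $\cH$-module, and in fact by Abe's classification together with the explicit determination of simple modules for $\textnormal{PGL}_2(\qp)$ and $\textnormal{SL}_2(\qp)$, one can write $\pi^{I(1)}\cong I(P,\nn,Q)$ where either $M=T$ or $M=G$ has root system of type $A_1$. Since $p>2$ forces $p\nmid |\Omega_{\tor}|$ in both cases, Corollary \ref{maincor} applies: the root system of $M$ is $A_1\times\cdots\times A_1$ (possibly empty) and $S_{M,\xi}=S_M$ is automatic in this low rank situation, so $\pi^{I(1)}$ has finite projective dimension, and in fact the Ollivier--Schneider resolution \eqref{resn} is a projective resolution of $\pi^{I(1)}$ over $\cH$, with $\pd_{\cH}(\pi^{I(1)}) = r_{\textnormal{ss}} = 1$.

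Next I would write out \eqref{resn} explicitly in each of the two cases. For $G = \textnormal{PGL}_2(\qp)$, the group $G$ acts transitively on vertices of the tree, so $\mathscr{F}_0 = \{x\}$ and $\mathscr{F}_1 = \{C\}$, and \eqref{resn} becomes
\[
0\longrightarrow \pi^{I(1)}(\epsilon_C)\otimes_{\cH_C^\dagger}\cH \longrightarrow \pi^{I(1)}\otimes_{\cH_x^\dagger}\cH \longrightarrow \pi^{I(1)}\longrightarrow 0,
\]
and since $x$ is hyperspecial with $\cP_x = \cP_x^\dagger$ and $\epsilon_x$ trivial, the middle term simplifies to $\pi^{I(1)}\otimes_{\cH_x}\cH$. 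For $G=\textnormal{SL}_2(\qp)$, the two vertices $x,x'$ lie in distinct $G$-orbits and $\cP_C = \cP_C^\dagger$, so we recover the corresponding two-term resolution.

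The core step is to apply the functor $\mm\longmapsto \mm\otimes_{\cH}\cind_{I(1)}^G(\bld)$ to this resolution. This functor is part of an equivalence of categories between $\mathfrak{Mod}\textrm{-}\cH$ and $\mathfrak{Rep}^{I(1)}_{\fpb}(G)$, hence is exact and sends projectives to projectives (in the latter category). To identify the terms, I would establish the $G$-equivariant isomorphism
\[
\pi^{I(1)}(\epsilon_{\cF})\otimes_{\cH_{\cF}^\dagger}\cind_{I(1)}^G(\bld)\cong \cind_{\cP_{\cF}^\dagger}^G\left(\langle\cP_{\cF}^\dagger.\pi^{I(1)}\rangle\otimes_{\fpb}\epsilon_{\cF}\right),
\]
which follows from the transitivity of tensor products together with the decomposition $\cind_{I(1)}^G(\bld)\cong \cind_{\cP_\cF^\dagger}^G\left(\cind_{I(1)}^{\cP_\cF^\dagger}(\bld)\right)$ and the explicit description of $\langle\cP_\cF^\dagger.\pi^{I(1)}\rangle$ as $\pi^{I(1)}\otimes_{\cH_\cF^\dagger}\cind_{I(1)}^{\cP_\cF^\dagger}(\bld)$ (which uses that $\cP_\cF^\dagger/I(1)$ is a finite group of order prime to $p$ after we strip out the toral part, so the natural map is an isomorphism).

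The main obstacle, though ultimately mild, is justifying that these compactly induced representations are indeed projective in the subcategory $\mathfrak{Rep}^{I(1)}_{\fpb}(G)$ (not in the category of all smooth $G$-representations, where they are generally not projective). This is exactly the content of the equivalence of categories: since $\mm|_{\cH_\cF^\dagger}(\epsilon_\cF)\otimes_{\cH_\cF^\dagger}\cH$ is a projective $\cH$-module whenever the restriction to $\cH_\cF^\dagger$ is projective, and Corollary \ref{maincor} guarantees this projectivity in our situation, the equivalence transports projectivity faithfully into $\mathfrak{Rep}^{I(1)}_{\fpb}(G)$. Assembling these pieces yields the claimed resolutions \eqref{pglres} and \eqref{slres}.
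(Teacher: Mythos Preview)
Your approach matches the paper's: verify via Corollary \ref{maincor} that $\pi^{I(1)}$ has finite projective dimension so that \eqref{resn} is a projective resolution over $\cH$, write \eqref{resn} out explicitly for each of the two groups, and then transport it through the equivalence of categories using the displayed $G$-equivariant isomorphism.

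Two small corrections are in order, though neither breaks the argument. First, in the supersingular case (where $M=G$) the condition $S_{M,\xi}=S_M$ is not ``automatic in this low rank situation'': it is precisely the hypothesis $\pi^I\neq 0$ that forces $\xi=\bld$ and hence $S_\xi=S$, and without this hypothesis the conclusion would fail. Second, your parenthetical justification of the $G$-equivariant isomorphism is not right: when $\cF$ is a vertex, $I(1)$ is not normal in $\cP_\cF^\dagger$, so $\cP_\cF^\dagger/I(1)$ is not a group at all, and the finite reductive quotient $\cP_x/\cP_x(1)$ has order divisible by $p$ in any case. The paper simply asserts this isomorphism (``one easily checks''); a correct verification uses transitivity of compact induction $\cind_{I(1)}^G(\bld)\cong\cind_{\cP_\cF^\dagger}^G(\cind_{I(1)}^{\cP_\cF^\dagger}(\bld))$ together with the fact that the natural map $\pi^{I(1)}\otimes_{\cH_\cF^\dagger}\cind_{I(1)}^{\cP_\cF^\dagger}(\bld)\to\pi$ has image $\langle\cP_\cF^\dagger.\pi^{I(1)}\rangle$ and is injective because its source is already generated by its $I(1)$-invariants and the restriction of the map to $I(1)$-invariants is the identity on $\pi^{I(1)}$.
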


\begin{remark}
Let $\pi$ be equal to the trivial representation of $G$.  The terms in the resolutions \eqref{pglres} and \eqref{slres} take the form $\cind_{\cP_{\cF}^\dagger}^G(\epsilon_{\cF})$, and we have
\begin{eqnarray*}
\Hom_{G}\left(\cind_{\cP_{\cF}^\dagger}^G(\epsilon_{\cF}), \tau\right) & \cong & \Hom_{\cP_{\cF}^\dagger}(\epsilon_{\cF}, \tau|_{\cP_{\cF}^\dagger}) \\
 & \cong & \tau^{\cP_{\cF}^\dagger, \epsilon_{\cF}}\\
 & := & \left\{v\in \tau: g.v = \epsilon_{\cF}(g)v~\textnormal{for all}~g\in \cP_{\cF}^\dagger\right\},
\end{eqnarray*}
where $\tau$ is a smooth $G$-representation.  Since the coefficient field has characteristic $p$, the functor $\tau\longmapsto \tau^{\cP_{\cF}^\dagger, \epsilon_{\cF}}$ will not be exact in general, and therefore the resolutions \eqref{pglres} and \eqref{slres} will not give projective resolutions in the entire category $\mathfrak{Rep}_{\fpb}(G)$ of smooth $G$-representations.  
\end{remark}

\begin{remark}
By direct inspection, one easily checks that if $\pi$ is an irreducible subquotient of a principal series representation of $\textnormal{PGL}_2(\qp)$ or $\textnormal{SL}_2(\qp)$ and $y$ a vertex in the closure of $\overline{C}$, then 
$$\langle\cP_{y}.\pi^{I(1)}\rangle \cong \pi^{\cP_y(1)}$$
as representations of $\cP_y$, where $\cP_y(1)$ denotes the pro-$p$ radical of $\cP_y$.  Therefore, the projective resolutions \eqref{pglres} and \eqref{slres} take the form
$$0\longrightarrow \cind_{\cP_{C}^\dagger}^G\left(\pi^{I(1)}\otimes_{\fpb}\epsilon_{C}\right)\longrightarrow \cind_{\cP_{x}}^G\left(\pi^{\cP_x(1)}\right)\longrightarrow \pi\longrightarrow 0$$
when $G = \textnormal{PGL}_2(\qp)$, and 
$$0\longrightarrow \cind_{I}^G\left(\pi^{I(1)}\right)\longrightarrow \cind_{\cP_x}^G\left(\pi^{\cP_x(1)}\right)~ \oplus~ \cind_{\cP_{x'}}^G\left(\pi^{\cP_{x'}(1)}\right)\longrightarrow \pi \longrightarrow 0$$
when $G = \textnormal{SL}_2(\qp)$.  In other words, the representation $\pi$ is the $0^{\textnormal{th}}$ homology of the coefficient system denoted $\underline{\underline{\pi}}$ in \cite[\S II.2]{ss:reptheorysheaves}.  
\end{remark}

\subsection{Central characters}\label{zcharsection}

One can also ask about the behavior of the resolution \eqref{resn} when the module $\mm$ possesses a central character.  We will show that, by passing to an appropriate subcategory of modules with a fixed ``central character'' (in a sense to be made precise below), \eqref{resn} becomes a projective resolution whenever $\pd_{\cH}(\mm) < \infty$.

Recall that $Z$ denotes the connected center of $G$.  Let us fix a splitting of the short exact sequence
$$1\longrightarrow (Z\cap I)/(Z\cap I(1)) \longrightarrow Z/(Z\cap I(1))\longrightarrow Z/(Z\cap I)\cong \bbZ^{\oplus r_Z}\longrightarrow 1,$$
and for $z\in Z/(Z\cap I)$, we let $\widehat{z}\in Z/(Z\cap I(1)) \longhookrightarrow \tW$ denote its image under the splitting.  Since the elements of $Z/(Z\cap I(1))$ (considered as elements of $\tW$) have length zero, we see that the $\fpb$-vector space spanned by $\{\T_{\widehat{z}}\}_{z\in Z/(Z\cap I)}$ is a subalgebra of the center of $\cH$.  Denote this algebra by $\cY$, and note that $\cY\subset \cH_{\cF}^\dagger$ for every facet $\cF\subset \overline{C}$.

Fix now a character $\zeta:\cY\longrightarrow \fpb$, and define the quotient algebras
$$\cH^\zeta := \cH/\left(\T_{\widehat{z}} - \zeta(\T_{\widehat{z}})\right)_{z\in Z/(Z\cap I)}\cH,$$
$$\cH_{\cF}^{\dagger, \zeta} := \cH_{\cF}^{\dagger}/\left(\T_{\widehat{z}} - \zeta(\T_{\widehat{z}})\right)_{z\in Z/(Z\cap I)}\cH_{\cF}^\dagger,$$
where $\cF$ is a facet in the closure of $C$ (since $\cY$ is central, the ideals are two-sided).  For a fixed facet $\cF\subset\overline{C}$, we have 
$$W_\cF \cap (Z/(Z\cap I))\subset W_\aff \cap \Omega = \{1\},$$ 
and therefore $\tW_\cF\cap \{\widehat{z}\}_{z\in Z/(Z\cap I)} = \{1\}$.  This implies that the composition
$$\cH_{\cF}\longhookrightarrow \cH_{\cF}^{\dagger}\longtwoheadrightarrow \cH_{\cF}^{\dagger,\zeta}$$
is injective, and we may view $\cH_{\cF}$ as a subalgebra of $\cH_{\cF}^{\dagger,\zeta}$.  Similarly to Lemma \ref{daggerproj}, we see that $\cH_{\cF}^{\dagger,\zeta}$ is free of finite rank over $\cH_{\cF}$, with basis given by $\{\overline{\T_{\widehat{\omega}}}\}_{\omega\in \Omega_{\cF}/(Z/(Z\cap I))}$, so that $\cH_{\cF}^{\dagger,\zeta}$ has the structure of a crossed product algebra over $\cH_{\cF}$.

Now let $\mm$ be a right $\cH$-module on which $\cY$ acts by the character $\zeta$, so that $\mm$ is naturally a module over $\cH^\zeta$.  The inclusion $\cH_{\cF}^\dagger \subset \cH$ induces an inclusion $\cH_{\cF}^{\dagger,\zeta}\subset \cH^\zeta$ for every facet $\cF\subset \overline{C}$, which makes $\cH^\zeta$ into a free (left and right) $\cH_{\cF}^{\dagger,\zeta}$-module.  We easily see that the natural map $\cH\longtwoheadrightarrow \cH^\zeta$ induces an isomorphism of $\cH^\zeta$-modules
\begin{equation}\label{zchar}
\mm|_{\cH_{\cF}^\dagger}(\epsilon_{\cF})\otimes_{\cH_{\cF}^\dagger}\cH \cong \mm|_{\cH_{\cF}^{\dagger,\zeta}}(\epsilon_{\cF})\otimes_{\cH_{\cF}^{\dagger,\zeta}}\cH^\zeta.
\end{equation}
Viewing \eqref{resn} as a resolution of $\cH^\zeta$-modules, we obtain the following result.

\begin{propn}\label{zcharprop}
Assume $p\nmid |\Omega/(Z/(Z\cap I))|$, and let $\mm$ be a right $\cH$-module on which $\cY$ acts by a character $\zeta$.  Then $\pd_{\cH}(\mm) < \infty$ if and only if $\pd_{\cH^\zeta}(\mm) < \infty$.  In this case, \eqref{resn} is a resolution of $\mm$ by projective $\cH^\zeta$-modules, and we have $\pd_{\cH^\zeta}(\mm) = r_{\textnormal{ss}}$ and $\pd_{\cH}(\mm) = r_{\textnormal{ss}} + r_{Z}$.  
\end{propn}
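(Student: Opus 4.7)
The plan proceeds in three steps: establish the equivalence $\pd_{\cH}(\mm) < \infty \Leftrightarrow \pd_{\cH^\zeta}(\mm) < \infty$ facet-wise, use \eqref{zchar} to promote \eqref{resn} to a projective $\cH^\zeta$-resolution, and finally transfer the resulting dimension information back to $\cH$ via a Koszul-type argument on the regular sequence generating $\ker(\zeta) \subset \cY$. Note first that the hypothesis $p \nmid |\Omega/(Z/(Z\cap I))|$ implies both $p \nmid |\Omega_{\tor}|$ and $p \nmid |\Omega_{\cF}/(Z/(Z\cap I))|$ for every facet $\cF \subset \overline{C}$. As observed in the excerpt, $\cH_{\cF}^{\dagger,\zeta}$ is a crossed product of $\cH_{\cF}$ by the finite $p'$-group $\Omega_{\cF}/(Z/(Z\cap I))$, so Theorem 7.5.6(ii) of \cite{mcconnellrobson} gives $\pd_{\cH_{\cF}^{\dagger,\zeta}}(\mm|_{\cH_{\cF}^{\dagger,\zeta}}) = \pd_{\cH_{\cF}}(\mm|_{\cH_{\cF}})$. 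Combining this with Lemma \ref{redtofacets} for $\cH$ and with its verbatim analogue for $\cH^\zeta$ (which uses the freeness of $\cH^\zeta$ over $\cH_{\cF}^{\dagger,\zeta}$ already recorded in the excerpt) establishes the first equivalence, and further shows that finiteness is equivalent to projectivity of $\mm|_{\cH_{\cF}}$ for every $\cF \subset \overline{C}$.

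Assume henceforth that these projective dimensions are finite. The argument of Lemma \ref{twist} applies verbatim to show that $\mm|_{\cH_{\cF}^{\dagger,\zeta}}(\epsilon_{\cF})$ remains projective over $\cH_{\cF}^{\dagger,\zeta}$, and since induction preserves projectivity, every term of \eqref{resn}, reinterpreted via \eqref{zchar} as $\mm|_{\cH_{\cF}^{\dagger,\zeta}}(\epsilon_{\cF}) \otimes_{\cH_{\cF}^{\dagger,\zeta}} \cH^\zeta$, is projective over $\cH^\zeta$. This yields the upper bound $\pd_{\cH^\zeta}(\mm) \leq r_{\textnormal{ss}}$. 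For the matching lower bound $\pd_{\cH^\zeta}(\mm) \geq r_{\textnormal{ss}}$, I would adapt Corollary 6.12 of \cite{ollivierschneider}, which exhibits a test module $\nn$ making $\Ext^{r_{\textnormal{ss}} + r_Z}_{\cH}(\mm,\nn) \neq 0$ by locating a surviving entry of the spectral sequence \eqref{ss}; the analogous construction with $\nn$ chosen to have central character $\zeta$, applied to the $\cH^\zeta$-version of \eqref{ss}, should give $\Ext^{r_{\textnormal{ss}}}_{\cH^\zeta}(\mm,\nn) \neq 0$.

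For the final equality $\pd_{\cH}(\mm) = r_{\textnormal{ss}} + r_Z$, I would exploit that $\cY \cong \fpb[Z/(Z\cap I)]$ is a Laurent polynomial algebra in $r_Z$ central variables, with $\ker(\zeta)$ generated by the regular sequence $(\T_{\widehat{z}_i} - \zeta(\T_{\widehat{z}_i}))_{i = 1}^{r_Z}$ for a chosen basis $(z_i)$ of $Z/(Z\cap I)$. Since $\cY$ is central in $\cH$, the Koszul complex on this sequence furnishes a length-$r_Z$ resolution of $\cH^\zeta$ by free right $\cH$-modules; splicing it with the projective $\cH^\zeta$-resolution of $\mm$ from the previous step produces a projective $\cH$-resolution of $\mm$ of length $r_{\textnormal{ss}} + r_Z$, yielding the upper bound. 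The matching lower bound follows by chasing the top-degree Ext over $\cH^\zeta$ through the resulting bicomplex (equivalently, through the change-of-rings spectral sequence associated to $\cY \to \cH \to \cH^\zeta$). I expect the principal obstacle to be the sharpness statement in the second step: verifying that the Ollivier--Schneider nonvanishing argument goes through with a test module of prescribed central character requires a careful re-examination of their construction, whereas once sharpness over $\cH^\zeta$ is in hand, the Koszul passage back to $\cH$ is routine.
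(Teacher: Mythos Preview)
Your proposal is correct and follows essentially the same route as the paper: both reduce the equivalence to facet-level projectivity via the crossed product structure of $\cH_{\cF}^{\dagger,\zeta}$ over $\cH_{\cF}$ and Theorem~7.5.6(ii) of \cite{mcconnellrobson}, obtain the upper bound $\pd_{\cH^\zeta}(\mm)\le r_{\textnormal{ss}}$ from \eqref{resn} via \eqref{zchar}, establish sharpness by adapting Section~6 of \cite{ollivierschneider}, and then lift to $\pd_{\cH}(\mm)=r_{\textnormal{ss}}+r_Z$ via the central regular sequence. The only cosmetic differences are that the paper names the test module for sharpness explicitly as $\cH^\zeta$ (invoking the Frobenius property of $\cH_{\cF}^{\dagger,\zeta}$ from Proposition~5.4 of \cite{ollivierschneider} and the analog of their Lemma~6.10 rather than Corollary~6.12), and it packages your Koszul/spectral-sequence step into a single citation of Theorem~7.3.5(i) of \cite{mcconnellrobson}.
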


Compare \cite[Cor. 2.3]{opdamsolleveld}.  

\begin{proof}
Assume first that $\pd_{\cH}(\mm) < \infty$.  The condition on $p$ implies $p\nmid |\Omega_{\tor}|$, so that $\mm|_{\cH_{\cF}}$ is projective over $\cH_{\cF}$ for every facet $\cF\subset \overline{C}$ (Lemma \ref{redtofacets}).  Since $\cH_{\cF}^{\dagger,\zeta}$ is a crossed product algebra over $\cH_{\cF}$, \cite[Thm. 7.5.6(ii)]{mcconnellrobson} again implies that $\mm|_{\cH_{\cF}^{\dagger,\zeta}}(\epsilon_{\cF})$ is projective over $\cH_{\cF}^{\dagger,\zeta}$ (cf. Lemmas \ref{daggerproj} and \ref{twist}).  Since $\cH^\zeta$ is free as an $\cH_{\cF}^{\dagger,\zeta}$-module, the induced $\cH^\zeta$-module $\mm|_{\cH_{\cF}^{\dagger,\zeta}}(\epsilon_{\cF})\otimes_{\cH_{\cF}^{\dagger,\zeta}}\cH^\zeta$ is projective, which gives one implication and the precise result about the resolution \eqref{resn}.

Assume conversely that $\pd_{\cH^\zeta}(\mm) < \infty$.  By an analog of Lemma \ref{redtofacets}, we get that $\mm|_{\cH_{\cF}}$ is projective over $\cH_{\cF}$ for every facet $\cF\subset\overline{C}$, and using the argument of the first paragraph shows that in fact $\pd_{\cH^{\zeta}}(\mm) \leq r_{\textnormal{ss}}$.  Also, \cite[Prop. 5.4]{ollivierschneider} shows that the algebras $\cH_{\cF}^{\dagger,\zeta}$ are Frobenius algebras, and by adapting the arguments of Section 6 of \emph{loc. cit.} we see that \eqref{resn} is a resolution of $\mm$ by (Gorenstein) projective $\cH^\zeta$-modules (using the identification \eqref{zchar}).  Furthermore, an analog of Lemma 6.10 of \emph{loc. cit.} holds, and we obtain 
$$\textnormal{Ext}^{r_{\textnormal{ss}}}_{\cH^\zeta}(\mm,\cH^\zeta)\neq 0.$$
Thus, we have $\pd_{\cH^\zeta}(\mm) = r_{\textnormal{ss}}$.

Fix now a set of generators $z_1,\ldots, z_{r_Z}$ for $Z/(Z\cap I)$; then $\T_{\widehat{z}_i} - \zeta(\T_{\widehat{z}_i})$ are central non-zerodivisors of $\cH$ which generate the (proper) ideal $\left(\T_{\widehat{z}} - \zeta(\T_{\widehat{z}})\right)_{z\in Z/(Z\cap I)}\cH$.  Applying \cite[Thm. 7.3.5(i)]{mcconnellrobson} shows that $\pd_{\cH}(\mm) = \pd_{\cH^\zeta}(\mm) + r_Z = r_{\textnormal{ss}} + r_Z$, which gives the claim.  
\end{proof}

\begin{remark}
Let $\mm$ and $\nn$ be two right $\cH$-modules on which $\cY$ acts by a character $\zeta$.  In order to obtain quantitative information about projective dimensions, one can compute the Ext groups between $\mm$ and $\nn$, either in the category of $\cH^\zeta$-modules or in the category of $\cH$-modules.  The relation between the two is controlled by the base-change-for-Ext spectral sequence:
$$E_2^{i,j} = \Ext_{\cH^\zeta}^i(\mm,\Ext_{\cH}^j(\cH^\zeta,\nn))\Longrightarrow \Ext_{\cH}^{i + j}(\mm,\nn).$$
The five-term exact sequence associated to the above spectral sequence is a module-theoretic version of a short exact sequence used by Pa\v{s}k\={u}nas in \cite[Prop. 8.1]{paskunas:exts}.
\end{remark}

\bibliographystyle{amsplain}
\bibliography{refs}

\end{document}